\newcommand{\E}{\mbox{$\mathbb{E}$}}
\newcommand{\clr}{\color{black}}
\newcommand{\hulc}{\textsc{HulC}~}
\newcommand{\hulcnospace}{\textsc{HulC}}
\newcommand{\ahulc}{\textsc{Adaptive HulC}~}
\newcommand{\ahulcnospace}{\textsc{Adaptive HulC}}
\newcommand{\uhulc}{\textsc{Unimodal HulC}~}
\newcommand{\uhulcnospace}{\textsc{Unimodal HulC}}
\newtheorem{thm}{Theorem}
\newtheorem{lem}{Lemma}
\newtheorem{prop}{Proposition}
\newcounter{myalgctr}
\newenvironment{rem}{
   \vskip1mm\indent
   \refstepcounter{myalgctr}
   \textbf{Remark \themyalgctr}
   }{\hfill$\diamond$\par}  
\numberwithin{myalgctr}{section}
\DeclareMathOperator*{\argmin}{arg\,min}
\DeclareRobustCommand{\rchi}{{\mathpalette\irchi\relax}}
\newcommand{\irchi}[2]{\raisebox{\depth}{$#1\chi$}}
\title{\bf The \hulcnospace: Confidence Regions from Convex Hulls}
\author{Arun Kumar Kuchibhotla}
\author{Sivaraman Balakrishnan}
\author{Larry Wasserman}
\affil{\texttt{\{arunku,\,siva,\,larry\}@stat.cmu.edu}}
\affil{Department of Statistics \& Data Science, Carnegie Mellon University,\\ 5000 Forbes Ave, Pittsburgh, PA 15213, USA.}
\date{}
\begin{document}

\maketitle

\begin{abstract}
We develop and analyze the \hulcnospace, an intuitive and general method for constructing confidence sets using the convex hull 
of estimates constructed from subsets of the data. 
{\color{black}We present this method in the context of independent data.}
Unlike classical methods which are based on estimating the (limiting) distribution of an estimator, the \hulc is often 
simpler to use and effectively bypasses this step.
In comparison to the bootstrap, the \hulc requires fewer regularity conditions and succeeds in many examples where the bootstrap 
provably fails. Unlike subsampling, the \hulc does not require knowledge of the rate of convergence of the estimators on which it is based. 
The validity of the \hulc requires knowledge of the (asymptotic) median-bias of the estimators. We further analyze 
a variant of our basic method, 
called the \ahulcnospace, which is fully data-driven and estimates the median-bias using subsampling. We show that the \ahulc retains the aforementioned
strengths of the \hulcnospace. In certain cases where the underlying estimators are pathologically asymmetric the \hulc and \ahulc can fail to provide useful confidence sets. 
We propose a final variant, the \uhulcnospace, which can salvage the situation in cases where the distribution of the underlying estimator is (asymptotically) unimodal. 
We discuss these methods in the context of several challenging inferential problems which arise
in parametric, semi-parametric, and non-parametric inference. Although our focus is on validity under weak regularity conditions, we
also provide some general results on the width of the \hulc confidence sets, showing that in many
cases the \hulc confidence sets have near-optimal~width.
\end{abstract}

\section{Introduction}\label{sec:intro}
Estimation and uncertainty quantification are two of the most fundamental aspects of statistical analysis. The theory of point estimation is very well-studied starting from the principle of maximum likelihood estimation~\citep{stigler2007epic,pfanzagl2011parametric,lehmann2006theory}. Relatively more recent frameworks of parametric efficiency~\citep[Chapters 4--8]{van2000asymptotic} and semiparametric influence functions~\citep{bickel1993efficient} provide general methods of constructing good estimators. Uncertainty quantification, for instance{\color{black}, when} testing a statistical hypothesis or constructing a confidence set, most often follows from studying the asymptotic distribution of the estimator. In many cases this approach requires \emph{estimating} the asymptotic distribution of the estimator {\color{black}(properly normalized)}. Even in favorable cases, when this asymptotic distribution is mean zero Gaussian, one needs to further estimate the asymptotic variance of the estimator
in order to construct a valid confidence set. As a consequence, in practice, methods which yield uncertainty quantification while using only a method for point estimation are often favored. 

Generic techniques to obtain uncertainty quantification that do not require any more than the estimation method are the bootstrap and subsampling~\citep{efron1979bootstrap,politis1994large,shao2012jackknife,hall2013bootstrap}. 
The bootstrap however requires that the estimator 
be Hadamard differentiable;
see~\cite{dumbgen1993nondifferentiable} and~\citet[Section 3.2]{shao2012jackknife}. 
Subsampling is more general,
but requires knowing the rate of convergence of the
estimator. \cite{bertail1999subsampling} provides a scheme to estimate
the unknown rate of convergence, but this method 
cannot estimate the
slowly varying components of the rate (such as $\log n$ factors);
see~\citet[page 3]{sherman1997omnibus} for details.

In this paper, we propose a new method,
the \hulc (Hull based Confidence)
that does not
require variance estimation and is applicable in many examples where
the bootstrap and subsampling are not. 
The \hulc does not
require knowing the rate of convergence of the estimator. 
In many cases, the \hulc does not involve any tuning
parameters. 
Besides being asymptotically valid,
the \hulc
is {\em eventually finite sample} 
meaning that the coverage is exact
for all samples of size $n\geq n_0$
for some finite $n_0$. {\color{black}Throughout the paper, we restrict ourselves to independent data.
Although applicable for dependent data, \hulc for dependent data is beyond the scope of the current paper and will be dealt with elsewhere.}

The basis for the \hulc is an assumption that
the estimators on which it is based are not
pathologically asymmetric: their distributions do not place all their
mass to one side of the target parameter.
We measure the asymmetry in terms of the median bias of
the estimator. This makes the method widely
applicable and easy to use. 
Our method has some similarity to the typical values
approach of~\cite{hartigan1969using,hartigan1970exact}. See, in
particular, point 5 in Section 7 of~\cite{hartigan1970exact}.
{\color{black}The work of~\citet{Ibragimov2010} also uses estimators computed on a fixed number of splits of the data as we do and combines them via a $t$-statistic to obtain an asymptotically valid confidence interval; also, see~\cite{lam2022cheap}. We note that, unlike ours, this approach relies heavily on the asymptotic normality of a properly normalized estimate.}

Mean unbiasedness is a popular criterion for good estimators and mean bias reduction is well-studied in the statistics literature~\citep{firth1993bias,kosmidis2009bias,kim2016higher}. However, as noted in~\cite{pfanzagl2017optimality} the fact that an estimator is mean unbiased does not naturally aid in uncertainty quantification.
In contrast, median unbiasedness implies that the estimator is equally likely to underestimate and overestimate the target of interest. As will be shown in this article, this property can lead to a simple method for constructing confidence intervals. Median unbiasedness and median bias reduction are not as widely known as the mean unbiasedness and mean bias reduction, but we will develop their implications for inference. We refer the reader to~\cite{pfanzagl2011parametric} for details regarding median unbiased estimation and to~\cite{kenne2017median,kosmidis2020mean} for median bias reduction methods {\color{black}in parametric models}. 

Inspired by the practical success of resampling methods like the bootstrap and subsampling, the \hulc directly exploits our relatively strong understanding of point estimation to address challenging inferential problems.
As with these methods, the width of the intervals we construct is naturally related to the accuracy of the underlying estimators, i.e. the \hulc based on a very accurate estimator will lead to small confidence sets. On the other hand, in contrast to 
these methods, the \hulc uses sample-splitting to avoid strong regularity conditions, and its validity relies instead on a relatively mild assumption. This follows a line of recent work by the authors
(for instance, \cite{wasserman2020universal,chakravarti2019gaussian,rinaldo2019bootstrapping}), and more classical work by~\cite{bickel1982adaptive}, where sample-splitting eases the challenges of statistical inference, often at a surprisingly small price.

The remainder of this article is organized as follows. In Section~\ref{sec:convex-hull}, we describe our assumptions and the \hulc method for constructing confidence regions for univariate and multivariate parameters. We compare the proposed confidence interval to Wald confidence intervals based on asymptotic Normality in terms of their widths. We also compare to the bootstrap and subsampling in terms of applicability. In Section~\ref{sec:applications-parametric-models}, we discuss the applicability of the \hulc to some standard examples where limiting distributions are well-understood but constructing valid confidence sets can still be challenging; the examples we consider include mean and median estimation, Binomial proportion estimation, and parameter estimation in exponential families. Our method involves an assumption on the median bias of the estimators under consideration. In Section~\ref{sec:adaptive-convex-hull}, we describe the \ahulc which estimates the median bias using subsampling. Interestingly, in contrast to directly using subsampling for constructing a confidence set, the \ahulc does not require 
knowledge of the rate of convergence. In Section~\ref{sec:applications-nonparametric-models}, we provide some applications of the \ahulc to nonparametric models including shape constrained regression. In Section~\ref{sec:unimodality-lanke}, we provide an extension, called the \uhulcnospace, based on the assumption of unimodality. Between our {\color{black}median bias} assumption and unimodality assumption, we believe that many challenging confidence set construction problems based on independent observations are solved. {\color{black}In Section~\ref{subsec:multivariate}, we briefly discuss the application of \hulc for multivariate parameters/functionals.} Finally, in Section~\ref{sec:conclusions}, we summarize the article and discuss some future directions. 
Throughout the article, we focus on the pointwise validity (as in~\cite{politis1994large}) of our confidence region{\color{black}, where we treat the distribution of the data as fixed, as the sample size increases.
Some preliminary results on uniform validity of \hulc are presented in~\cite{kuchibhotlamedian}. 
}

The proofs of all the main results are provided in the supplementary material. {\color{black}Section~\ref{subsec:union-bound-Wald} provides a discussion on the application of Bonferroni inequality with Wald interval, supplementing HulC method for multivariate parameters in Section~\ref{subsec:multivariate}.} The sections and equations of the supplementary file are prefixed with ``S.'' and ``E.'', respectively, for convenience. We provide the code to reproduce the figures in the paper, including an implementation of our methods in R together with Jupyter notebooks illustrating their application at \url{https://github.com/Arun-Kuchibhotla/HulC}.
\section{The \hulcnospace: Hull based Confidence Regions}\label{sec:convex-hull}
In this section, we describe the \hulc and compare it to classical asymptotic normality based confidence intervals. 
We present several results for the \hulcnospace, and in order to aid readability we provide a brief roadmap here:
\begin{enumerate}
\item Focusing first on univariate parameters, in Theorem~\ref{thm:coverage-algorithm-known-Delta}, we show that when the median bias of the estimators is known to be at most $\Delta$ the \hulc (as described in Algorithm~\ref{alg:confidence-known-Delta}) has guaranteed coverage 
of at least $1 - \alpha$. We also show that, under some mild additional conditions, if the underlying estimators have median bias exactly $\Delta$ then the \hulc has coverage exactly $1 - \alpha$. 
\item In Proposition~\ref{prop:approximate-to-exact-validity}, we investigate properties of a (slightly) conservative variant of the \hulcnospace, showing that the \hulc when provided with the \emph{asymptotic} median bias still ensures finite-sample $1-\alpha$ coverage, for sufficiently large sample sizes.
This setting is practically useful because in many cases we know the limiting distribution of our estimates is normal (say) and in these cases 
the asymptotic median bias is known to be 0.

\item In Theorem~\ref{thm:infinite-order-coverage} and Remark~\ref{rem:miscoverage-Delta}, we show that the guarantees of the (non-conservative) \hulc erode gracefully, i.e. if we run the \hulc with a parameter $\Delta$ but the true median bias is at most $\widetilde{\Delta}$
then the \hulc has coverage which degrades from the nominal level (multiplicatively) as a function of $|\Delta - \widetilde{\Delta}|$.

\item In~\eqref{eq:width-convex-hull} and~\eqref{eq:width-simple-calculation}, we provide two simple analyses of the width of the \hulc intervals. In~\eqref{eq:width-convex-hull} we show that in the classical setting where the estimates have an asymptotic normal distribution, 
the width of the \hulc interval is the same as that of the corresponding Wald interval up to a factor of $\sqrt{\log_2(\log_2(2/\alpha))}$. In~\eqref{eq:width-simple-calculation}, we show that under much more generality the \hulc
based on $B^*$ splits yields a variance-sensitive confidence interval whose expected width is upper bounded by $2 \sigma B^*/\sqrt{n}$, where $\sigma$ is the standard deviation of the estimators on which the \hulc is based.

\end{enumerate}

\subsection{\hulc for univariate parameters}
Suppose $\theta_0\in\mathbb{R}$ is a parameter or functional of interest. Let $X_1, \ldots, X_n$ be independent random variables from some measurable space $\mathcal{X}$. For $B\ge1$, let $\widehat{\theta}_1, \ldots, \widehat{\theta}_B$ be independent estimators of $\theta_0$. These can be obtained by splitting the data $X_1, \ldots, X_n$ into $B$ batches and computing an estimate from each batch.
{\color{black}Formally, let $S_1, \ldots, S_B$ be a (random) partition of $\{1, 2, \ldots, n\}$ into $B$ subsets. For $1\le j\le B$, let $\widehat{\theta}_j$ be the estimator computed on $j$-th batch of observations $\{X_k:\, k\in S_j\}$.}
Define the median bias of the estimator $\widehat{\theta}_j$ for $\theta_0$ as
\begin{equation}\label{definition:median-bias}
\mbox{Med-Bias}_{\theta_0}(\widehat{\theta}_j) ~:=~ \left(\frac{1}{2} - \min\left\{\mathbb{P}(\widehat{\theta}_j - \theta_0 \ge 0), \mathbb{P}(\widehat{\theta}_j - \theta_0 \le 0)\right\}\right)_+,
\end{equation}
where $(x)_+ = \max\{x, 0\}$ for any $x\in\mathbb{R}$. {\color{black}From the definition, it is clear that $\mbox{Med-Bias}_{\theta_0}(\widehat{\theta}_j)$ lies in $[0, 1/2]$.}
Using the independence of the estimators $\widehat{\theta}_j, 1\le j\le B$, we obtain the following result (proved in Section~\ref{appsec:fixed-B-coverage-result} of the supplementary material). 
\begin{lem}\label{lem:fixed-B-coverage-result}
If $\widehat{\theta}_j, 1\le j\le B$ are independent random variables and
\begin{equation}\label{eq:delta-definition-main-lemma}
\Delta ~:=~ \max_{1\le j\le B}\,\mathrm{Med\mbox{-}Bias}_{\theta_0}(\widehat{\theta}_j){\color{black}~\in~[0, 1/2]},
\end{equation}
then
\[
\mathbb{P}\left(\theta_0 \notin \left[\min_{1\le j\le B}\widehat{\theta}_j,\,\max_{1\le j\le B}\widehat{\theta}_j\right]\right) \le \left(\frac{1}{2} - \Delta\right)^B + \left(\frac{1}{2} + \Delta\right)^B.
\]
\end{lem}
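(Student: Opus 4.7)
The plan is to express the miscoverage event as the disjoint union of two one-sided events, use independence to factor each probability, invoke the median-bias hypothesis to control the factors, and then carry out an optimization step that exploits the relation $p_j + q_j \le 1$ to get the stated bound.

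First, I would observe that
\begin{equation*}
\left\{\theta_0 \notin \left[\min_j \widehat{\theta}_j, \max_j \widehat{\theta}_j\right]\right\} = \left\{\widehat{\theta}_j > \theta_0 \text{ for all } j\right\} \cup \left\{\widehat{\theta}_j < \theta_0 \text{ for all } j\right\},
\end{equation*}
which is a disjoint union, so by independence of the $\widehat{\theta}_j$'s its probability equals $\prod_j p_j + \prod_j q_j$, where $p_j := \mathbb{P}(\widehat{\theta}_j > \theta_0)$ and $q_j := \mathbb{P}(\widehat{\theta}_j < \theta_0)$. Applying $\mathrm{Med\mbox{-}Bias}_{\theta_0}(\widehat{\theta}_j) \le \Delta$ to each of the two probabilities inside the minimum in~\eqref{definition:median-bias} yields $p_j \le 1/2 + \Delta$ and $q_j \le 1/2 + \Delta$, and disjointness of the events $\{\widehat{\theta}_j > \theta_0\}$ and $\{\widehat{\theta}_j < \theta_0\}$ supplies the extra constraint $p_j + q_j \le 1$.

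The substantive step is then to show $\prod_j p_j + \prod_j q_j \le (1/2 + \Delta)^B + (1/2 - \Delta)^B$ under these three constraints; the crude bound $2(1/2 + \Delta)^B$ is too weak, so the linkage $p_j + q_j \le 1$ has to be used. I would argue that, without loss of generality, each $p_j \in [1/2 - \Delta, 1/2 + \Delta]$ and $q_j = 1 - p_j$: starting from any feasible configuration, enlarging each $p_j$ up to $\min(1/2 + \Delta,\, 1 - q_j)$ and then each $q_j$ up to $\min(1/2 + \Delta,\, 1 - p_j)$ only weakly increases the objective and lands in this canonical form. The reduced objective $\prod_j p_j + \prod_j (1 - p_j)$ is affine in each coordinate, so its maximum over $[1/2 - \Delta, 1/2 + \Delta]^B$ is attained at a vertex with $p_j \in \{1/2 - \Delta, 1/2 + \Delta\}$. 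Writing $a = 1/2 + \Delta$, $b = 1/2 - \Delta$ and letting $k$ count the coordinates at which $p_j = a$, the vertex value equals $a^k b^{B-k} + a^{B-k} b^k$, and the identity
\begin{equation*}
a^B + b^B - \bigl(a^k b^{B-k} + a^{B-k} b^k\bigr) = (a^k - b^k)(a^{B-k} - b^{B-k})\ \ge\ 0
\end{equation*}
(nonnegative since $a \ge b \ge 0$) shows this vertex value is bounded by $a^B + b^B$, attained at $k \in \{0, B\}$. The main obstacle I anticipate is making the reduction to the canonical form $q_j = 1 - p_j$ watertight against edge cases (for instance when some $p_j = 0$ or $\Delta = 1/2$); once that is in place, multilinearity together with the displayed identity close the proof in a single line.
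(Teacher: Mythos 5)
Your proof is correct and follows essentially the same route as the paper: decompose the miscoverage event into the two disjoint one-sided events, factor by independence, and then bound $\prod_j p_j + \prod_j q_j$ using $p_j, q_j \le 1/2+\Delta$ together with $p_j + q_j \le 1$. The only difference is that the paper states the final step tersely (``the result now follows from the definition of $\Delta$'' plus the remark that $\mathbb{P}(\widehat{\theta}_j<\theta_0)+\mathbb{P}(\widehat{\theta}_j>\theta_0)\le 1$), whereas you make the needed optimization explicit via the reduction to $q_j=1-p_j$, multilinearity, and the vertex identity $(a^k-b^k)(a^{B-k}-b^{B-k})\ge 0$ --- a careful and valid filling-in of the gap.
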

{\color{black}Observe that $\Delta$ in general depends on $\theta_0$, the distribution of the data $X_1, \ldots, X_n$ as well as the batch sizes $|S_1|$, \ldots, $|S_B|$. For simplicity, we do not index $\Delta$ with these quantities. Lemma~\ref{lem:fixed-B-coverage-result} provides a two-sided confidence interval with a bound on the miscoverage probability. It is easy to also obtain one-sided confidence intervals with explicit bounds on miscoverage. For instance, if there exists a $\delta\in[-1/2, 1/2]$ such that $\mathbb{P}(\widehat{\theta}_j \ge \theta_0) \ge 1/2 + \delta$ for all $1\le j\le B$, then $\mathbb{P}(\theta_0 \le \max_{1\le j\le B}\widehat{\theta}_j) \ge 1 - (1/2 - \delta)^B$.}

An estimator $\widehat{\theta}$ is said to {\bf median unbiased} for $\theta_0$ if
$\mbox{Med-Bias}_{\theta_0}(\widehat{\theta}) = 0$
\citep{pfanzagl2011parametric}.
It is worth
noting that median unbiasedness does not imply that the estimator is
symmetric. 
The non-strict inequality
in the definition~\eqref{definition:median-bias} is important:
it allows for
$\mathbb{P}(\widehat{\theta}_j - \theta_0 \ge 0)$ and
$\mathbb{P}(\widehat{\theta}_j - \theta_0 \le 0)$ to be equal to $1$
or be larger than $1/2$. This is useful in cases where 
$\theta_0$ is on the boundary or $\widehat{\theta}_j$ has a discrete
distribution and puts non-zero mass at $\theta_0$. 
An estimator $\widehat\theta_n$ based on $n$ observations 
is {\bf asymptotically median unbiased} if
$\lim_{n\to\infty}\mbox{Med-Bias}_{\theta_0}(\widehat\theta_n)\to 0$.
{\color{black}One of the prominent examples of asymptotically median unbiased estimators is the class of asymptotically normal estimators; some of these are described in Section~\ref{sec:applications-parametric-models}. A simple example with non-zero asymptotic median bias is the estimation of $\theta_0 = \mu_0^2$, where $\mu_0$ is the mean of i.i.d. random variables $X_1, \ldots, X_n$. The estimator $\widehat{\theta} = \binom{n}{2}^{-1}\sum_{i\neq j}X_iX_j$ has an asymptotic median bias of $|\mathbb{P}(\rchi^2_1 \le 1) - 1/2|$ when $\mu_0 = 0$ as shown in Section~\ref{subsec:mean-square}. (Here $\rchi^2_1$ denotes the chi-square random variable with degrees of freedom 1.)}

For any $B\ge1$ and $\Delta \ge 0$, set the {\color{black}upper bound on the} miscoverage probability from Lemma~\ref{lem:fixed-B-coverage-result} as
\begin{equation}\label{eq:miscoverage-function-B-Delta}
P(B; \Delta) ~:=~ \left(\frac{1}{2} - \Delta\right)^B + \left(\frac{1}{2} + \Delta\right)^B.
\end{equation} 
If $\Delta \ge 0$ is known, then choosing $B := B_{\alpha,\Delta}\ge1$ such that $P(B; \Delta) \le \alpha$, we conclude that
\[
\mathbb{P}\left(\theta_0 \notin \left[\min_{1\le j\le B}\widehat{\theta}_j,\,\max_{1\le j\le B}\widehat{\theta}_j\right]\right) \le \alpha.
\]
In words, the smallest rectangle (interval) containing $B_{\alpha,\Delta}$ independent estimators of $\theta_0$ has a coverage of at least $1 - \alpha$\footnote{{\color{black}Throughout this paper we use the phrase ``asymptotically valid'' (or ``valid'') to indicate that the coverage is asymptotically (or finite-sample) \emph{at least} $1 - \alpha$. When the coverage is exactly $1 - \alpha$ we indicate this by the phrase ``asymptotically exact'' (or simply ``exact'' in the finite-sample setting).}}. 
{\color{black}The smallest interval containing $B_{\alpha,\Delta}$ estimators is their (convex) hull and hence we call this interval \hulc (Hull based Confidence) interval.}
Because $B$ is an
integer, $P(B; \Delta)$ decreases in steps as $B$ changes over
positive integers and this can lead to conservative coverage {\color{black}i.e., miscoverage probability strictly less than $\alpha$ as there may not exist an integer $B$ such that $P(B; \Delta) = \alpha$}.  
This issue can be resolved easily by randomizing the choice of $B$. 
{\color{black}
Formally, we generate a random variable $U$ from the uniform distribution on $[0,1]$ and set
\begin{equation}\label{eq:definition-tau}
\tau_{\alpha,\Delta} := \frac{\alpha - P(B_{\alpha,\Delta}; \Delta)}{P(B_{\alpha,\Delta} - 1; \Delta) - P(B_{\alpha,\Delta};\Delta)}\quad\mbox{and}\quad B^* := \begin{cases} B_{\alpha,\Delta} - 1, &\mbox{if }U \le \tau_{\alpha,\Delta},\\
B_{\alpha,\Delta}, &\mbox{if }U > \tau_{\alpha,\Delta}.
\end{cases} 
\end{equation}
If $\tau_{\alpha,\Delta} = 0$, then $B^* = B_{\alpha,\Delta}$.
}
Most often $\Delta$ is unknown.  
This issue will be resolved 
in Section~\ref{sec:adaptive-convex-hull}
where we show how to estimate $\Delta$.

Algorithm~\ref{alg:confidence-known-Delta} gives the steps to find a randomized confidence interval with $1 - \alpha$ coverage when the median bias $\Delta$ is known.
\begin{algorithm}[h]
    \caption{Confidence Interval with Known $\Delta$ (\hulcnospace)}
    \label{alg:confidence-known-Delta}
    \SetAlgoLined
    \SetEndCharOfAlgoLine{}
    \KwIn{data $X_1, \ldots, X_n$, coverage probability $1 - \alpha$, a value $\Delta$, and an estimation procedure $\mathcal{A}(\cdot)$ that takes as input observations and returns an estimator with a median bias of at most $\Delta$.}
    \KwOut{A confidence interval $\widehat{\mathrm{CI}}_{\alpha,\Delta}$ such that $\mathbb{P}(\theta_0\in\widehat{\mathrm{CI}}_{\alpha,\Delta}) \ge 1 - \alpha$.}
    Find the smallest integer $B = B_{\alpha,\Delta}\ge1$ such that $P(B;\Delta) \le \alpha$. Recall $P(B; \Delta)$ from~\eqref{eq:miscoverage-function-B-Delta}.\;
    Fix $B^*$ as defined in~\eqref{eq:definition-tau}.\;
	Randomly split the data $X_1, \ldots, X_n$ into $B^*$ disjoint sets $\{\{X_i:i\in S_j\}: 1\le j\le B^*\}.$ These need not be equal sized sets, but having approximately equal sizes yields good width properties.\;
	Compute estimators $\widehat{\theta}_j := \mathcal{A}(\{X_i:\,i\in S_j\})$, for $1\le j\le B^*$\;
    \Return the confidence interval 
    \begin{equation}\label{eq:confidence-interval-known-Delta}
    \widehat{\mathrm{CI}}_{\alpha, \Delta} ~:=~ \left[\min_{1\le j\le B^*}\widehat{\theta}_j, \max_{1\le j\le B^*}\widehat{\theta}_j\right].
    \end{equation}
\end{algorithm}

There are no restrictions on the input $\mathcal{A}(\cdot)$ in Algorithm~\ref{alg:confidence-known-Delta} except that it produces an estimate with median bias bounded by $\Delta$. Its rate of convergence and variance play a role only in the width properties of the resulting confidence interval, not in the validity guarantee. A better estimator will lead to a smaller confidence interval. Here are two examples of the estimation procedure $\mathcal{A}(\cdot)$:
\begin{itemize}[leftmargin=0.2in]
	\item If $X_1, \ldots, X_n$ are identically distributed and $\theta_0 = \mathbb{E}[X_1]$, then one can take
	$\widehat{\theta}_j = \mathcal{A}(\{X_i:\,i\in S_j\}) = {|S_j|^{-1}}\sum_{i\in S_j}X_i.$
	In general, the median bias of the sample mean is unknown, but typically tends to zero as $|S_j|\to\infty$.
	If the observations are symmetrically distributed around $\theta_0$, then $\mbox{Med-Bias}_{\theta_0}(\widehat{\theta}_j) = 0.$
	\item If $X_1, \ldots, X_n$ are random variables generated from a parametric model $p_{\theta_0}$ that belongs to the parametric family $\{p_{\theta}:\,\theta\in\Theta\}$, then one can take
	$\widehat{\theta}_j = \mathcal{A}(\{X_i:\,i\in S_j\})$
	as the maximum likelihood estimator (MLE) of $\theta_0$ based on the observations $X_i, i\in S_j$. 
	Under standard regularity conditions, MLE has an asymptotic normal distribution and hence the median bias of $\widehat{\theta}_j$ converges to zero. 
\end{itemize} 
The following result (proved in Section~\ref{appsec:coverage-algorithm-known-Delta} of the supplementary material) establishes that the confidence interval from Algorithm~\ref{alg:confidence-known-Delta} has a coverage of at least $1 - \alpha$.
\begin{thm}\label{thm:coverage-algorithm-known-Delta}
If $X_1, \ldots, X_n$ are independent random variables and the estimation procedure $\mathcal{A}(\cdot)$ in Algorithm~\ref{alg:confidence-known-Delta} returns estimates that have a median bias of at most $\Delta$, then the confidence interval $\widehat{\mathrm{CI}}_{\alpha,\Delta}$ returned by Algorithm~\ref{alg:confidence-known-Delta} satisfies
\begin{equation}\label{eq:over-coverage-in-general}
\mathbb{P}\left(\theta_0 \in \widehat{\mathrm{CI}}_{\alpha,\Delta}\right) \ge 1 - \alpha.
\end{equation}
Further, if $\mathbb{P}(\widehat{\theta}_j \le \theta_0) = \mathbb{P}(\widehat{\theta}_1 \le \theta_0)$, $\mathbb{P}(\widehat{\theta}_j = \theta_0) = 0$ for all $j$, and the estimation procedure $\mathcal{A}(\cdot)$ in Algorithm~\ref{alg:confidence-known-Delta} returns estimates that have a median bias of exactly $\Delta$, then
\begin{equation}\label{eq:exact-coverage-same-median-bias}
\mathbb{P}\left(\theta_0 \in \widehat{\mathrm{CI}}_{\alpha,\Delta}\right) = 1 - \alpha.
\end{equation}
\end{thm}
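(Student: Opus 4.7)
The plan is to leverage Lemma~\ref{lem:fixed-B-coverage-result} directly on each possible value of $B^*$, and show that the randomization in step 2 of Algorithm~\ref{alg:confidence-known-Delta} is engineered as a linear interpolation that lands exactly at $\alpha$. Since Lemma~\ref{lem:fixed-B-coverage-result} applies only to a deterministic number of independent estimators, the first reduction is to condition on the randomized split size $B^*$; the independence of the auxiliary $U \sim \mathrm{Unif}[0,1]$ from the data is what makes this conditioning harmless.

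More concretely, I would write
\[
\mathbb{P}\bigl(\theta_0 \notin \widehat{\mathrm{CI}}_{\alpha,\Delta}\bigr) = \tau_{\alpha,\Delta}\,\mathbb{P}\bigl(\theta_0 \notin \widehat{\mathrm{CI}}_{\alpha,\Delta} \,\big|\, B^* = B_{\alpha,\Delta}-1\bigr) + (1-\tau_{\alpha,\Delta})\,\mathbb{P}\bigl(\theta_0 \notin \widehat{\mathrm{CI}}_{\alpha,\Delta} \,\big|\, B^* = B_{\alpha,\Delta}\bigr),
\]
and argue that, because $B^*$ is a function of $U$ alone and $U \perp (X_1,\ldots,X_n)$, conditionally on $\{B^* = b\}$ the variables $\widehat\theta_1,\ldots,\widehat\theta_b$ remain independent with marginals having median bias at most $\Delta$. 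Lemma~\ref{lem:fixed-B-coverage-result} then bounds each conditional miscoverage by $P(b;\Delta)$. Substituting $\tau_{\alpha,\Delta} = \bigl(\alpha - P(B_{\alpha,\Delta};\Delta)\bigr)/\bigl(P(B_{\alpha,\Delta}-1;\Delta) - P(B_{\alpha,\Delta};\Delta)\bigr)$ from~\eqref{eq:definition-tau} and simplifying shows that the weighted sum collapses to exactly $\alpha$, giving~\eqref{eq:over-coverage-in-general}. A small side check: by minimality of $B_{\alpha,\Delta}$ one has $P(B_{\alpha,\Delta}-1;\Delta) > \alpha \ge P(B_{\alpha,\Delta};\Delta)$, so $\tau_{\alpha,\Delta} \in [0,1)$ and the convex combination is well defined; the boundary case $B_{\alpha,\Delta} = 1$ (so $\tau_{\alpha,\Delta} = 0$ by convention) should be handled as a degenerate subcase.

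For the equality~\eqref{eq:exact-coverage-same-median-bias}, I would revisit when Lemma~\ref{lem:fixed-B-coverage-result} is tight. When $\mathbb{P}(\widehat\theta_j = \theta_0) = 0$, we have $\mathbb{P}(\widehat\theta_j > \theta_0) + \mathbb{P}(\widehat\theta_j < \theta_0) = 1$, and the exact-median-bias hypothesis forces $\mathbb{P}(\widehat\theta_j > \theta_0) \in \{1/2 - \Delta, 1/2 + \Delta\}$. Since $\mathcal{A}$ is applied to symmetric splits of the data (so the marginals of $\widehat\theta_j$ are identical, or at least identically oriented), the two miscoverage events ``all $\widehat\theta_j > \theta_0$'' and ``all $\widehat\theta_j < \theta_0$'' have probabilities $(1/2+\Delta)^{B^*}$ and $(1/2-\Delta)^{B^*}$ respectively, summing to $P(B^*;\Delta)$ with equality. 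Plugging this into the law-of-total-probability expression above yields exactly $\alpha$.

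The main obstacle is not technical but one of careful bookkeeping: making the conditioning on $B^*$ rigorous (in particular preserving the independence hypothesis of Lemma~\ref{lem:fixed-B-coverage-result}), and in the equality claim being explicit about the mild symmetry needed across batches for Lemma~\ref{lem:fixed-B-coverage-result}'s bound to be attained — without such an identical-orientation condition, mixing $p_j = 1/2 + \Delta$ and $p_j = 1/2 - \Delta$ across different $j$ can strictly reduce the miscoverage below $P(B^*;\Delta)$, and the equality~\eqref{eq:exact-coverage-same-median-bias} would only hold as an inequality.
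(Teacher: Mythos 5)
Your proof follows essentially the same route as the paper's: decompose the miscoverage over the randomized $B^*$ using the independence of $U$ from the data, apply Lemma~\ref{lem:fixed-B-coverage-result} conditionally on each value of $B^*$, and observe that the definition of $\tau_{\alpha,\Delta}$ in~\eqref{eq:definition-tau} makes the resulting convex combination $\tau_{\alpha,\Delta}P(B_{\alpha,\Delta}-1;\Delta)+(1-\tau_{\alpha,\Delta})P(B_{\alpha,\Delta};\Delta)$ collapse to exactly $\alpha$. Your closing caveat about~\eqref{eq:exact-coverage-same-median-bias} is well taken and goes slightly beyond the paper: the paper's proof asserts without comment that exact median bias $\Delta$ for every $j$ forces $\prod_j \mathbb{P}(\widehat\theta_j>\theta_0)+\prod_j \mathbb{P}(\widehat\theta_j<\theta_0)=(1/2+\Delta)^B+(1/2-\Delta)^B$, which indeed requires all the $\mathbb{P}(\widehat\theta_j>\theta_0)$ to take the same value in $\{1/2-\Delta,1/2+\Delta\}$; with mixed orientations (e.g.\ $B=2$, $p_1=0.6$, $p_2=0.4$ gives $0.48<0.52$) one only obtains an inequality. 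In the intended setting of identically distributed data and equal split sizes the $\widehat\theta_j$ are identically distributed so the issue does not arise, but your flag is a legitimate sharpening of the implicit hypothesis.
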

In Algorithm~\ref{alg:confidence-known-Delta}, it is implicitly assumed that $B^*$ defined in step 2 is smaller than the sample size $n$ so that the estimation procedure can be applied on $B^*$ splits of the data. Recall $P(B; \Delta)$ from~\eqref{eq:miscoverage-function-B-Delta} and that $B_{\alpha,\Delta}$ is the smallest integer such that $P(B; \Delta) \le \alpha.$ It is easy to prove that $P(B; \Delta)$ is an increasing function of $\Delta\in[0, 1/2]$ and hence we obtain that 
$\max\{(1/2 + \Delta)^{B},2^{-B + 1} \}\le P(B; \Delta) \le 2(1/2 + \Delta)^{B}$.
Therefore, $B_{\alpha,\Delta}$ satisfies
\begin{equation}\label{eq:inequalities-B-alpha-Delta}
\max\left\{\left\lceil\frac{\log(1/\alpha)}{\log(2/(1 + 2\Delta))}\right\rceil, \left\lceil\frac{\log(2/\alpha)}{\log(2)}\right\rceil \right\}~\le~ B_{\alpha,\Delta} ~\le~ \left\lceil\frac{\log(2/\alpha)}{\log(2/(1 + 2\Delta))}\right\rceil.
\end{equation}
Here $\lceil x\rceil$, for any real $x$, denotes the smallest integer larger than $x$.
It is easy to verify that,
$B_{\alpha,\Delta}\to\infty$ as $\Delta\to0.5$. 
Figure~\ref{fig:B-alpha-delta} shows the plot of $B_{\alpha,\Delta}$ as $\Delta$ varies from $0$ to $0.4$ and $\alpha$ varies between $0.05, 0.1, 0.15$. {\color{black}The right panel of Figure~\ref{fig:B-alpha-delta} shows the values of $B_{\alpha,\Delta}$ for some choices of $\alpha$ and $\Delta$. In the favourable case of $\Delta = 0$, and for the usual choices of $\alpha = 0.05, 0.1$, the number of independent splits required is about 5, a feasible choice for any reasonable sample size.}
\begin{figure}[!h]
  \begin{minipage}{0.70\linewidth}
    \centering
\includegraphics[width=\textwidth,height=2.7in]{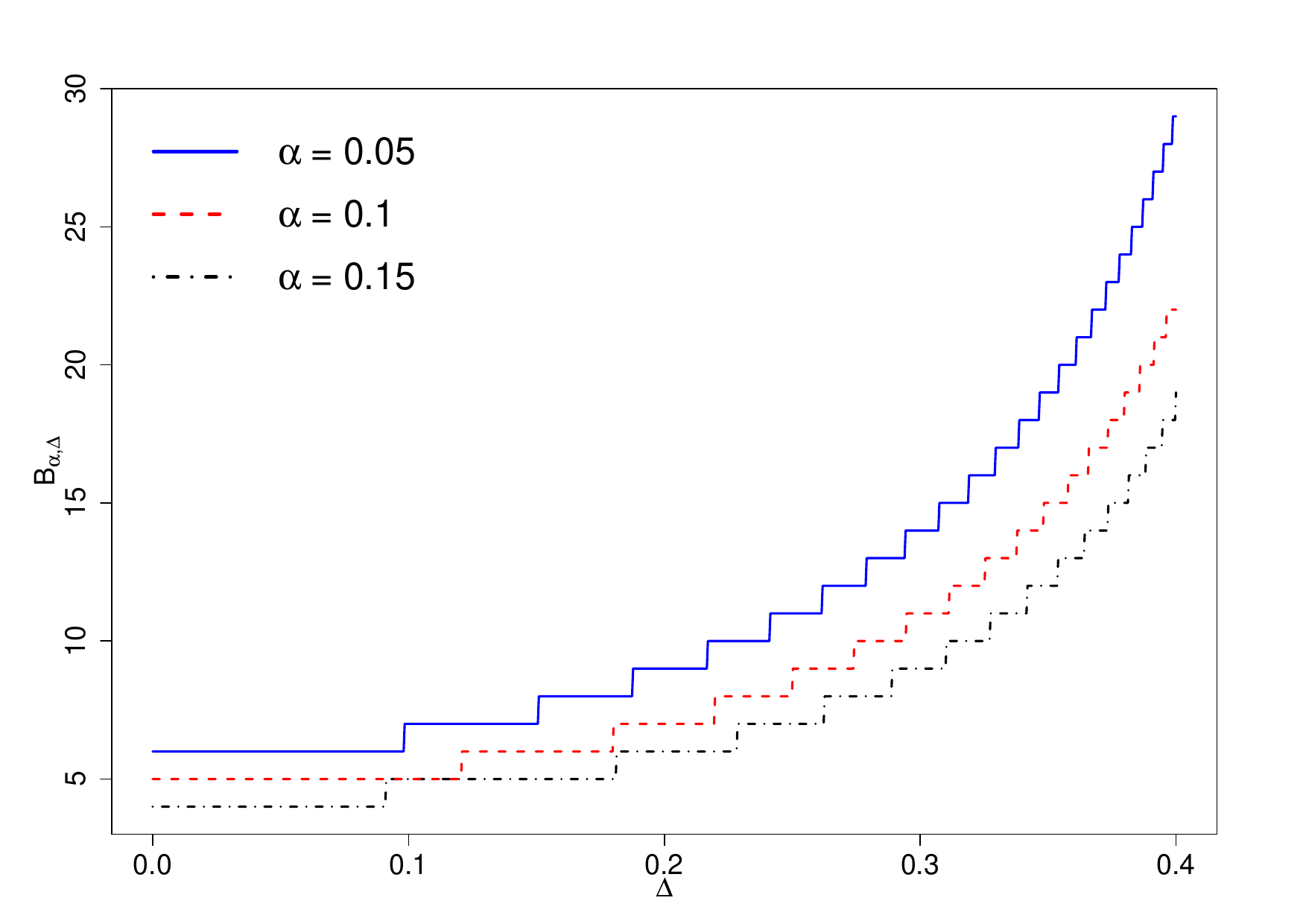}      \par\vspace{0pt}

  \end{minipage}%
  \begin{minipage}{0.30\linewidth}
    \centering
\begin{tabular}{r|rrr}
  \hline
\diagbox[height=2em]{$\Delta$}{$\alpha$} & 0.15 & 0.1 & 0.05 \\ 
  \hline\hline
0 & 4 & 5 & 6 \\ 
  0.05 & 4 & 5 & 6 \\ 
  0.1 & 5 & 5 & 7 \\ 
  0.15 & 5 & 6 & 7 \\ 
  0.2 & 6 & 7 & 9 \\ 
  0.25 & 7 & 9 & 11 \\ 
  0.3 & 9 & 11 & 14 \\ 
  0.35 & 12 & 15 & 19 \\ 
  0.4 & 19 & 22 & 29 \\ 
   \hline
\end{tabular}

    \par\vspace{0pt}
\end{minipage}
\caption{Some example values of $B_{\alpha,\Delta}$ for different values of $\alpha\in\{0.05, 0.1, 0.15\}$ and $\Delta\in[0, 0.4]$. Left panel: the plot as $\Delta$ changes continuously. Right panel: values of $B_{\alpha,\Delta}$ as $\Delta$ changes from $0.0$ to $0.4$ in increments of $0.05$. }
\label{fig:B-alpha-delta}
\end{figure}
\subsection[Confidence regions with known asymptotic median bias]{\hulc when asymptotic median bias is known}
Algorithm~\ref{alg:confidence-known-Delta} requires knowledge of the median bias of the estimators. In some settings, estimation procedures can be constructed so as to ensure median unbiasedness (i.e., $\Delta = 0$). When $\Delta = 0$, $B_{\alpha,0} = \lceil\log_2(2/\alpha)\rceil$. These examples are discussed in Section~\ref{sec:applications-parametric-models}. 

Because $B_{\alpha,\Delta}$ is a piecewise constant function in $\Delta$, we do not need to know $\Delta$ exactly. 
This observation implies that for estimators that are asymptotically symmetric around $\theta_0$, one can take $\Delta$ to be zero in Algorithm~\ref{alg:confidence-known-Delta} and still retain (asymptotic) validity. Formally, if $\mbox{Med-Bias}_{\theta_0}(\widehat{\theta}_j) \to 0$ as $|S_j|\to\infty$, then the convex hull of $B_{\alpha,0} = \lceil\log_2(2/\alpha)\rceil$ estimators has an asymptotic coverage of at least $1 - \alpha$. Furthermore, the convex hull is \emph{eventually finite sample} valid, meaning that there is a sample size $n_0$ such that the coverage is at least $1 - \alpha$ for all $n\ge n_0$. Now, we provide more details. 

Proposition~\ref{prop:approximate-to-exact-validity} proved in Section~\ref{appsec:approximate-to-exact-validity} of the supplementary material formally establishes that $B_{\alpha,\Delta}$ is a piecewise constant function of $\Delta$ (as illustrated in 
Figure~\ref{fig:B-alpha-delta}).
\begin{prop}\label{prop:approximate-to-exact-validity}
For $\widetilde{\Delta}, \Delta\in[0, 1/2)$ and $\alpha\in(0, 1)$, if
\begin{equation}\label{eq:consistency-subsampling}
2B_{\alpha,\Delta}|\Delta - \widetilde{\Delta}| ~\le~ B_{\alpha,\Delta}\left[\min\left\{\left(\frac{\alpha}{P(B_{\alpha,\Delta};\Delta)}\right)^{1/B_{\alpha,\Delta}},\,\left(\frac{P(B_{\alpha,\Delta} - 1; \Delta)}{\alpha}\right)^{1/B_{\alpha,\Delta}}\right\} - 1\right],
\end{equation}
then $B_{\alpha, \widetilde{\Delta}} = B_{\alpha, \Delta}.$ Moreover, if
\begin{equation}\label{eq:consistency-subsampling-asymptotic-median-unbiased}
2B_{\alpha,0}(B_{\alpha,0} - 1)\widetilde{\Delta}^2(1 + 2\widetilde{\Delta})^{(B_{\alpha,0} - 2)_+}
~\le~ \frac{\alpha}{P(B_{\alpha,0}; 0)} - 1.
\end{equation}
then $B_{\alpha,\widetilde{\Delta}} = B_{\alpha,0}$.
\end{prop}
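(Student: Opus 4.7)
The plan is to translate the claim $B_{\alpha,\widetilde{\Delta}} = B_{\alpha,\Delta}$ into two explicit inequalities for $P$ and then establish each via a perturbation bound on $P(B;\cdot)$ in $\Delta$. Unwinding the definition $B_{\alpha,\Delta} = \min\{B \ge 1 : P(B;\Delta) \le \alpha\}$, the equality $B_{\alpha,\widetilde{\Delta}} = B_{\alpha,\Delta}$ is equivalent to the pair
\begin{equation*}
P(B_{\alpha,\Delta};\widetilde{\Delta}) \le \alpha
\quad\text{and}\quad
P(B_{\alpha,\Delta} - 1;\widetilde{\Delta}) > \alpha,
\end{equation*}
so the entire task reduces to quantifying how $P(B;\cdot)$ can drift between $\Delta$ and $\widetilde{\Delta}$.

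For the first statement, my key lemma is the two-sided multiplicative perturbation bound
\begin{equation*}
(1 + 2|\Delta - \widetilde{\Delta}|)^{-B}\,P(B;\Delta) \;\le\; P(B;\widetilde{\Delta}) \;\le\; (1 + 2|\Delta - \widetilde{\Delta}|)^B\,P(B;\Delta),
\end{equation*}
valid for every $B \ge 1$ and $\Delta, \widetilde{\Delta} \in [0, 1/2)$. I would prove it by factoring $(1/2 \pm \widetilde{\Delta}) = (1/2 \pm \Delta)\bigl(1 \pm (\widetilde{\Delta} - \Delta)/(1/2 \pm \Delta)\bigr)$, bounding $1/(1/2 \pm \Delta) \le 2$ on $[0, 1/2)$, and keeping the summand whose sign moves toward zero as a trivial upper bound. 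The displayed hypothesis is equivalent to
\begin{equation*}
(1 + 2|\Delta - \widetilde{\Delta}|)^{B_{\alpha,\Delta}} \;\le\; \min\left\{\frac{\alpha}{P(B_{\alpha,\Delta};\Delta)},\;\frac{P(B_{\alpha,\Delta} - 1;\Delta)}{\alpha}\right\},
\end{equation*}
and feeding its two halves into the perturbation bound (applied at $B_{\alpha,\Delta}$ and $B_{\alpha,\Delta} - 1$) yields $P(B_{\alpha,\Delta};\widetilde{\Delta}) \le \alpha$ and $P(B_{\alpha,\Delta} - 1;\widetilde{\Delta}) \ge \alpha(1 + 2|\Delta - \widetilde{\Delta}|)$; the second is strictly greater than $\alpha$ whenever $\Delta \ne \widetilde{\Delta}$, and is trivially so otherwise.

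For the second statement, the strict lower bound on $P(B_{\alpha,0} - 1;\widetilde{\Delta})$ is automatic: since $\widetilde{\Delta} \ge 0$ and $P(B;\cdot)$ is nondecreasing on $[0, 1/2)$, monotonicity gives $P(B_{\alpha,0} - 1;\widetilde{\Delta}) \ge P(B_{\alpha,0} - 1;0) > \alpha$ by minimality of $B_{\alpha,0}$. Only $P(B_{\alpha,0};\widetilde{\Delta}) \le \alpha$ remains to verify. Here the first-part bound is too wasteful (it carries a spurious linear term in $\widetilde{\Delta}$ that is absent from the actual $P(B;\widetilde{\Delta})$), so I would switch to a second-order Taylor expansion around $\Delta = 0$: the symmetry $P(B;\Delta) = P(B;-\Delta)$ forces $\partial_\Delta P(B;0) = 0$, and $\partial_\Delta^2 P(B;\Delta) = B(B-1) P(B-2;\Delta)$ yields the integral remainder
\begin{equation*}
P(B;\widetilde{\Delta}) - P(B;0) \;=\; \int_0^{\widetilde{\Delta}}(\widetilde{\Delta} - s)\,B(B-1)\,P(B-2;s)\,ds.
\end{equation*}
After uniformly controlling $P(B-2;s)$ on $[0,\widetilde{\Delta}]$ (calibrated in terms of $P(B_{\alpha,0};0)$) and rearranging, the hypothesis $\tfrac12 B_{\alpha,0}(B_{\alpha,0} - 1)\widetilde{\Delta}^2 \le \alpha/P(B_{\alpha,0};0) - 1$ translates into $P(B_{\alpha,0};\widetilde{\Delta}) \le \alpha$.

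The main obstacle I anticipate is booking the constants sharply in the second part: the first-part multiplicative bound is genuinely loose near $\Delta = 0$ by a factor linear in $B$, so the quadratic Taylor route is forced, and the right uniform bound on $P(B-2;s)$ on the shrinking interval $[0,\widetilde{\Delta}]$ (as opposed to the crude $P(B-2;s) \le 1$) is what determines whether one lands on exactly the stated $\tfrac12 B_{\alpha,0}(B_{\alpha,0}-1)\widetilde{\Delta}^2$ form or only a weaker sufficient condition.
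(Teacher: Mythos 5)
Your proposal follows the paper's own proof essentially step for step: part~1 is reduced to the pair $P(B_{\alpha,\Delta};\widetilde{\Delta})\le\alpha<P(B_{\alpha,\Delta}-1;\widetilde{\Delta})$ and settled by the same two-sided multiplicative bound $(1+2|\Delta-\widetilde{\Delta}|)^{-B}\le P(B;\widetilde{\Delta})/P(B;\Delta)\le(1+2|\Delta-\widetilde{\Delta}|)^{B}$ (the paper derives it via $\tfrac{a+b}{c+d}\le\max\{a/c,\,b/d\}$ rather than your factorization, but the two computations are interchangeable), and part~2 uses the same monotonicity-plus-second-order-Taylor argument.

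The obstacle you flag in part~2 is real, and it is the one place where neither your sketch nor the paper's proof actually closes. The remainder you write down gives $P(B;\widetilde{\Delta})-P(B;0)=\tfrac{B(B-1)}{2}\widetilde{\Delta}^2\,P(B-2;\xi)$ for some $\xi\in[0,\widetilde{\Delta}]$, and to land on the stated hypothesis you would need $P(B-2;\xi)\le P(B;0)$; but $P(B-2;\xi)\ge P(B-2;0)=4P(B;0)$, so no uniform bound on $P(B-2;\cdot)$ over $[0,\widetilde{\Delta}]$ can deliver the constant $\tfrac{B(B-1)}{2}$. Indeed, expanding the binomials shows $P(B;\widetilde{\Delta})/P(B;0)=\tfrac{1}{2}\bigl[(1+2\widetilde{\Delta})^B+(1-2\widetilde{\Delta})^B\bigr]\ge 1+2B(B-1)\widetilde{\Delta}^2$, so the ratio bound $1+\tfrac{B(B-1)}{2}\widetilde{\Delta}^2$ asserted in the paper's display~\eqref{eq:Delta-zero-inequality} is not correct for $\widetilde{\Delta}>0$; the paper reaches it by treating the Lagrange remainder of $P(B;\cdot)$ as if it were the remainder of the normalized ratio, i.e., by omitting the division by $P(B;0)=2^{1-B}$ before bounding $P(B-2;\xi)\le1$. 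Executing your route honestly, with $P(B-2;\xi)\le P(B-2;\widetilde{\Delta})$, yields the second claim under the condition $\tfrac{B_{\alpha,0}(B_{\alpha,0}-1)}{2}\widetilde{\Delta}^2\,P(B_{\alpha,0}-2;\widetilde{\Delta})\le \alpha-P(B_{\alpha,0};0)$, which for small $\widetilde{\Delta}$ is $2B_{\alpha,0}(B_{\alpha,0}-1)\widetilde{\Delta}^2\lesssim \alpha/P(B_{\alpha,0};0)-1$ --- the same qualitative statement as~\eqref{eq:consistency-subsampling-asymptotic-median-unbiased} but with a constant roughly four times larger. So your anticipated difficulty is not a defect of your plan: the stated constant is unattainable by this (or, it appears, any) argument, and the hypothesis of the second part should be read as correct only up to that constant factor.
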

\begin{rem}
Note that the right hand side of~\eqref{eq:consistency-subsampling} is non-zero if and only if $\tau_{\alpha,\Delta} \neq 0$ in~\eqref{eq:definition-tau}. In a typical application, one would take $\Delta$ as the hypothesized (or asymptotic) value of the median bias and $\widetilde{\Delta}$ is the true median bias. Hence, the right hand side of~\eqref{eq:consistency-subsampling} can be computed exactly for any user choice of $\alpha\in(0, 1)$. 
As a practical matter, the user can change $\alpha$ by a tiny amount to increase the right hand side of~\eqref{eq:consistency-subsampling}.
In the most common setting of asymptotic normality, $\Delta = 0$, and consequently the requirement becomes more relaxed as in~\eqref{eq:consistency-subsampling-asymptotic-median-unbiased}; this relaxation stems from the fact that $\Delta\mapsto P(B; \Delta)$ has zero first derivative at $\Delta = 0$. 
{\color{black}Using the definition of Lambert function, the fact that $(1 + 2\widetilde{\Delta})^{(B - 2)_+} \le e^{2(B - 2)_+\widetilde{\Delta}}$, and that $B_{\alpha,0} - 2 \le B_{\alpha,0} - 1 \le B_{\alpha,0}$, the requirement~\eqref{eq:consistency-subsampling-asymptotic-median-unbiased} can be shown to be implied by
\begin{equation}\label{eq:relaxed-requirement-median-unbiased}
B_{\alpha,0}\widetilde{\Delta} ~\le~ W_0\left(\frac{1}{\sqrt{2}}\sqrt{\frac{\alpha}{P(B_{\alpha,0}; 0)} - 1}\right),
\end{equation}
where $W_0(\cdot)$ represents the principal branch of the Lambert function.}
{\color{black}Figures~\ref{fig:right-hand-side-of-consistency-subsampling-scaled} and~\ref{fig:right-hand-side-of-consistency-subsampling-median-unbiased} show the behavior of the right hand sides of~\eqref{eq:consistency-subsampling} and~\eqref{eq:relaxed-requirement-median-unbiased} for various values of $\alpha$ and $\Delta$.}
\end{rem}

Recall from the calculation surrounding~\eqref{eq:miscoverage-function-B-Delta} that the smallest interval containing $B_{\alpha,\widetilde{\Delta}}$ estimators has a coverage of at least $1 - \alpha,$ if the estimators have a median bias of at most $\widetilde{\Delta}$. Proposition~\ref{prop:approximate-to-exact-validity} implies that one need not know the median bias $\widetilde{\Delta}$ of the estimators exactly in order to find $B_{\alpha,\widetilde{\Delta}}$. Suppose the estimators $\widehat{\theta}_j, j\ge1$ have a known asymptotic median bias of $\Delta$. 
Recall $\tau_{\alpha,\Delta}$ defined in~\eqref{eq:definition-tau}. Proposition~\ref{prop:approximate-to-exact-validity} implies that for every $\alpha\in(0, 1)$ satisfying $\tau_{\alpha,\Delta} \neq 0$ there exists $N_{\alpha}\ge1$ such that for all $n\ge N_{\alpha},$
\begin{equation}\label{eq:infinite-order-coverage}
\mathbb{P}\left(\theta_0 \in \left[\min_{1\le j\le B_{\alpha,\Delta}}\widehat{\theta}_j,\,\max_{1\le j\le B_{\alpha,\Delta}}\widehat{\theta}_j\right]\right) \ge 1 - \alpha.
\end{equation}
Inequality~\eqref{eq:infinite-order-coverage} is obvious from Lemma~\ref{lem:fixed-B-coverage-result} with $B_{\alpha,\widetilde{\Delta}}$ estimators. Proposition~\ref{prop:approximate-to-exact-validity} along with asymptotic median bias of $\Delta$ implies that $B_{\alpha,\widetilde{\Delta}} = B_{\alpha,\Delta}$ for $n\ge N_{\alpha}$. The threshold sample size $N_{\alpha}$ depends on how fast $|\widetilde{\Delta} - \Delta|$ converges to zero and how big the right hand side of~\eqref{eq:consistency-subsampling},~\eqref{eq:consistency-subsampling-asymptotic-median-unbiased} are. The coverage guarantee~\eqref{eq:infinite-order-coverage} can be compared to the coverage guarantee for Wald, bootstrap, and subsampling intervals. None of these intervals have a guarantee of at least $1 - \alpha$ coverage even for large sample sizes; the coverage only converges to $1 - \alpha$ with sample size. 

{\color{black}To understand how $N_{\alpha}$ depends on $\alpha$, we consider bounds on $|\widetilde{\Delta} - \Delta|$ which use traditional Berry--Esseen bounds.} In most cases including parametric and semiparametric models, Berry--Esseen type bounds are available that provide bounds of the form,
\begin{equation}\label{eq:BE-bound-general}
\sup_{t\in\mathbb{R}}\left|\mathbb{P}\left(\frac{|S_j|^{1/2}(\widehat{\theta}_j - \theta_0)}{\sigma} \le t\right) - \Phi(t)\right| ~\le~ \frac{\mathfrak{C}_X}{|S_j|^{1/2}},
\end{equation}
where $|S_j|$ is the number of observations in the $j$-th split of the sample based on which $\widehat{\theta}_j$ is computed. Here $\mathfrak{C}_X$ is a constant that depends on the true distribution of the data. (If $\widehat{\theta}_j$ is the sample mean, then $\mathfrak{C}_X$ can be bounded in terms of the skewness of the random variables $X_i, i\ge1$.) For results of this type, see~\cite{pfanzagl1971berry,pfanzagl1973accuracy},~\cite{bentkus1997berry,bentkus2005lyapunov}, and~\cite{pinelis2017optimal}. (In semi/non-parametric models as in~\cite{zhang2011berry} and~\cite{han2019berry}, the rate of convergence may be slower than $|S_j|^{-1/2}$.) In this case, assuming $|S_j| \asymp \sqrt{n/B_{\alpha,0}}$ (i.e., data is split approximately equally into $B^*$ many samples), we get that,
\begin{equation}\label{eq:BE-median-bias}
\widetilde{\Delta} ~\le~ \max_{1\le j\le B_{\alpha,0}}\left|\mathbb{P}(\widehat{\theta}_j - \theta_0 \le 0) - \frac{1}{2}\right| ~\le~ \mathfrak{C}_X\sqrt{\frac{B_{\alpha,0}}{n}}.
\end{equation}
Note that this conclusion requires a weaker bound than the one in~\eqref{eq:BE-bound-general} because we only care about $t = 0$ in~\eqref{eq:BE-bound-general}. For example, in case of the sample mean, if the observations are symmetric around the population mean, then $\widetilde{\Delta} = 0$ irrespective of any moment assumptions, but a general Berry--Esseen bound~\eqref{eq:BE-bound-general} need not hold true without additional moment assumptions. If we take $\mathcal{M}$ to be the set of all strictly increasing functions and $\mathcal{S}$ is the class of all continuous distributions $F$ with $F(0) = 1/2$, then $\widetilde{\Delta}$ can also be bounded as
\begin{equation}\label{eq:general-bound-median-bias}
\widetilde{\Delta} \le \max_{1\le j\le B_{\alpha,0}}\,\inf_{h\in\mathcal{M}}\,\inf_{F\in\mathcal{S}}\,\left|\mathbb{P}\left(h(\widehat{\theta}_j) - h(\theta_0) \le 0\right) - F(0)\right|.
\end{equation}
This follows from the fact that $\{\widehat{\theta}_j \le \theta_0\} = \{h(\widehat{\theta}_j) \le h(\theta_0)\}$ for all strictly increasing functions $h$. Allowing for arbitrary increasing transformations may result in better normal approximations in many cases. Classical examples include the Fisher's z-transformation for the correlation coefficient and Anscombe's arcsine transformation for Binomial random variable; see~\cite{borges1970approximation,gebhardt1969some,borges1971derivation,efron1982transformation} for some examples. Because symmetric distributions belong to $\mathcal{S}$ and the standard normal distribution belongs to it, the right hand side of~\eqref{eq:general-bound-median-bias} is always better (i.e., smaller) than the bound attained by~\eqref{eq:BE-bound-general}. Moreover, if $\widehat{\theta}_j$ has zero median, then the right hand side of~\eqref{eq:general-bound-median-bias} is zero but~\eqref{eq:BE-bound-general} can result in a constant order upper bound.

If inequality~\eqref{eq:BE-median-bias} holds true, then {\color{black}the requirement~\eqref{eq:relaxed-requirement-median-unbiased} holds if
\begin{equation}\label{eq:sample-size-requirement}
\mathfrak{C}_X\frac{B_{\alpha,0}^{3/2}}{n^{1/2}} ~\le~ W_0\left(\frac{1}{\sqrt{2}}\sqrt{\frac{\alpha}{P(B_{\alpha,0}; 0)} - 1}\right),
\end{equation}}
This equivalence~\eqref{eq:sample-size-requirement} follows from inequality~\eqref{eq:inequalities-B-alpha-Delta} for $B_{\alpha,0}$. 
{\color{black}The right hand side of~\eqref{eq:sample-size-requirement}} can be as large as $0.4$ even for small values of $\alpha$ as shown in Figure~\ref{fig:right-hand-side-of-consistency-subsampling-median-unbiased}.

By making use of an Edgeworth expansion, estimators with smaller median bias can be constructed via median bias reduction~\citep{pfanzagl1973asymptotic,kenne2017median}. \citet[Section 6]{pfanzagl1973asymptotic} provides a general recipe for constructing estimators with a median bias of $o(n^{-(s-2)/2})$ for any $s \ge 3$. \citet[Eq. (3)]{kenne2017median} yields estimates $\widehat{\theta}_j$ that satisfy $|\mathbb{P}(\widehat{\theta}_j \le \theta_0) - 1/2| = O((B_{\alpha,0}/n)^{3/2})$. In this case, $\widetilde{\Delta} \le \mathfrak{D}_XB_{\alpha,0}^{3/2}/n^{3/2}$ for some constant $\mathfrak{D}_X$ and hence, requirement~\eqref{eq:sample-size-requirement} can be relaxed to
\[
\mathfrak{D}_X\frac{B_{\alpha,0}^{5/2}}{n^{3/2}} ~\le~ W_0\left(\frac{1}{\sqrt{2}}\sqrt{\frac{\alpha}{P(B_{\alpha,0}; 0)} - 1}\right).
\]
The reduction in median bias, hence, leads to a smaller threshold sample size $N_{\alpha}$ after which our intervals are finite-sample valid.

The above argument for asymptotically median unbiased estimators implies that the smallest interval containing $B_{\alpha,0}$ many independent estimators of $\theta_0$ has a finite sample coverage of at least $1 - \alpha$ after a sample size of $N_{\alpha}$. This, however, does not imply coverage validity for the confidence interval returned by Algorithm~\ref{alg:confidence-known-Delta}. This happens because with non-zero probability Algorithm~\ref{alg:confidence-known-Delta} uses $B_{\alpha,0} - 1 < B_{\alpha,0}$ estimators. The following result proves upper and lower bounds on the miscoverage of the confidence interval returned by Algorithm~\ref{alg:confidence-known-Delta} with $\Delta$ whenever the estimation procedure $\mathcal{A}(\cdot)$ has a median bias of $\widetilde{\Delta}$ converging to $\Delta$ as sample size diverges to infinity. For simplicity, the result is stated only for asymptotically median unbiased estimators, i.e., $\Delta = 0$. See Remark~\ref{rem:miscoverage-Delta} and the proof of Theorem~\ref{thm:infinite-order-coverage} for upper and lower bounds on true coverage when Algorithm~\ref{alg:confidence-known-Delta} is applied with $\Delta$ when the estimators has a median bias of at most $\widetilde{\Delta}$.

Recall that $\widehat{\mathrm{CI}}_{\alpha,\Delta}$ is the confidence interval returned by Algorithm~\ref{alg:confidence-known-Delta} when it is applied with $\Delta$ as the median bias parameter. 
Theorem~\ref{thm:infinite-order-coverage} below is proved in Section~\ref{appsec:infinite-order-coverage}.
\begin{thm}\label{thm:infinite-order-coverage}
Suppose $X_1, \ldots, X_n$ are independent random variables. If the estimation procedure $\mathcal{A}(\cdot)$ returns estimators that have a median bias of at most $\widetilde{\Delta} \ge 0$,
then 
\begin{equation}\label{eq:some-order-coverage-upper-bound}
\mathbb{P}(\theta_0\notin\widehat{\mathrm{CI}}_{\alpha,0}) ~\le~ \alpha\left(1 + 2B_{\alpha,0}(B_{\alpha,0} - 1)\widetilde{\Delta}^2(1 + 2\widetilde{\Delta})^{(B_{\alpha,0} - 2)_+}\right)\quad\mbox{for every}\quad\alpha\in(0, 1).
\end{equation}
Furthermore, if $\mathbb{P}(\widehat{\theta}_j \le \theta_0) = \mathbb{P}(\widehat{\theta}_1 \le \theta_0)$ for all $j\ge1$ and $\widehat{\theta}_j, j\ge1$ all have the same median bias of $\widetilde{\Delta}$, then
\begin{equation}\label{eq:some-order-coverage-lower-bound}
\mathbb{P}(\theta_0 \notin \widehat{\mathrm{CI}}_{\alpha,0}) ~\ge~ \alpha,\quad\mbox{for every}\quad\alpha\in(0, 1).
\end{equation}
\end{thm}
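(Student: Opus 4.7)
The plan is to reduce both bounds to an analysis of the map $\Delta\mapsto P(k;\Delta)$ for $k\in\{B_{\alpha,0}-1,\,B_{\alpha,0}\}$, invoking Lemma~\ref{lem:fixed-B-coverage-result} after conditioning on the randomized batch count $B^{*}$. Write $B:=B_{\alpha,0}$ and $\tau:=\tau_{\alpha,0}$. Since $S_1,\dots,S_{B^{*}}$ are disjoint, the estimators $\widehat{\theta}_1,\dots,\widehat{\theta}_{B^{*}}$ are independent conditional on $(B^{*},U)$ and each has median bias at most $\widetilde{\Delta}$, so Lemma~\ref{lem:fixed-B-coverage-result} yields $\mathbb{P}(\theta_0\notin\widehat{\mathrm{CI}}_{\alpha,0}\mid B^{*}=k)\le P(k;\widetilde{\Delta})$. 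Averaging over $U$ with the weights from~\eqref{eq:definition-tau} gives
\[
\mathbb{P}\left(\theta_0\notin\widehat{\mathrm{CI}}_{\alpha,0}\right)~\le~\tau\,P(B-1;\widetilde{\Delta})+(1-\tau)\,P(B;\widetilde{\Delta}),
\]
while by construction of $\tau$, the identity $\tau\,P(B-1;0)+(1-\tau)\,P(B;0)=\alpha$ holds exactly.

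For the upper bound~\eqref{eq:some-order-coverage-upper-bound}, I would Taylor expand $\Delta\mapsto P(k;\Delta)$ around $\Delta=0$. Since $P(k;\Delta)=(1/2+\Delta)^{k}+(1/2-\Delta)^{k}$ is even, its first derivative vanishes at zero, and direct computation gives the second derivative $\partial_\Delta^{2}P(k;\Delta)=k(k-1)\,P(k-2;\Delta)$. The integral form of the Taylor remainder, combined with the monotonicity of $P(k-2;\cdot)$ on $[0,1/2]$, yields
\[
P(k;\widetilde{\Delta})-P(k;0)~\le~\frac{k(k-1)}{2}\,\widetilde{\Delta}^{2}\,P(k-2;\widetilde{\Delta}).
\]
Plugging this into the displayed inequality above, applying $(B-1)(B-2)\le B(B-1)$, and subtracting $\alpha$ collapses the upper bound to $\alpha$ plus a quadratic-in-$\widetilde{\Delta}$ remainder. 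The main obstacle will be the final bookkeeping: to reach the multiplicative form $\alpha(1+B(B-1)\widetilde{\Delta}^{2}/2)$ rather than a looser additive one, the factor $P(k-2;\widetilde{\Delta})$ must be amortized against $\alpha$ itself. I expect this step to exploit the scaling $P(k-2;0)=4\,P(k;0)$ together with the sandwich $P(B;0)\le\alpha\le P(B-1;0)=2\,P(B;0)$, which places $\alpha$ and $P(B-2;\widetilde{\Delta})$ on the same $2^{-B}$ scale for small $\widetilde{\Delta}$, echoing the device used for Proposition~\ref{prop:approximate-to-exact-validity}.

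For the lower bound~\eqref{eq:some-order-coverage-lower-bound}, the hypothesis $\mathbb{P}(\widehat{\theta}_j=\theta_0)=0$ together with the assumption that every $\widehat{\theta}_j$ has the \emph{same} median bias $\widetilde{\Delta}$ (on the same side of $\theta_0$, which is the natural reading since otherwise the product $\prod p_j+\prod q_j$ can dip below $P(k;0)$) turns Lemma~\ref{lem:fixed-B-coverage-result} into an equality: the miscoverage of $[\min_j\widehat{\theta}_j,\,\max_j\widehat{\theta}_j]$ equals $P(k;\widetilde{\Delta})$ exactly. Averaging over $U$ as in the setup step gives $\mathbb{P}(\theta_0\notin\widehat{\mathrm{CI}}_{\alpha,0})=\tau P(B-1;\widetilde{\Delta})+(1-\tau)P(B;\widetilde{\Delta})$. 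Since $\partial_\Delta^{2}P(k;\Delta)=k(k-1)P(k-2;\Delta)\ge0$ and $\partial_\Delta P(k;0)=0$, each $P(k;\cdot)$ is non-decreasing on $[0,1/2]$, so $P(k;\widetilde{\Delta})\ge P(k;0)$ term by term, and the weighted sum is bounded below by $\tau P(B-1;0)+(1-\tau)P(B;0)=\alpha$, as claimed.
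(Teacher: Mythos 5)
Your overall route is the same as the paper's: condition on the randomized batch count $B^{*}$, apply Lemma~\ref{lem:fixed-B-coverage-result} to get $\mathbb{P}(\theta_0\notin\widehat{\mathrm{CI}}_{\alpha,0})\le \tau_{\alpha,0}P(B_{\alpha,0}-1;\widetilde{\Delta})+(1-\tau_{\alpha,0})P(B_{\alpha,0};\widetilde{\Delta})$, use the identity $\tau_{\alpha,0}P(B_{\alpha,0}-1;0)+(1-\tau_{\alpha,0})P(B_{\alpha,0};0)=\alpha$ from~\eqref{eq:definition-tau}, and compare $P(k;\widetilde{\Delta})$ with $P(k;0)$ via a second-order expansion. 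Your lower-bound argument is correct and essentially identical to the paper's (which writes the miscoverage as $(1/2+\widetilde{\Delta})^{B^{*}}+(1/2-\widetilde{\Delta})^{B^{*}}$ and uses that $\Delta\mapsto P(k;\Delta)$ is minimized at $0$); your parenthetical that all estimators must be biased to the \emph{same side} is a genuinely necessary reading that the paper leaves implicit, since for $B=2$ with opposite-side biases the miscoverage is $1/2-2\widetilde{\Delta}^2<\alpha$.

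The gap is in the upper bound, and the ``main obstacle'' you flag cannot be overcome as stated. Your expansion $\partial_{\Delta}^{2}P(k;\Delta)=k(k-1)P(k-2;\Delta)$ is correct and gives the \emph{additive} remainder $P(k;\widetilde{\Delta})-P(k;0)\le\frac{k(k-1)}{2}\widetilde{\Delta}^{2}P(k-2;\widetilde{\Delta})$; but since $P(k-2;0)=4P(k;0)$, the \emph{multiplicative} correction is $P(k;\widetilde{\Delta})/P(k;0)=\sum_{j\ge0}\binom{k}{2j}(2\widetilde{\Delta})^{2j}=1+2k(k-1)\widetilde{\Delta}^{2}+O(\widetilde{\Delta}^{4})$, four times the coefficient in~\eqref{eq:some-order-coverage-upper-bound}. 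Indeed~\eqref{eq:some-order-coverage-upper-bound} fails with the constant $1/2$: take $\alpha=1/2$, so $B_{\alpha,0}=2$ and $\tau_{\alpha,0}=0$, with two estimators satisfying $\mathbb{P}(\widehat{\theta}_j>\theta_0)=1/2+\widetilde{\Delta}$; the miscoverage is $1/2+2\widetilde{\Delta}^{2}$, which exceeds $\frac{1}{2}(1+\widetilde{\Delta}^{2})$. The paper's own proof reaches $1/2$ only through~\eqref{eq:Delta-zero-inequality}, where the Lagrange remainder of the ratio $P(B;\widetilde{\Delta})/P(B;0)=\frac{(1+2\widetilde{\Delta})^{B}+(1-2\widetilde{\Delta})^{B}}{2}$ is recorded as $\frac{B(B-1)}{2}\widetilde{\Delta}^{2}P(B-2;\widetilde{\Delta}^{*})$ when it is actually $B(B-1)2^{B-2}P(B-2;\widetilde{\Delta}^{*})\widetilde{\Delta}^{2}$, i.e.\ a factor $2^{B-1}$ is lost. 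So your route, executed carefully with the amortization $\tau P(B-3;0)+(1-\tau)P(B-2;0)=4\alpha$ that you anticipate, proves the theorem with $\frac{B_{\alpha,0}(B_{\alpha,0}-1)}{2}$ replaced by roughly $2B_{\alpha,0}(B_{\alpha,0}-1)(1+2\widetilde{\Delta})^{B_{\alpha,0}-2}$; this preserves every downstream conclusion (second-order accuracy, the rates in~\eqref{eq:conclusion-theorem-BE-bound}) but not the stated constant, and you should state the corrected constant rather than chase the one in the theorem.
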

{\color{black}Note that the conditions of same median bias $\widetilde{\Delta}$ and same probability of undercoverage are trivally satisfied if all of the batches contain the same number of observations and all the observations are independent and identically distributed. }
\begin{rem}\label{rem:miscoverage-Delta}
In Section~\ref{appsec:infinite-order-coverage}, we also consider the case when $\Delta$ is not necessarily 0. 
If the finite-sample median bias $\widetilde{\Delta}$ of the estimation procedure $\mathcal{A}(\cdot)$ is close to $\Delta$ (rather than zero), then 
\[
\mathbb{P}(\theta_0 \notin \widehat{\mathrm{CI}}_{\alpha,\Delta}) \le \alpha\left(1 + 2|\widetilde{\Delta} - \Delta|\right)^{B_{\alpha,\Delta}}.
\]
Further, if $\mathbb{P}(\widehat{\theta}_j = \theta_0) = 0$ and the estimators $\widehat{\theta}_j, j\ge 1$ all have the same median bias $\widetilde{\Delta}$, then
\[
\mathbb{P}(\theta_0 \notin \widehat{\mathrm{CI}}_{\alpha,\Delta}) \ge \alpha\left(1 + 2|\widetilde{\Delta} - \Delta|\right)^{-B_{\alpha,\Delta}}.
\]
These two inequalities imply that if $B_{\alpha,\Delta}|\widetilde{\Delta} - \Delta| = o(1)$, then $\mathbb{P}(\theta_0\notin\widehat{\mathrm{CI}}_{\alpha,\Delta})$ converges to $\alpha$.
\end{rem}
\begin{rem}
The main conclusion of Theorem~\ref{thm:infinite-order-coverage} is that Algorithm~\ref{alg:confidence-known-Delta} can be used with $\Delta = 0$ and it retains asymptotic validity for large sample sizes if  the estimation procedure $\mathcal{A}(\cdot)$ produces asymptotically median unbiased estimators. 
\end{rem}
Theorem~\ref{thm:infinite-order-coverage} is a finite sample result characterizing explicitly the effect of misspecifying $\Delta$ in Algorithm~\ref{alg:confidence-known-Delta}. The misspecification of $\Delta$ is measured by how far the median bias $\widetilde{\Delta}$ of the estimators $\widehat{\theta}_j, j\ge 1$ is from $\Delta$, the asymptotic median bias. 
To illustrate Theorem~\ref{thm:infinite-order-coverage}, consider the setting under which~\eqref{eq:BE-median-bias} holds true.
Theorem~\ref{thm:infinite-order-coverage} along with~\eqref{eq:BE-median-bias} implies that,
\begin{equation}\label{eq:conclusion-theorem-BE-bound}
\alpha ~\le~ \mathbb{P}(\theta_0 \notin \widehat{\mathrm{CI}}_{\alpha,0}) ~\le~ \alpha\left(1 + \mathfrak{C}'_X\frac{B_{\alpha,0}^3}{n}e^{\mathfrak{C}_XB_{\alpha,0}^{3/2}/n^{1/2}}\right),\quad\mbox{for every}\quad \alpha\in(0, 1).
\end{equation}
In case an estimation procedure $\mathcal{A}(\cdot)$ with reduced median bias is employed in Algorithm~\ref{alg:confidence-known-Delta}, then we get $\widetilde{\Delta} \le \mathfrak{D}_X(B_{\alpha,0}/n)^{3/2}$ for some constant $\mathfrak{D}_X$ and hence, Theorem~\ref{thm:infinite-order-coverage} yields
\begin{equation}\label{eq:conclusion-theorem-Edgeworth-expansion}
\alpha ~\le~ \mathbb{P}(\theta_0 \notin \widehat{\mathrm{CI}}_{\alpha,0}) ~\le~ \alpha\left(1 + \mathfrak{D}_X'\frac{B_{\alpha,0}^5}{n^3}e^{\mathfrak{D}_XB_{\alpha,0}^{5/2}/n^{3/2}}\right),\quad\mbox{for every}\quad \alpha\in(0, 1). 
\end{equation}

Theorem~\ref{thm:infinite-order-coverage} (and the conclusions~\eqref{eq:conclusion-theorem-BE-bound},~\eqref{eq:conclusion-theorem-Edgeworth-expansion}) can be compared to the guarantees offered by classical confidence intervals constructed based on the assumption of asymptotic normality. Under a bound like~\eqref{eq:BE-bound-general}, such confidence intervals only satisfy
\begin{equation}\label{eq:order-of-coverage}
\left|\mathbb{P}\left(\theta_0 \notin \widehat{\mathrm{CI}}^{\texttt{Wald}}_{\alpha}\right) - \alpha\right| \le \frac{\mathfrak{C}_X}{\sqrt{n}},\quad\mbox{for every}\quad \alpha\in(0, 1).
\end{equation}
In other words, the coverage of $\widehat{\mathrm{CI}}^{\texttt{Wald}}_{\alpha}$ differs from $(1-\alpha)$ by a quantity of order $1/\sqrt{n}$ and can significantly miscover if $\alpha \ll 1/\sqrt{n}$. The same comment also applies to the bootstrap and subsampling confidence intervals. Confidence intervals obtained by various methods are often compared in terms of the rate of convergence in~\eqref{eq:order-of-coverage}. In parametric models or, more generally, cases where $\theta_0$ is estimable at an $n^{-1/2}$ rate, confidence intervals which attain a rate of $n^{-1/2}$ in~\eqref{eq:order-of-coverage} are called first-order accurate, those that attain a rate of $n^{-1}$ are called second-order accurate and so on. Asymptotic normality based Wald confidence intervals $\widehat{\mathrm{CI}}_{\alpha}^{\texttt{Wald}}$ are usually first-order accurate. Bootstrap confidence intervals can be constructed to be second-order accurate~\citep{hall1986bootstrap,hall1988theoretical,mammen1992bootstrap}. Subsampling intervals can also be constructed to satisfy second-order accuracy~\citep{bertail2001extrapolation}. In contrast, the \hulc readily obtains second-order accuracy and is valid even if $\alpha$ converges to zero. Further, if we use an estimator with reduced median bias, the \hulc is sixth-order accurate{\color{black}; see}~\eqref{eq:conclusion-theorem-Edgeworth-expansion}. Another important difference is that the \hulc attains relative accuracy (i.e., $|\mathbb{P}(\theta_0\notin\widehat{\mathrm{CI}}_{\alpha,0})/\alpha - 1|$ is small) instead of absolute accuracy as in~\eqref{eq:order-of-coverage}. In the problem of mean estimation, some results for relative accuracy of Wald confidence intervals are available using self-normalized large deviation techniques~\citep{shao1997self,jing2003self}. To our knowledge, such refined results are unavailable for a large class of $M$-estimators.

\subsection{Comparison with Wald confidence intervals}\label{subsec:comparison}
In this section
we show that
our intervals have lengths close to those of the Wald intervals. In order to facilitate this comparison, we assume in this section that $\alpha \rightarrow 0$ slowly as a function of $n$.
Let $\widehat\theta = {\cal A}(\{X_1,\ldots, X_n\})$ and
$\widehat\theta_j = {\cal A}(\{X_i:\ i\in S_j)$.
Suppose that
\begin{equation}\label{eq:asymptotic-normality}
\sqrt{n}(\widehat{\theta} - \theta_0) \overset{d}{\to} N(0, \sigma^2)\quad\mbox{and}\quad \sqrt{|S_j|}(\widehat{\theta}_j - \theta_0) \overset{d}{\to} N(0, \sigma^2),
\end{equation}
as $n\to\infty$ and $|S_j| \to \infty$ for all $1\le j\le B^*$. Under
this assumption, if $\widehat{\sigma}^2$ is a consistent estimator
$\sigma^2$, then the Wald confidence interval is given by
$\widehat{\mathrm{CI}}^{\texttt{Wald}}_{\alpha} :=
[\widehat{\theta} -
  \widehat{\sigma}z_{\alpha/2}/{\sqrt{n}},\,\widehat{\theta} +
  \widehat{\sigma}z_{\alpha/2}/{\sqrt{n}}],$ where
$z_{\alpha/2}$ is the $(1-\alpha/2)$-th quantile of the standard
Gaussian distribution.  The {\color{black}scaled} width of this confidence interval is given
by
$\sqrt{n}\mbox{Width}(\widehat{\mathrm{CI}}^{\texttt{Wald}}_{\alpha})
= 2z_{\alpha/2}\widehat{\sigma}.$ This converges in probability to
$2z_{\alpha/2}\sigma$. From the properties of the normal distribution,
it follows that $z_{\alpha/2} = \sqrt{2\log(2/\alpha) -  \log(\log(2/\alpha)) - \log(2\pi)} + o(1)$ as $\alpha\to0$. See, for
example, Proposition 4.1 of~\cite{boucheron2012concentration}. Hence,
the width of the Wald confidence interval is asymptotically equal to
$2\sigma\sqrt{2\log(2/\alpha)}/\sqrt{n},$ as $\alpha \rightarrow 0$ and $n \rightarrow \infty$.

To compare this width to the width of the \hulcnospace, for simplicity, we treat $B^*$ as a fixed (i.e., non-stochastic) value and assume that $n$ is a multiple of $B^*$ so that each split has $n/B^*$ many observations. Assumption~\eqref{eq:asymptotic-normality} implies that
$\sqrt{{n}/{B^*}}(\widehat{\theta}_j - \theta_0) \overset{d}{\to} N(0, \sigma^2)$ for $1\le j\le B^*.$
Because the estimators are independent and $B^* \le B_{\alpha,\Delta} < \infty$, we get that the convergence is joint for all the estimators $\widehat{\theta}_j, 1\le j\le B^*$. Recall that our confidence interval is the smallest rectangle {\color{black}(interval)} containing these estimators and hence
\begin{equation}\label{eq:width-exact-equation}
\sqrt{\frac{n}{B^*}}\mbox{Width}(\widehat{\mathrm{CI}}_{\alpha,\Delta}) = \sqrt{\frac{n}{B^*}}\left[\max_{1\le j\le B^*}\widehat{\theta}_j - \min_{1\le j\le B^*}\widehat{\theta}_j\right] = \max_{1\le j < k\le B^*}\sqrt{\frac{n}{B^*}}(\widehat{\theta}_j - \widehat{\theta}_k).
\end{equation}
Joint asymptotic convergence of the estimators implies that
\begin{equation}\label{eq:width-convergence}
\sqrt{\frac{n}{B^*}}\mbox{Width}(\widehat{\mathrm{CI}}_{\alpha,\Delta}) \overset{d}{\to} \max_{1\le j < k \le B^*}(G_j - G_k) = \max_{1\le j\le B^*} G_j - \min_{1\le j\le B^*} G_j, 
\end{equation}
where $(G_1, \ldots, G_{B^*})$ is a Gaussian random vector with mean zero and a diagonal covariance matrix with all diagonal entries equal to $\sigma^2$. This shows the first difference in widths. Unlike the classical Wald confidence intervals, the width of our confidence interval does not degenerate after scaling by $\sqrt{n}$; the width after proper scaling converges weakly to a non-degenerate distribution. 
Using~\eqref{eq:width-convergence}, we can control of the width of our confidence region in terms of the width of the convex hull of $B^*$ many independent mean zero Gaussian random variables. Because $G_j$'s are symmetric around zero,
\[
\mathbb{E}\left[\max_{1\le j\le B^*}G_j - \min_{1\le j\le B^*}G_j\right] = 2\mathbb{E}\left[\max_{1\le j\le B^*}G_j\right] = 2\sqrt{2\log(B^*)}\left[1 - \frac{\log\log B^*}{4\log B^*} + O\left(\frac{1}{\log B^*}\right)\right].
\]
The last equality here holds as $\alpha\to0$ and follows from Theorem 1.2 of~\cite{kabluchko2019expected} (and the discussion before that theorem). Therefore, the width of our confidence interval is asymptotically
$2\sigma\sqrt{2B^*\log(B^*)/n}.$
From inequalities~\eqref{eq:inequalities-B-alpha-Delta}, we know that $B_{\alpha,\Delta}$ and $B^*$ are of order $\log_2(2/\alpha)$; note that under asymptotic normality, we can take $\Delta = 0$. Hence, the width of our confidence interval is asymptotically
\begin{equation}\label{eq:width-convex-hull}
2\sigma\sqrt{\frac{2\log(2/\alpha)}{n}}\sqrt{\log_2(\log_2(2/\alpha))}.
\end{equation}
This implies that the ratio of the expected width of our confidence interval to that of the Wald interval is approximately equal to $\sqrt{\log_2(\log_2(2/\alpha))}$.
This is always larger than $1$, and grows very slowly as $\alpha\to0$. For $\alpha\in[0.01, 0.2]$, this ratio ranges between $1.71$ and $1.32$. In a way, this is the price to pay for the generality of the confidence interval. While the Wald confidence interval makes complete use of asymptotic normality, our confidence interval only makes use of the fact that its median is zero; we do not even make use of symmetry. 

Unlike the Wald confidence interval, the \hulc does not explicitly or implicitly estimate the variance of the estimator but its width as given in~\eqref{eq:width-convex-hull} adapts to the unknown standard deviation $\sigma$. The calculation shown above uses asymptotic arguments, but some simple bounds can be obtained using no more than two moments for $\widehat{\theta}_j$. Observe from~\eqref{eq:width-exact-equation} that
\begin{equation}\label{eq:width-simple-calculation}
\begin{split}
\mathbb{E}\left[\mbox{Width}(\widehat{\mathrm{CI}}_{\alpha,\Delta})\right] &= \mathbb{E}\left[\max_{1\le j\le B^*}\widehat{\theta}_j - \min_{1\le j\le B^*}\widehat{\theta}_j\right]\\
&\le 2\mathbb{E}\left[\max_{1\le j\le B^*}|\widehat{\theta}_j - \theta^*|\right] \le 2\left(\mathbb{E}\left[\max_{1\le j\le B^*}|\widehat{\theta}_j - \theta^*|^2\right]\right)^{1/2}\\
&\le 2\left(\mathbb{E}\left[\sum_{j=1}^{B^*}|\widehat{\theta}_j - \theta^*|^2\right]\right)^{1/2} \le 2\sqrt{B^*}\max_{1\le j\le B^*}\left(\mathbb{E}[|\widehat{\theta}_j - \theta^*|^2]\right)^{1/2}.
\end{split}
\end{equation}
Assuming convergence in mean square of $\sqrt{n/B^*}(\widehat{\theta}_j - \theta^*)$ to a distribution with mean zero and variance $\sigma^2$, we get that the expected width is asymptotically bounded by $2\sigma B^*/\sqrt{n}$. This calculation does not require convergence to Gaussianity and shows that the width of our confidence interval, in general, adapts to the standard deviation of the estimators. The calculation~\eqref{eq:width-simple-calculation} can be significantly improved if the estimators are known to have higher moments. In the first inequality of~\eqref{eq:width-simple-calculation} we only use second moment Jensen's inequality. Replacing the second moments by $q$-th moment here will yield $(B^*)^{1/q}$ instead of $\sqrt{B^*}$ in the last line of~\eqref{eq:width-simple-calculation}.

\paragraph{Transformed Parameters.} 
In contrast to Wald intervals,
the \hulc interval
is equivariant to
monotone transformations{\color{black}, assuming that the estimators are equivariant under monotone transformations}. 
It is worth noting that the validity of our confidence interval does
not require any smoothness conditions on the transformation
$g(\cdot).$ In comparison, the delta method requires continuous
differentiability of $g(\cdot)$.
\subsection{Numerical Comparisons}
\subsubsection{Simple Linear Regression}
Figure~\ref{fig:comparison-coverage-lm}
shows the coverage and width of the
95\% \hulc interval
{\color{black}(obtained from Algorithm~\ref{alg:confidence-known-Delta} with $\Delta = 0$)}
and the Wald interval
from ordinary least squares linear
regression. The simulation setting is as follows: for $n \in \{20, 50,
100, 1000\}$, independent observations $(X_i, Y_i), 1\le i\le n$ are
generated from
\begin{equation}\label{eq:gamma-distribution}
X_i\sim \mbox{Uniform}[0, 10],\;\xi_i\sim N(0, 1),\quad\mbox{and}\quad Y_i = 1 + 2X_i + \gamma X_i^{1.7} + \exp(\gamma X_i)\xi_i.
\end{equation}
For $\gamma = 0$, observations $(X_i, Y_i)$ follow the standard linear model and for $\gamma > 0$, observations do not follow a linear model with non-linear mean function and a heteroscedastic error variable. {\color{black}With $\gamma$, misspecification from linear conditional expectation and homoscedasticity increase}. We define the estimator and target as $\widehat{\beta}$ and $\beta^*_{\gamma}$, where
\[
(\widehat{\alpha}, \widehat{\beta}) := \argmin_{\alpha,\beta}\frac{1}{n}\sum_{i=1}^n (Y_i - \alpha - \beta X_i)^2,\quad\mbox{and}\quad (\alpha^*_{\gamma}, \beta^*_{\gamma}) := \argmin_{\alpha,\beta}\mathbb{E}_{\gamma}[(Y - \alpha - \beta X)^2].
\]
Here $\mathbb{E}_{\gamma}[\cdot]$ represents the expectation when $(X, Y)$ are generated from~\eqref{eq:gamma-distribution}.
Note that $\beta^*_{\gamma}$ need not be equal to $2$ for $\gamma > 0$. By Monte-Carlo approximation of $\mathbb{E}_{\gamma}[\cdot]$ with $10^8$ samples, we have $\beta_{0.25}^* = 3.2791, \beta^*_{0.5} = 4.5567, \beta^*_{0.75} = 5.8239,$ and $\beta^*_{1} = 6.8093$. The Wald interval in this case is obtained using the sandwich variance estimator as in~\cite{buja2019models}.
\begin{figure}[!h]
\centering
\includegraphics[width=\textwidth]{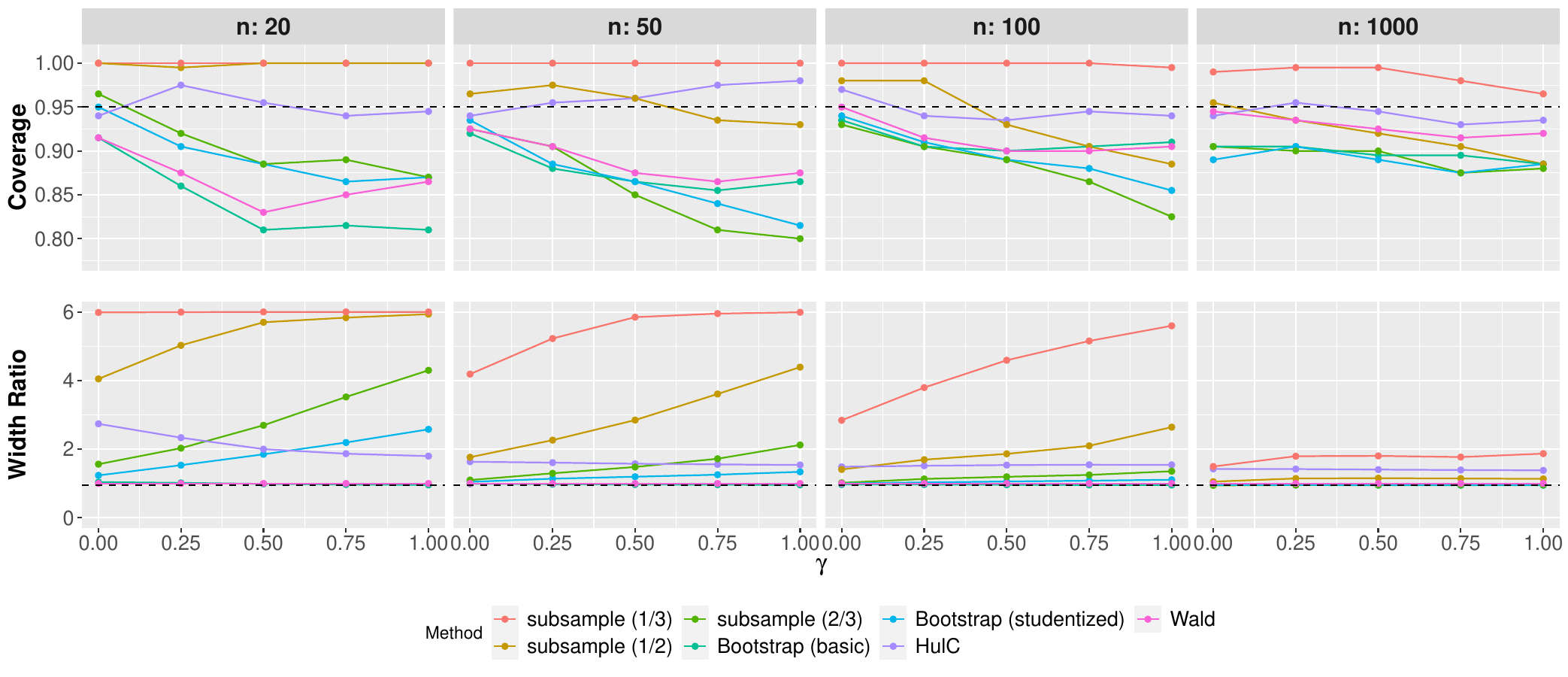}
\vspace{-0.4in}
\caption{{\color{black}Comparison of width and coverage between our confidence interval with Wald's, subsampling, and two bootstraps with a simple linear regression slope estimator as the sample size changes from 20 to 1000 and misspecification parameter $\gamma$ changes from $0$ to $1$. Our method is shown as ``HulC,'' Wald's is shown as ``Wald,'' subsampling with different subsample sizes are shown as ``subsample (1/3)'' (with subsample size $n^{1/3}$), ``subsample (1/2)'' (with subsample size $n^{1/2}$), ``subsample (2/3)'' (with subsample size $n^{2/3}$), and two bootstraps are shown as ``Bootstrap (basic)'' and ``Bootstrap (studentized).'' The empirical coverage in the left plot is computed based on 200 replications. The width ratios are truncated at $6$. The subsampling methods can have much larger confidence interval for smaller sample sizes. The four column plots correspond to four different sample sizes $n = 20, 50, 100, 1000$. The bottom panel shows the ratio of the widths of our confidence interval as well as bootstrap/subsampling confidence intervals to that of the Wald confidence interval; the \hulc yields a $50\%$ larger interval than Wald's.}
}
\label{fig:comparison-coverage-lm}
\end{figure}
\subsubsection{Multiple Linear Regression}
Figure~\ref{fig:width-coverage-comparison-multiple-linear-regression} provides an illustration when the estimator is obtained from multiple linear regression. The setting for Figure~\ref{fig:width-coverage-comparison-multiple-linear-regression} is as follows: for $20 \le n\le 500$, independent observations $(X_i, Y_i)\in\mathbb{R}^6\times \mathbb{R}, 1\le i\le n$ are generated from $Y_i = |\theta_0^{\top}X_i| + \xi_i$, where $\xi_i\sim N(0, 1)$ and $X_i\in\mathbb{R}^6$ is generated according to the following law: $(X_{i,1}, X_{i,2}) \sim \text{Uniform} [-1,1]^2$, $ X_{i,3}:=0.2 X_{i,1}+ 0.2 (X_{i,2}+2)^2+0.2 Z_{i,1}$, $X_{i,4}:=0.1+ 0.1(X_{i,1}+X_{i,2})+0.3(X_{i,1}+1.5)^2+  0.2 Z_{i,2}$, $X_{i,5} \sim \text{Ber}(\exp(X_{i,1})/\{1+\exp(X_{i,1})\}),$ and  $ X_{i,6} \sim \text{Ber}(\exp(X_{i,2})/\{1+\exp(X_{i,2})\})$. Here $(Z_{i,1}, Z_{i,2})\sim\text{Uniform} [-1,1]^2$ are independent of $(X_{i,1},X_{i,2})$ and $\theta_0 = (1.3, -1.3, 1, -0.5, -0.5, -0.5)/\sqrt{5.13}$. This is also a misspecified linear regression model and is taken from~\cite{kuchibhotla2021semiparametric}. Our estimator and target are defined as
\[
(\widehat{\alpha},\widehat{\beta},\widehat{\gamma}) := \argmin_{\alpha,\beta,\gamma}\frac{1}{n}\sum_{i=1}^n (Y_i - \alpha - \beta X_{i,1} - \gamma^{\top}X_{i,-1})^2,\quad\mbox{and}\quad (\alpha^*, \beta^*, \gamma^*) := \argmin_{\alpha,\beta,\gamma}\mathbb{E}[(Y - \alpha - \beta X_1 - \gamma^{\top}X_{-1})^2],
\]
where $X_{i,-1}$ and $X_{-1}$ represent the last 5 coordinates of $X_{i}$ and $X$ respectively. With Monte Carlo approximation of $\mathbb{E}[\cdot]$, we found that $\beta^* = -0.137323$. For level $\alpha = 0.05$, the \hulc {\color{black}(with $\Delta = 0$)} requires splitting the data into approximately 5 parts. This implies that for a sample of size $20$, each part only has 4 observations and one cannot fit uniquely a linear regression estimator because the model has 6 covariates. Interestingly, when we just use the output from \texttt{R} function \texttt{lm()}, the \hulc still covers the true $\beta^*$ with required confidence because in this case \texttt{lm()} simply ignores the last 2 covariates.
\begin{figure}[!h]
\centering
\includegraphics[width=\textwidth]{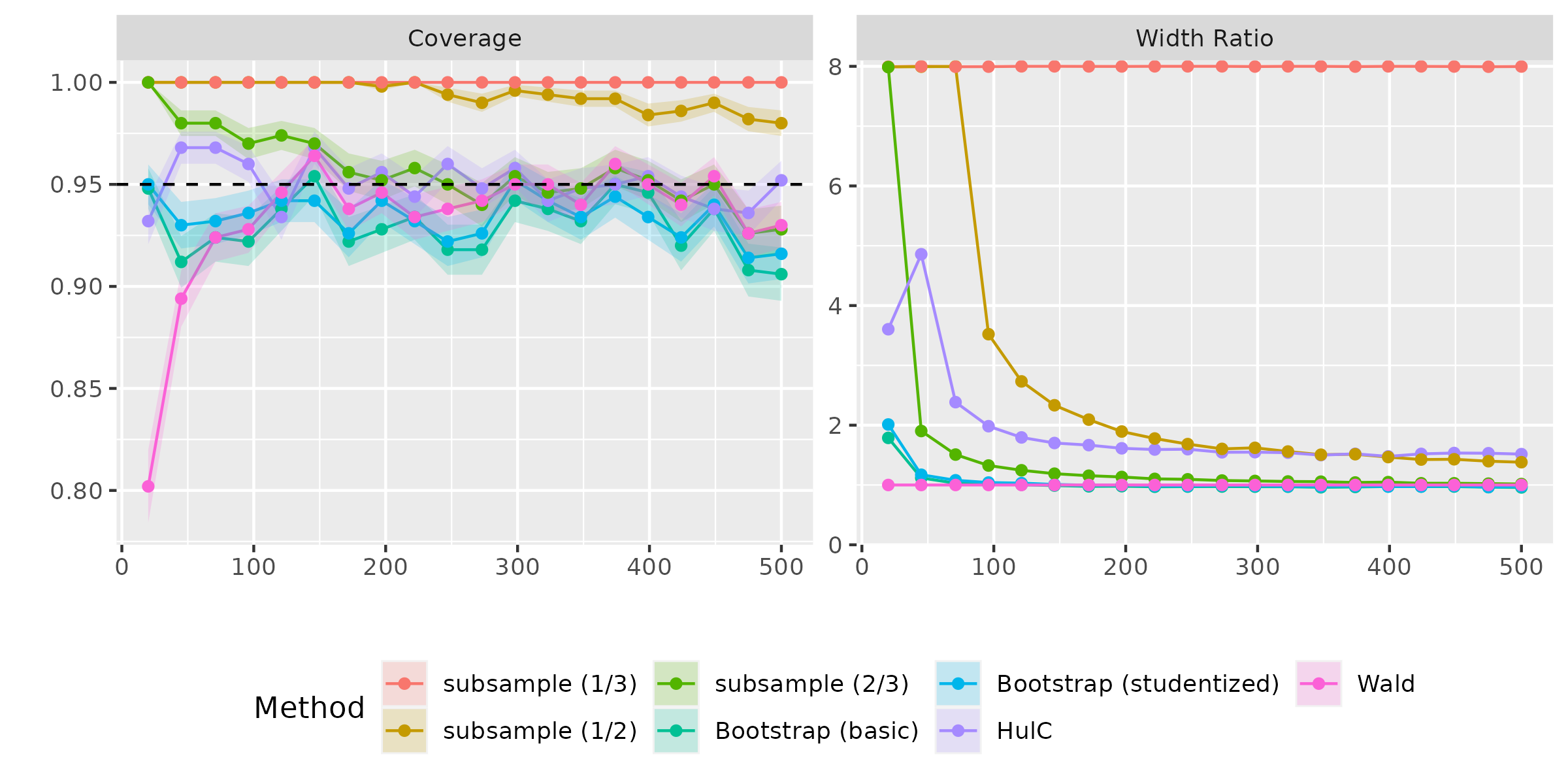}
\vspace{-0.4in}
\caption{{\color{black}Comparison of width and coverage between our confidence interval with Wald's, subsampling, and two bootstraps with a multiple linear regression estimator as the sample size changes from 20 to 500. Our method is shown as ``HulC,'' Wald's is shown as ``Wald,'' subsampling with different subsample sizes are shown as ``subsample (1/3)'' (with subsample size $n^{1/3}$), ``subsample (1/2)'' (with subsample size $n^{1/2}$), ``subsample (2/3)'' (with subsample size $n^{2/3}$), and two bootstraps are shown as ``Bootstrap (basic)'' and ``Bootstrap (studentized).'' The empirical coverage in the left plot is computed based on 200 replications for each sample size between 20 and 500. The width ratios are truncated at $y = 8$. The subsampling methods can have much larger confidence interval for smaller sample sizes.}}
\label{fig:width-coverage-comparison-multiple-linear-regression}
\end{figure}
\subsubsection{Quantile Regression}
{\color{black}
In the previous examples of simple and multiple regression both bootstrap and subsampling are well-known to be consistent and their width matches that of Wald interval as the sample size diverges. In this subsection, we consider the example of quantile regression where the theory can be a lot more subtle. We restrict ourselves to the case of quantile regression with one covariate and no intercept, i.e., with data $(X_i, Y_i)\in\mathbb{R}^2, 1\le i\le n$, the estimator $\widehat{\theta}_n$ is given by
\[
\widehat{\theta}_n := \argmin_{\theta\in\mathbb{R}}\sum_{i=1}^n |Y_i - \theta X_i|.
\] 
This is a univariate M-estimation problem with a convex objective function and the results of~\cite{kuchibhotla2021median} imply that $\widehat{\theta}_n$ is asymptotically median unbiased for the population parameter $\theta_0 = \argmin_{\theta\in\mathbb{R}}\mathbb{E}[|Y - \theta X|]$ without requiring any assumptions on the conditional distribution of $Y$ given $X$. On the other hand, asymptotic normality of $\widehat{\theta}_n$ requires existence of the conditional density of $Y$ given $X$; see~\cite{knight1998limiting,knight1999asymptotics,knight2008asymptotics} for details. Further, if the conditional density of $Y$ given $X$ does not exist, then the rate of convergence depends on the smoothness properties of the conditional distribution function. For our discussion, we focus on a particular example discussed in Example 1 of~\cite{knight1999asymptotics}. 

Suppose $(X_i, Y_i), 1\le i\le n$ are independent and identically distributed random vectors obtained via
\[
Y_i = X_i + \varepsilon_i\quad\mbox{where}\quad \mathbb{P}(\varepsilon_i \le t) = 0.5(1 + |t|^{\alpha}\mbox{sgn}(t)),\quad t\in[-1, 1].
\]
Theorem 1 (and Example 1) of~\cite{knight1999asymptotics} imply that $n^{1/(2\alpha)}(\widehat{\beta} - 1)$ converges in distribution to a non-normal distribution defined by a minimization problem. Section 3 of~\cite{knight1999asymptotics} shows that bootstrap is consistent for this problem if and only if $\alpha = 1$ (or equivalently, when the limiting distribution is normal). Subsampling is also not readily applicable because the rate of convergence of the estimator is unknown apriori. But subsampling with estimated rate of convergence as in~\cite{bertail1999subsampling} is applicable. We compare the performance of \hulc, and subsampling with estimated rate of convergence with three different choices of subsample sizes ($n^{1/3}$, $n^{1/2}$, and $n^{2/3}$) across different $\alpha\in(0, 1]$ and different sample sizes $n$. (It is worth mentioning here that \hulc does not involve any tuning parameters where as subsampling with estimated rate of convergence includes more than 10 tuning parameters other than the subsample size.) Figure~\ref{fig:quantile_reg} shows the performance of these procedures based on 200 Monte Carlo replications for each sample size and each $\alpha$.
\begin{figure}[!h]
\centering
\includegraphics[width=\textwidth]{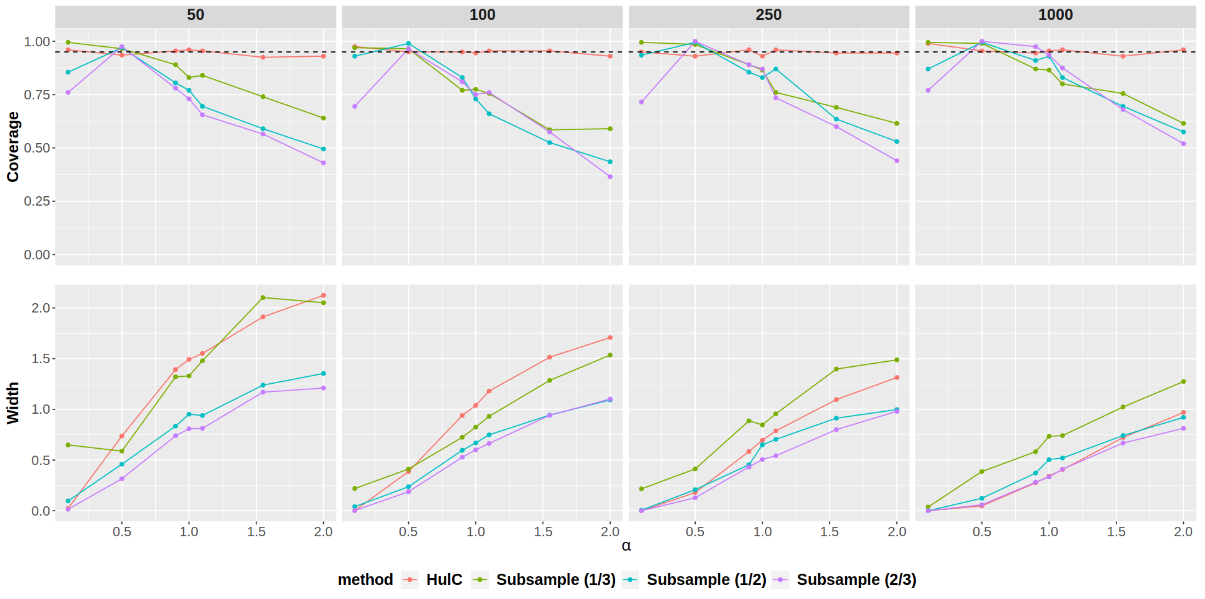}
\vspace{-0.25in}
\caption{Comparison of \hulc and subsampling with estimated rate of convergence in quantile regression under non-standard conditions. The sample size is mentioned at the top of each plot and the smoothness parameter of the distribution $\alpha$ is on the $x$-axis. \hulc maintains the coverage at the nomial level of $0.95$ for all sample sizes, while subsampling with any subsample size fails for larger values of $\alpha$. The width of all the confidence intervals are increasing with $\alpha$ as expected.}
\label{fig:quantile_reg}
\end{figure}
}
\section{Applications to standard problems}\label{sec:applications-parametric-models}
In this section, we present some simple applications including mean estimation, median estimation, and parametric exponential models. 
In parametric and semi-parametric models, regularity conditions and efficiency theory implies the existence of estimators which when centered at the target have an asymptotic mean zero Gaussian distribution. In these cases, often one can modify the estimators to ensure reduced median bias. For some examples of (approximately) median-unbiased estimators, see~\cite{birnbaum1964median,john1974median,pfanzagl1970median,pfanzagl1970asymptotic,pfanzagl1972median,pfanzagl1979optimal,hirji1989median,andrews1987best,kenne2017median}.

{\color{black}In all the examples in this section, we assume that the batch sizes are all the same, i.e., $|S_1| = |S_2| = \ldots = |S_m|$. This can be trivially achieved by ignoring less than $B_{\alpha, \Delta}$ observations, if necessary.}

\subsection{Mean estimation}\label{subsec:mean-estimation}
Suppose $X_1, \ldots, X_n$ are independent real-valued random variables with a common mean $\mu\in\mathbb{R}$. Consider the problem of constructing a confidence interval for $\mu$. Note that the random variables need not be identically distributed. If the random variables have a finite second moment and satisfy the Lindeberg condition, then the sample mean $\widebar{X}_{j} = |S_j|^{-1}\sum_{i\in S_j} X_i$ satisfies
$\sqrt{|S_j|}(\widebar{X}_j - \mu) \overset{d}{\to} N(0, \sigma_j^2),$ where $\sigma_j^2 = \sum_{i\in S_j}\mbox{Var}(X_i)/{|S_j|}$.
This implies that the estimator $\widebar{X}_j$ is asymptotically median unbiased and Algorithm~\ref{alg:confidence-known-Delta} with $\Delta = 0$ yields an asymptotically valid confidence interval for $\mu$. In this case, Wald intervals {\color{black}are also asymptotically valid.}

The setting becomes more interesting when we consider random variables with less than two finite moments. In this case, the limiting distribution of $\widebar{X}_j$ is known to be a stable law and its rate of convergence also changes depending on the tail decay of the random variables. 
If the random variables satisfy 
\begin{equation}\label{eq:symmetric-stable-law}
\lim_{x\to\infty} x^{\alpha}\mathbb{P}(X_i > x) = \lim_{x\to\infty}x^{\alpha}P(X_i < -x)\quad\mbox{for some}\quad\alpha\in[1, 2),
\end{equation} 
then the limiting stable law of $\widebar{X}_j$ is symmetric around zero (see, for instance, Theorem 9.34 in~\cite{breiman1968probability}). In this special case, Algorithm~\ref{alg:confidence-known-Delta} continues to provide asymptotically valid confidence intervals, while Wald intervals and the bootstrap are known to fail for $\alpha < 2$; see, for example,~\cite{athreya1987bootstrap} and~\cite{knight1989bootstrap}. In particular, if the underlying distributions are all symmetric around the mean $\mu$, then without any moment assumptions the confidence interval returned by Algorithm~\ref{alg:confidence-known-Delta} is finite sample valid.
It is worth noting that subsampling~\citep{romano1999subsampling} is still applicable in the case of infinite variance.

If the assumption~\eqref{eq:symmetric-stable-law} does not hold true, then the limiting stable law is not symmetric and the asymmetry depends on the gap between the left and right hand side quantities in~\eqref{eq:symmetric-stable-law}. In this case, the median bias of the limiting distribution is not readily available and the methods presented in previous sections are not applicable. This can be resolved using the \ahulc which we describe in Section~\ref{sec:adaptive-convex-hull}.
\subsection{Median estimation}
Suppose $X_1, \ldots, X_n$ are independent real-valued random variables with common median $m\in\mathbb{R}$. Consider the problem of constructing a confidence interval for $m$. The usual estimator for the population median is the sample median. Set $\widehat{\theta}_j = \mbox{median}(X_i: i\in S_j)$. If the average distribution function $\widebar{F}_j(t) = |S_j|^{-1}\sum_{i\in S_j}\mathbb{P}(X_i \le t)$ has a derivative bounded away from zero at $m$, then it is known~\citep{sen1968asymptotic} that
$\sqrt{|S_j|}(\widehat{\theta}_j - m) \overset{d}{\to} N(0, \sigma_j^2),$ where $\sigma_j^2 = (4\widebar{f}_j^2(m))^{-1}$.
Here $\widebar{f}_j(m)$ is the derivative of $\widebar{F}_j(t)$ at $t = m$. There are several classical methods for constructing confidence intervals for $m$ including Wald's, quantile or rank based intervals. Wald confidence intervals in this case require estimating of the density $\widebar{f}_j(\cdot)$ at $m$ and the quantile based intervals require choosing the appropriate quantiles for end points. Unlike the Wald interval, the quantile based intervals are finite sample valid~\citep{lanke1974interval}. Because the limiting distribution is mean zero Gaussian, the \hulc applies and yields an asymptotically valid confidence interval. 

Once again the setting becomes interesting when the underlying distributions do not satisfy the conditions for normality. For example, if the density $\widebar{f}_j(\cdot)$ is not bounded away from zero at the common median $m$, then the limiting distribution of $\widehat{\theta}_j$ is not Gaussian and hence Wald as well as bootstrap intervals break down. The limiting distribution in this case is explicitly described in~\citet[Section 2]{knight1998limiting}.
In this case, the rate of convergence of the median depends on how fast the density decays to zero as $t$ approaches $m$. When the population median $m$ is unique, the sample median computed based on odd number of observations is known to be median unbiased~\citep{mahamunulu1969estimation}. This observation implies that Algorithm~\ref{alg:confidence-known-Delta} with $\Delta = 0$ yields a finite sample valid confidence interval for $m$ if each $S_j$ has an odd number of observations (which can be trivially ensured). In fact, with any given number of observations (even or odd), an estimator that randomly (equally likely) chooses between the $r$-th order statistic and $(|S_j|-r+1)$-th order statistic is median unbiased for $m$ as shown in Section 4 of~\cite{mahamunulu1969estimation}. 

\subsection{Binomial distribution}
Consider $X_1, \ldots, X_n\sim \mbox{Bernoulli}(p)$ for some $p\in(0, 1)$. The problem of constructing confidence intervals for $p$ is a well-studied problem with focus on coverage as $p$ changes with the sample size $n$~\citep{brown2002confidence}. It is well-known that{\color{black}, when properly normalized,} the limiting distribution of Binom$(n, p)$ as $n\to\infty$ changes from a Gaussian to a Poisson distribution depending on whether $np\to\infty$ or $np\to\lambda\in(0,\infty)$. Because of this change, the Wald confidence intervals can undercover $p$ when $p$ is small relative to the sample size $n$~\citep{brown2001interval}. We will now consider the coverage properties of the \hulc when using the sample proportion as an estimator for $p$. For any set $S\subseteq\{1,2,\ldots,n\}$,
$\sum_{i\in S}X_i \sim \mbox{Binom}(|S|, p).$
Theorem 10 of~\cite{doerr2018elementary} shows that whenever $p\in[\log(4/3)/|S|, 1 - \log(4/3)/|S|]$, the estimator $\sum_{i\in S}X_i/|S|$ has a median bias of at most $1/4$.
Theorem 1 of~\cite{greenberg2014tight} yields this result for $p\in[1/|S|, 1 - 1/|S|]$. For a more precise result, see Lemma 8 of~\cite{doerr2018elementary}.
Hence, Algorithm~\ref{alg:confidence-known-Delta} with $\Delta=1/4$ yields finite sample coverage {\color{black}of at least $1-\alpha$} for all $p\in[\log(4/3)/m, 1 - \log(4/3)/m]$; here $m$ represents the minimum number of observations in each split of the data. Because $m \asymp n/\log(2/\alpha)$, we get finite sample coverage validity even for $p = \Theta(1/n)$. Note that the binomial distribution is not approximately normal in this case. 

Allowing for some modifications of either the estimator or the final confidence set, we can obtain finite sample coverage for all $p\in[0, 1]$. Firstly, note that {\color{black}the \hulc interval from} Algorithm~\ref{alg:confidence-known-Delta} with $\Delta = 0$ will always cover the true median of the estimators. With the proportion estimator $\sum_{i\in S_j}X_i/|S_j|$, {\color{black}the \hulc interval from} Algorithm~\ref{alg:confidence-known-Delta} with $\Delta = 0$ with a probability of at least $1 - \alpha$ will contain the median of $\mbox{Binom}(m,p)/m$ where $m = |S_j|$ for all $1\le j\le B^*$. \cite{hamza1995smallest} proves that 
\begin{equation}\label{eq:binomial-median-bias}
\left|\mathbb{E}\left[\frac{\mbox{Binom}(m,p)}{m}\right] - \mbox{median}\left(\frac{\mbox{Binom}(m,p)}{m}\right)\right| = \left|p - \mbox{median}\left(\frac{\mbox{Binom}(m,p)}{m}\right)\right| \le \frac{\log(2)}{m}.
\end{equation}
Therefore, if $\widehat{\mathrm{CI}}_{\alpha,0} = [\widehat{L}, \widehat{U}]$ represents the confidence interval from Algorithm~\ref{alg:confidence-known-Delta} with $\Delta = 0$, we get that for all $p\in[0,1]$,
\[
\mathbb{P}\left(p\notin\left[\widehat{L} - \frac{\log(2)}{m},\,\widehat{U} + \frac{\log(2)}{m}\right]\cap[0,1]\right) \le \alpha.
\] 
This is a modification of confidence interval returned by Algorithm~\ref{alg:confidence-known-Delta} but uses the classical binomial proportion estimator. If we modify the estimator, then no changes are required in Algorithm~\ref{alg:confidence-known-Delta} with $\Delta = 0$ to obtain a finite sample coverage. Because the binomial distribution has a monotone likelihood ratio, the results of~\cite{pfanzagl1970median,pfanzagl1972median} can be applied to obtain a median unbiased estimator of $p$. It might be worth noting here that binomial distribution being discrete, any median unbiased estimator has to be randomized; see page 74 of~\cite{pfanzagl2011parametric} for a discussion. The exact median unbiased estimator of \citet{pfanzagl1970median,pfanzagl1972median} is computationally intensive. A simpler estimator for $p$ with reduced median bias can be obtained from~\cite{hirji1989median}, and~\cite{kenne2017median}. These works discuss binary regression and estimating a binomial proportion is the special case when there are no regressors except for an intercept.
\cite{hamza1995smallest} also proves that~\eqref{eq:binomial-median-bias} holds true for $\mbox{Binom}(m,p)$ replaced by $\mbox{Poisson}(m\lambda)$. This implies that the confidence interval $\widehat{\mathrm{CI}}_{\alpha,0}$ from Algorithm~\ref{alg:confidence-known-Delta} inflated by $\log(2)/m$ also has a finite sample coverage {\color{black}of at least $1-\alpha$} for every $\lambda \ge 0$. 
\subsection{Exponential families}
{\color{black}\citet[Sections 3.5 and 5.4]{Lehmann1959} provide median unbiased estimators in monotone likelihood ratio and exponential families with a Lebesgue density. 
Extending this work,} \cite{pfanzagl1979optimal} provides an algorithm to construct an exactly median unbiased estimator for every sample size in a full rank exponential family, even in the presence of nuisance parameters. \cite{pfanzagl1979optimal} considers a more general parametric model than exponential families; see~\cite{read2004median}. A related result for exponential families is also obtained in~\cite{brown1976complete}. For brevity, we will not describe this algorithm here and refer to the papers mentioned above; also, see~\cite{cabrera1997simulation} for some computational methods. With such an estimator, the \hulc can be applied with $\Delta = 0$ to obtain a finite sample valid confidence interval.
\subsection{Squared mean estimation}\label{subsec:mean-square}
Suppose $X_1, \ldots, X_n$ are independent random variables with common mean $\mu$ and common variance $\sigma^2 < \infty$. Consider the estimation of $\theta_0 = \mu^2$. A natural estimator of $\theta_0$ is $\widetilde{\theta} = \widebar{X}_n^2$, the square of the sample mean. The asymptotic distribution of $\widetilde{\theta}$ depends on the true mean and the population variance:
\begin{equation}\label{eq:mean-square-distribution}
\begin{split}
n^{1/2}(\widetilde{\theta} - \theta_0) ~&\overset{d}{\to}~ N(0,4\mu^2\sigma^2),\quad\mbox{if }\theta_0 = \mu^2 \neq 0,
\quad\mbox{and}\quad n(\widetilde{\theta} - \theta_0) ~\overset{d}{\to}~ \sigma^2\rchi^2_1, \quad\mbox{if }\theta_0 = \mu^2 = 0.
\end{split}
\end{equation}
There are two aspects to consider here. First, the rate of convergence changes from $n^{-1/2}$ to $n^{-1}$ as $\mu$ changes from non-zero to zero. Second, the limiting distribution of $\widetilde{\theta}$ becomes one-sided for $\mu = 0$ and this implies that the estimator has an asymptotic median bias of $1/2$ for $\mu = 0$. The first aspect is not an issue for Algorithm~\ref{alg:confidence-known-Delta}, but the second aspect renders Algorithm~\ref{alg:confidence-known-Delta} useless for $\mu$ close to zero because it would require nearly infinite many splits of the data. Alternatively, for each subset $S_j$ of the data, consider the $U$-statistic estimator
\begin{equation}\label{eq:U-stat-prod}
\widehat{\theta}_j = \frac{1}{|S_j|(|S_j| - 1)}\sum_{i\neq k\in S_j} X_iX_k.
\end{equation}
It readily follows that $\mathbb{E}[\widehat{\theta}_j] = \mu^2$ for all $\mu\in\mathbb{R}$, unlike $\widebar{X}_n^2$ which is biased for $\mu$ close to zero. Once again $\widehat{\theta}_j$ has different limiting distributions depending the magnitude of $\mu$. Specifically,
\begin{equation}\label{eq:mean-square-distribution-U-stat}
\begin{split}
\sqrt{|S_j|}(\widehat{\theta}_j - \theta_0) ~&\overset{d}{\to}~ N(0,4\mu^2\sigma^2),\quad\mbox{if }\theta_0 = \mu^2 \neq 0,\\
|S_j|(\widehat{\theta}_j - \theta_0) ~&\overset{d}{\to}~ \sigma^2(\rchi^2_1 - 1), \quad\mbox{if }\theta_0 = \mu^2 = 0.
\end{split}
\end{equation}
The rate of convergence changes between $\mu \neq 0$ and $\mu = 0$, but now the limiting distribution has median bias bounded away from zero. It may be worth pointing out that the limiting distribution in general would be a mixture of normal and Chi-square as $\mu$ becomes close to zero. We can prove the following result on the median bias of $\widehat{\theta}_j$ that is uniform over all $\mu\in\mathbb{R}$. The proof in Section~\ref{appsec:median-bias-mean-square} can be easily extended to accommodate non-identically distributed observations expect for common mean and variance.
\begin{prop}\label{prop:median-bias-mean-square}
Suppose $\xi_i = (X_i - \mu)/\sigma, 1\le i\le n$ are independent and identically distributed. Then for any $\mu$ and $\sigma$, the median bias of $\widehat{\theta}_j$ is bounded by
\begin{equation}\label{eq:median-bias-square-mean}
\begin{split}
&\sup_{\theta\in\mathbb{R}}\left|\frac{1}{2} - \left\{\Phi\left(\frac{-\theta + \sqrt{\theta^2 + |S_j|/(|S_j|-1)^2}}{\sqrt{|S_j|}/(|S_j|-1)}\right) - \Phi\left(\frac{-\theta-\sqrt{\theta^2 + |S_j|/(|S_j|-1)^2}}{\sqrt{|S_j|}/(|S_j|-1)}\right)\right\}\right|\\ 
&\quad+ \sqrt{\frac{4\mathbb{E}[\xi_1^4]\log(|S_j|)}{|S_j|\pi}} + \frac{\mathbb{E}[|\xi_1|^3] + \mathbb{E}[|\xi_1|^6]/(\mathbb{E}[\xi_1^4])^{3/2}}{\sqrt{|S_j|}} + \frac{2}{|S_j|}.
\end{split}
\end{equation}
\end{prop}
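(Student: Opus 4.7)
Set $n := |S_j|$, $\xi_i := (X_i - \mu)/\sigma$ (so $\mathbb E\xi_i = 0$ and $\mathbb E\xi_i^2 = 1$), $Z := n^{-1/2}\sum_{i\in S_j}\xi_i$, $V := n^{-1}\sum_{i\in S_j}\xi_i^2$, and $\tilde\theta := \mu(n-1)/(\sigma\sqrt n)$. Expanding $\widehat\theta_j$ via the Hoeffding decomposition of the $U$-statistic and collecting linear and quadratic pieces yields the identity
\[\tfrac{n-1}{\sigma^2}(\widehat\theta_j - \mu^2) \;=\; Z^2 + 2\tilde\theta\, Z - V,\]
and solving the quadratic in $Z$ gives
\[\{\widehat\theta_j \le \mu^2\} \;=\; \bigl\{Z \in [-\tilde\theta - \sqrt{\tilde\theta^2 + V},\; -\tilde\theta + \sqrt{\tilde\theta^2 + V}]\bigr\}.\]

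Let $p := \mathbb P(\widehat\theta_j \le \mu^2)$; in the continuous case the median bias equals $|p - \tfrac12|$. Define $p^\star$ as the probability of the event above when $V$ is replaced by its mean $1$ and the law of $Z$ by a standard Normal. Under the reparametrization $\theta = \mu/\sigma$ (so $\tilde\theta = \theta(n-1)/\sqrt n$), a direct calculation shows that $p^\star$ is exactly the Normal-CDF expression on the right-hand side of~\eqref{eq:median-bias-square-mean}. By the triangle inequality $|p - \tfrac12| \le |p^\star - \tfrac12| + |p - p^\star|$, so the first piece, taken as a supremum over $\theta \in \mathbb R$, already produces the first summand of the bound.

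The error $|p - p^\star|$ splits into (i) replacing the CDF of $Z$ by $\Phi$, handled by Berry--Esseen at the two endpoints $r_\pm := -\tilde\theta \pm \sqrt{\tilde\theta^2 + 1}$ and contributing the $\mathbb E|\xi_1|^3/\sqrt n$ piece of the third summand, and (ii) replacing the random $V$ by $1$. For (ii), the Lipschitz bound $|\sqrt{\tilde\theta^2 + V} - \sqrt{\tilde\theta^2 + 1}| \le |V-1|/(2\sqrt{\tilde\theta^2+1}) \le |V-1|/2$ shows that the symmetric difference of the two intervals is contained in two $|V-1|/2$-neighborhoods, so the (ii)-contribution is at most $\mathbb P(|Z - r_+| \le |V-1|/2) + \mathbb P(|Z - r_-| \le |V-1|/2)$. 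For each term, pick a deterministic threshold $\epsilon > 0$ and split at $\{|V-1| \le \epsilon\}$: the small-$|V-1|$ piece is bounded by $\mathbb P(|Z - r_\pm| \le \epsilon/2) \le \epsilon/\sqrt{2\pi} + O(\mathbb E|\xi_1|^3/\sqrt n)$ via the Gaussian density bound plus Berry--Esseen, while for the tail, applying Berry--Esseen to the standardized sum $\sqrt n(V-1)/\sqrt{\mathbb E\xi_1^4 - 1}$, combined with $\mathbb E|\xi_1^2-1|^3 \le C\,\mathbb E|\xi_1|^6$ and $(\mathbb E\xi_1^4-1)^{3/2} \ge c\,(\mathbb E\xi_1^4)^{3/2}$, yields
\[\mathbb P(|V-1| > \epsilon) \;\le\; 2\bigl(1 - \Phi\bigl(\sqrt n\,\epsilon/\sqrt{\mathbb E\xi_1^4 - 1}\bigr)\bigr) + C'\,\mathbb E|\xi_1|^6/((\mathbb E\xi_1^4)^{3/2}\sqrt n).\]
Setting $\epsilon = \sqrt{2\,\mathbb E\xi_1^4\, \log n / n}$ pushes the Gaussian tail down to $\le 2e^{-\log n} = 2/n$ (the fourth summand) and produces the dominant contribution $2\epsilon/\sqrt{2\pi} = \sqrt{4\,\mathbb E\xi_1^4\, \log n/(\pi n)}$ (the second summand); the Berry--Esseen remainders from (i) and from the $V$-tail combine into the $\mathbb E|\xi_1|^6/((\mathbb E\xi_1^4)^{3/2}\sqrt n)$ piece of the third summand.

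The main obstacle is the dependence between $Z$ and $V$: both are built from the same $\xi_i$'s, so $\mathbb P(|Z - r_\pm| \le |V-1|/2)$ cannot be factored by conditioning on $V$ in any tractable way. The deterministic truncation at scale $\sqrt{\log n/n}$ is precisely what decouples the two statistics without ever conditioning, at the price of the logarithmic factor that appears inside the square root in the second summand.
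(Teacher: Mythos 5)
Your proposal is correct and follows essentially the same route as the paper's proof: the same algebraic identity $\tfrac{n-1}{\sigma^2}(\widehat\theta_j-\mu^2)=Z^2+2\tilde\theta Z-V$, a Berry--Esseen bound for $Z$, a Berry--Esseen-based truncation of $V-1$ at scale $\sqrt{\mathbb{E}[\xi_1^4]\log n/n}$ yielding the $2/n$ term, and Gaussian anti-concentration at the interval endpoints yielding the $\sqrt{\log n/n}$ term, with only a cosmetic difference in organization (you compare the random interval for $Z$ to its deterministic version, whereas the paper first moves the $V$-fluctuation into a deterministic shift $c$ of the quadratic event before solving it). Your step $(\mathbb{E}[\xi_1^4]-1)^{3/2}\ge c\,(\mathbb{E}[\xi_1^4])^{3/2}$ is not literally true when $\mathbb{E}[\xi_1^4]$ is close to $1$, but the paper's own proof makes the same normalization leap by treating $\mathrm{Var}(\xi_1^2)$ as $\mathbb{E}[\xi_1^4]$, so this is a shared imprecision rather than a gap specific to your argument.
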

Note that the first term on the right hand side of~\eqref{eq:median-bias-square-mean} can be computed given $|S_j|$ without the knowledge of $\mu$ and $\sigma$. Further, the last three terms of~\eqref{eq:median-bias-square-mean} all disappear as $|S_j|\to\infty$ whenever certain moments of $\xi_1$ are bounded away from $0$ and $\infty$. Exact computation for some sample sizes $(|S_j|)$ shows that the supremum in the first term in~\eqref{eq:median-bias-square-mean} is attained at $\theta = 0$ and equals $|\mathbb{P}(\rchi^2_1 \le 1) - 1/2|\approx 0.183$. Hence, Algorithm~\ref{alg:confidence-known-Delta} can be applied with $\Delta = |\mathbb{P}(\rchi^2_1 \le 1) - 1/2|$ and the estimator $\widehat{\theta}_j$ to attain an uniformly valid asymptotic confidence interval for $\theta_0 = \mu^2$. It is worth pointing out that the resulting confidence interval is adaptive in its width as $\mu$ approaches zero, i.e. the expected width of the \hulc interval scales as $n^{-1}$ when $\mu$ is close to 0, and as $n^{-1/2}$ when $\mu$ is large. 
This follows from the fact that $\widehat{\theta}_j$ has an adaptive rate of convergence.

{\color{black}With 100 observations from $N(\mu, 1)$ and varying $\mu$, Figure~\ref{fig:mean-square-subsampling} shows the performance of several confidence intervals for $\theta_0 = \mu^2$ as $\mu$ changes from $0$ to $1.0$. Note that in this problem, subsampling is not readily applicable because the rate of convergence is unknown (as it depends on $\mu$). We use subsampling with estimated rate of convergence from~\cite{bertail1999subsampling}.
\begin{figure}
\centering
\includegraphics[width=\textwidth]{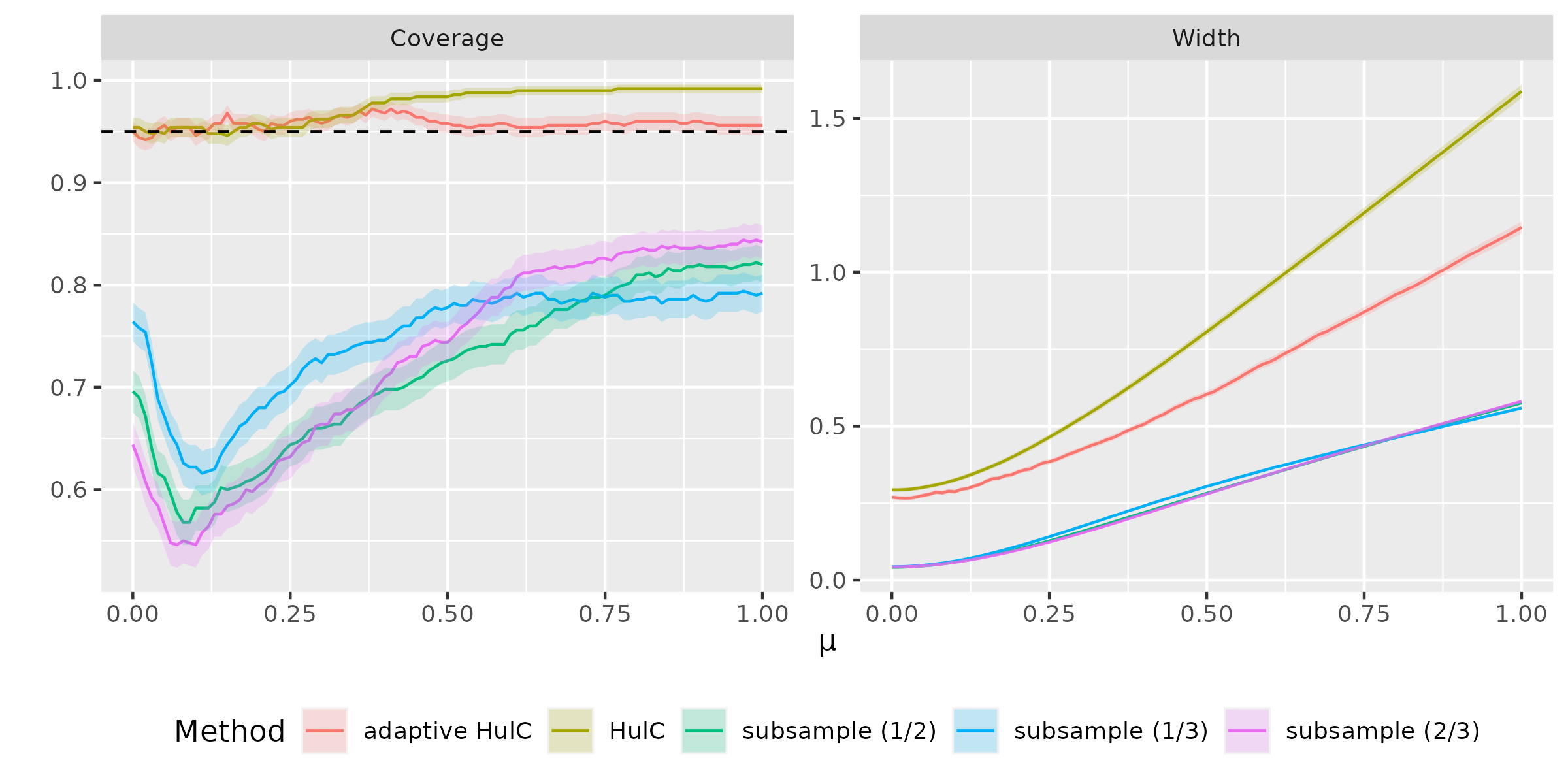}
\vspace{-0.4in}
\caption{The plots show the comparison of HulC (with $\Delta = 0.183$), adaptive HulC (discussed in Section~\ref{sec:adaptive-convex-hull}), and subsampling with different subsample sizes ``subsample (1/3)'' (subsample size of $n^{1/3}$), ``subsample (1/2)'' (subsample size of $n^{1/2}$), ``subsample (2/3)'' (subsample size of $n^{2/3}$). Although the performance of subsampling is better when $\mu = 0$ and when $\mu$ is away from zero, it is not as satisfactory as that of HulC and adaptive HulC.}
\label{fig:mean-square-subsampling}
\end{figure}
}
\subsection{Uniform model}\label{subsec:uniform-model}
Suppose $X_1, \ldots, X_n$ are independent real-valued random variables from $U[0, \theta_0]$, the uniform distribution on $[0,\theta_0]$. The maximum likelihood estimator of $\theta_0$ {\color{black}from the $j$-th batch} is given by $\widetilde{\theta}_j = \max\{X_i:\,i\in S_j\}$ which is both mean and median biased. The median bias is $1/2$ and hence Algorithm~\ref{alg:confidence-known-Delta} would be inapplicable because it requires infinitely many splits of the data. Note that in this model, $\widetilde{\theta}_j$ converges to $\theta_0$ at an $n^{-1}$ rate.

Interestingly, there are estimators of $\theta_0$ that are median unbiased in this case. For instance, with $\widehat{b}_{S_j}$ and $\widehat{a}_{S_{j}}$ representing the largest and the second largest values in $\{X_i:\,i\in S_j\}$, it can be shown that the estimator $\widehat{\theta}_j = 2\widehat{b}_{S_j} - \widehat{a}_{S_j}$ is finite sample median unbiased for $\theta_0$; see Section 3 of~\cite{robson1964estimation} for a proof. Hence, Algorithm~\ref{alg:confidence-known-Delta} can be applied with $\widehat{\theta}_j$ and $\Delta = 0$ to obtain a finite-sample valid confidence set for $\theta_0$. Note that $\widehat{\theta}_j$ also has an $n^{-1}$ rate of convergence. In this case, it is known that the classical bootstrap is invalid, but subsampling works; see e.g.,~\cite{politis1994large} and~\cite{loh1984estimating}.

The estimator $\widehat{\theta}_j$ described above is approximately median unbiased for a large class of distributions of the form $F(x)/F(\theta_0)$ for $x\in[0,\theta_0]$; see~\citet[Section 3]{robson1964estimation}. {\color{black}Here $F(\cdot)$ is an arbitrary distribution function on $[0, \infty)$.} Also, see~\cite{hall1982estimating} for other estimators of $\theta_0$, in a large class of non-parametric distributions, that have a limiting distribution that is symmetric around $\theta_0$.
\subsection{Constrained Estimation}\label{subsec:constrained-estimation}
Suppose $X_1, \ldots, X_n$ are independent real-valued random variables with mean $\mu$. Consider the estimation of $\theta_0 = \mu\mathbbm{1}\{\mu \ge 0\}$. We have seen in Section~\ref{subsec:mean-estimation} how to apply the \hulc for $\mu$. Although $\mu\mapsto\mu\mathbbm{1}\{\mu\ge0\}$ is a simple transformation, it changes the behavior of many commonly used estimators of $\theta_0$. This is because $\theta_0$ is a non-regular functional and hence, there does not exist any regular estimator for $\theta_0$; this follows from~\citet[Theorem 2]{hirano2012impossibility}. The implication is that classical Wald confidence intervals based on the estimator $\widebar{X}_n\mathbbm{1}\{\widebar{X}_n \ge 0\}$ can fail to cover $\theta_0$ for $\mu$ close to zero~\citep[Appendix 1.1]{robins2004optimal}. Further, bootstrap and subsampling are also similarly inconsistent; see~\citet[Section 3.2]{fang2019inference} and~\citet[Section 3]{andrews2000inconsistency} for bootstrap, and~\citet[Eq. (1)--(2)]{andrews2010asymptotic} for subsampling. It is, however, easy to show that the estimator $\widebar{X}_n\mathbbm{1}\{\widebar{X}_n \ge 0\}$ is asymptotically median unbiased for $\theta_0 = \mu\mathbbm{1}\{\mu\ge0\}$ because $\widebar{X}_n$ is asymptotically median unbiased for $\mu$. This follows simply from the fact that $\kappa(t) = t\mathbbm{1}\{t \ge 0\}$ is monotonic in $\mu$ and hence,
\begin{equation}\label{eq:determinisitc-inequality-constrained}
\mathbbm{1}\{\kappa(\widebar{X}_n) \ge \kappa(\mu)\} \ge \mathbbm{1}\{\widebar{X}_n \ge \mu\}\quad\mbox{and}\quad \mathbbm{1}\{\kappa(\widebar{X}_n) \le \kappa(\mu)\} \ge \mathbbm{1}\{\widebar{X}_n \le \mu\}.
\end{equation}
Note that $\kappa(\cdot)$ is not strictly increasing. This implies that
\begin{equation}\label{eq:probabilistic-inequality-constrained}
\mbox{Med-bias}_{\kappa(\mu)}(\kappa(\widebar{X}_n)) ~\le~ \mbox{Med-bias}_{\mu}(\widebar{X}_n).
\end{equation}
Hence, the \hulc with the estimator $\kappa(\widebar{X}_n)$ and $\Delta = 0$ yields a second-order accurate confidence interval for $\kappa(\mu)=\mu\mathbbm{1}\{\mu\ge0\}$. 

Inequalities~\eqref{eq:determinisitc-inequality-constrained} and~\eqref{eq:probabilistic-inequality-constrained} do not require the specific form of the function $\kappa(\cdot)$. They hold for any monotone function $\kappa(\cdot)$ and, in particular, for any piecewise constant function. Theorem 3.2 of~\cite{fang2019inference} implies that bootstrap is inconsistent unless $\kappa(\cdot)$ is differentiable. This shows the wide range of applicability of our confidence interval. Finally, we note that projection to \emph{any} set on the real line is a monotone function and hence our confidence interval from the \hulc is asymptotically valid with the natural estimator that projects the MLE (or any other estimator) to the constraint set. A simple example where this is useful is in the squared mean estimation example of Section~\ref{subsec:mean-square}. The estimator $\widehat{\theta}_j$ in~\eqref{eq:U-stat-prod} is not necessarily non-negative, but its target $\mu^2$ is always non-negative. Using the facts discussed here, we can safely use $\widehat{\theta}_j\mathbbm{1}\{\widehat{\theta}_j \ge 0\}$ instead of $\widehat{\theta}_j$ in the squared mean estimation example.
\subsection{Matching Estimators}
In causal inference, matching estimators for the average treatment effect (ATE) are popular, partly because they are intuitive. Under certain regularity conditions, matching estimators are known to be asymptotically normal centered at the true ATE. Hence, the \hulc with $\Delta = 0$ yields a second order accurate confidence interval for ATE. \cite{abadie2008failure} proved that the bootstrap is inconsistent for matching estimators. They also commented that subsampling can still be used, but given the computational cost of matching, subsampling becomes computationally intensive with larger samples. 
\subsection{Semiparametric Estimation}
In all the examples above, we have cases where the bootstrap and subsampling are either not easily applicable or fail to provide an asymptotically valid confidence interval. There are many cases where all the usual methods apply but the \hulc is much simpler and computationally cheaper.

In non- and semi-parametric problems, when a functional of interest can be estimated by an estimator that is asymptotically normal, two possibilities arise. In the simpler case, the estimator is regular and asymptotically linear with a known (or easily estimable) 
influence function, while in general the estimator may not have a simple asymptotic expansion. In the first case, we may estimate the asymptotic variance consistently via the sample variance of the (estimated) influence function but obtaining finite-sample guarantees (say via a Berry-Esseen bound) typically requires a case-by-case analysis. More generally however, 
the variance often involves more nuisance (non-parametric) components than the functional and hence, variance estimation often requires more assumptions or regularity conditions than estimation of the functional. 
On the other hand, the \hulc requires no more nuisance estimation than required for the estimation of the functional. We give three simple examples to illustrate this.
\begin{enumerate}
	\item \textbf{Integral functionals of density.} Consider the estimation of 
	$\theta_0 = \int \phi(f(x), f'(x), \ldots, f^{(k)}(x), x)dx,$
	when $X_1, \ldots, X_n$ are independent and identically distributed observations {\color{black}with a common marginal density} $f$ supported on a compact set. Theorem 2 of~\cite{laurent1997estimation} provides an asymptotically efficient estimator $\widehat{\theta}_n$ for $\theta_0$ that is asymptotically normal under certain smoothness assumptions on $f$. The asymptotic variance of $\widehat{\theta}_n$, however, involves higher order $(\ge k)$ derivatives of $f$ when $k\ge1$. For a more concrete example, consider the Fisher information {\color{black}functional defined as} $\theta_0 = \int_{-\pi}^{\pi} (f'(x))^2/f(x)dx$. The asymptotic variance of the efficient estimator is given by $\int_{-\pi}^{\pi} (2f^{(2)}(x)/f(x) - (f'(x))^2/f(x))^2f(x)dx - (\int_{-\pi}^{\pi} (f'(x))^2/f(x))dx$, which involves the second derivative of $f$. 
	\item \textbf{Single Index Model.} Suppose $(X_i, Y_i), 1\le i\le n$ are independent observations satisfying $\mathbb{E}[Y_i|X_i] = m_0(\theta_0^{\top}X_i)$ when $m_0(\cdot)$ is an unknown convex function. Consider the least squares estimator $(\widehat{m},\widehat{\theta})$ which is obtained as a minimizer of $\sum_{i=1}^n (Y_i - m(\theta^{\top}X_i))^2$ over $m$ that is convex, Lipschitz, and $\theta$ in $\{\eta:\|\eta\|_2 = 1\}$. \cite{kuchibhotla2021semiparametric} prove that $\widehat{\theta}$ is asymptotically normal with an asymptotic variance depending on nuisance components such as the conditional mean of $X$ on $\theta_0^{\top}X$, the derivative of $m_0$, and the conditional variance of $Y$ given $X$.
	\item \textbf{Functionals of Normal Models.} Suppose $X_1, \ldots, X_n$ are independent observations from $N(\mu,\Sigma)$ in the space $E$ (either a Hilbert or a Banach space). Consider the estimation of $\theta_0 = f(\mu)$, if $E$ is Banach or $\theta_0 = f(\mu,\Sigma)$, if $E$ is Hilbert. \cite{koltchinskii2019estimation,koltchinskii2021efficient} provide asymptotically efficient estimators of $\theta_0$ which have a normal limiting distribution. Also, see~\cite{koltchinskii2020estimation}. The asymptotic variance is $\langle \Sigma f'(\mu), f'(\mu)\rangle$ if $\theta_0 = f(\mu)$ and is $\|\Sigma^{1/2}f_{\mu}'(\mu,\Sigma)\|^2 + 2\|\Sigma^{1/2}f_{\Sigma}'(\mu,\Sigma)\Sigma^{1/2}\|_{op}^2$ if $\theta_0 = f(\mu, \Sigma)$. Estimating the asymptotic variance hence requires estimating more complicated functionals of $\mu, \Sigma$. Such variance estimation is not discussed in these works.  
\end{enumerate}
In all of these cases our approach using the \hulc yields a conceptually simpler confidence interval without any additional nuisance estimation.

\section{\ahulcnospace}\label{sec:adaptive-convex-hull}
In this section, we provide a method, the \ahulcnospace, to estimate $\Delta$ based on subsampling~\citep{politis1994large} and consequently, provide a simple method for constructing a valid confidence interval. One might wonder at this point ``why not just use subsampling to construct the confidence interval directly?'' The answer is that the \ahulc does not require the knowledge of the rate of convergence of the estimator. As an example, in the mean estimation case with fewer than two finite moments, we do not know the rate of convergence a priori. Further, we do not need to estimate the rate of convergence as suggested in~\cite{bertail1999subsampling} for subsampling. 

We return now to the univariate parameter setting. Suppose $r_{|S_j|}(\widehat{\theta}_j - \theta_0)$ converges in distribution to $W$ a continuous random variables as $|S_j|\to\infty${\color{black}, for some sequence $\{r_k\}_{k\ge1}$ diverging to $\infty$}. Then it follows that
\[
\mbox{Med-bias}_{\theta_0}(\widehat{\theta}_j) ~\to~ \Delta := \left|\mathbb{P}(W \le 0) - \frac{1}{2}\right|,\quad\mbox{as}\quad |S_j|\to\infty.
\]
Hence, $\Delta$ is the asymptotic median bias and can be estimated using subsampling. Let $S_1^{(b)}, \ldots, S_{K_n}^{(b)}$ denote $K$ random subsamples of size $b = b(n)$ and let $\widehat{\theta}_j^{(b)}, 1\le j\le K_n$ be the estimates based on the subsamples. Let $\widehat{\theta}$ be the estimate based on the full data (of size $n$). Then $\Delta$ can be estimated by
\begin{equation}\label{eq:subsampling-median-bias-distribution}
\widehat{\Delta}_n := |L_n(0) - 1/2|,\quad\mbox{where}\quad L_n(0) := \frac{1}{K_n}\sum_{j=1}^{K_n}\mathbbm{1}\{\widehat{\theta}_j^{(b)} - \widehat{\theta} \le 0\}.
\end{equation}
Given this estimator $\widehat{\Delta}_n$, we can estimate the {\color{black}upper bound on the} miscoverage probability $P(B; \Delta)$ in~\eqref{eq:miscoverage-function-B-Delta} of the convex hull of $B$ estimators by $P(B; \widehat{\Delta}_n)$. 
The results of~\cite{politis1994large} imply that $\widehat{\Delta}_n$ is (asymptotically) consistent for $\Delta$ (see also, our Lemma~\ref{lem:subsampling-delta-control}, which develops finite-sample bounds)
and hence, $B_{\alpha,\Delta} = B_{\alpha,\widehat{\Delta}_n}$ for large enough $n$; see Proposition~\ref{prop:approximate-to-exact-validity}. Therefore, the convex hull based on $B_{\alpha,\widehat{\Delta}_n}$ estimators has an asymptotic miscoverage probability of at most $\alpha$. To avoid conservativeness, one can randomize the number of estimators between $B_{\alpha,\widehat{\Delta}_n}$ and $B_{\alpha,\widehat{\Delta}_n} - 1$ to attain asymptotically exact coverage as shown in Algorithm~\ref{alg:confidence-unknown-Delta}. {\color{black}In other words, the output of Algorithm~\ref{alg:confidence-known-Delta} with $\Delta = \widehat{\Delta}_n$ is the \ahulc interval denoted by $\widehat{\mathrm{CI}}_{\alpha}^{\mathrm{sub}}.$}

\begin{algorithm}[t]
    \caption{Adaptive Confidence Interval with Unknown Median Bias (\ahulcnospace)}
    \label{alg:confidence-unknown-Delta}
    \SetAlgoLined
    \SetEndCharOfAlgoLine{}
    \KwIn{data $X_1, \ldots, X_n$, coverage probability $1 - \alpha$, and an estimation procedure $\mathcal{A}(\cdot)$ that takes as input observations and returns an estimator, subsample size $b$, number of subsamples $K_n$.}
    \KwOut{A confidence interval $\widehat{\mathrm{CI}}_{\alpha}^{\mathrm{sub}}$ such that $\mathbb{P}(\theta_0\in\widehat{\mathrm{CI}}_{\alpha}^{\mathrm{sub}}) \ge 1 - \alpha$ (asymptotically).}
    Draw $K_n$ many subsamples of size $b$ from $X_1, \ldots, X_n$. Apply $\mathcal{A}(\cdot)$ for each subsample and obtain estimators $\widehat{\theta}_b^{(j)}, 1\le j\le K_n$.\;
    Compute the estimator of the (asymptotic) median bias of $\mathcal{A}(\cdot)$ as 
	$\widehat{\Delta}_n := |L_n(0) - {1}/{2}|.$\;
	Use Algorithm~\ref{alg:confidence-known-Delta} with input data $X_1, \ldots, X_n$, coverage probability $1 - \alpha$, the value $\widehat{\Delta}_n$ and the estimation procedure $\mathcal{A}(\cdot)$\;
	\Return the confidence interval obtained as output from Algorithm~\ref{alg:confidence-known-Delta} as $\widehat{\mathrm{CI}}_{\alpha}^{\mathrm{sub}}$.
\end{algorithm}
We now prove bounds on the miscoverage probabilities of the confidence intervals of the \ahulcnospace. The first result provides a bound without using the fact that $\widehat{\Delta}_n$ is obtained from subsampling and then using distributional convergence assumptions, we obtain the final miscoverage bound for $\widehat{\mathrm{CI}}_{\alpha}^{\mathrm{sub}}$. Define ${\Delta}_{n,\alpha}$ as the median bias of $\mathcal{A}(\{X_i:\,i\in S\})$ with ${n}/{(2B_{\alpha,\Delta})}\le |S| \le 2n/B_{\alpha,\Delta}$, i.e.,
\[
{\Delta}_{n,\alpha} ~:=~ \max_{1/2 \le B_{\alpha,\Delta}|S|/n \le 2}\,\mbox{Med-bias}_{\theta_0}(\mathcal{A}(\{X_i:\,i\in S\})).
\]
{\color{black}Note that $\Delta_{n,\alpha}$ in general depends also on $\theta_0$ and the true distribution of the data.}

Consider the following assumption:
\begin{enumerate}[label = \bf(A\arabic*)]
  \item There exists a random variable $W$, a non-decreasing sequence $\{r_m\}_{m\ge1}$, and a non-increasing sequence $\{\delta_m\}_{m\ge1}$ converging to zero such that the estimator $\widehat{\theta}^{(m)}$ obtained by applying $\mathcal{A}(\cdot)$ on $m$ observations satisfies\label{eq:limiting-distribution}
  \[
  \sup_{t\in\mathbb{R}}\left|\mathbb{P}(r_m(\widehat{\theta}^{(m)} - \theta_0) \le t) - \mathbb{P}(W \le t)\right| \le \delta_m.
  \] 
  Further, $0 < \mathbb{P}(W \le 0) < 1$.
\end{enumerate} 
Define the asymptotic median bias of the estimation procedure $\mathcal{A}(\cdot)$ as
$\Delta := \left(1/2 - \max\{\mathbb{P}(W \le 0), \mathbb{P}(W \ge 0)\}\right)_+.$
For any $\alpha\in(0, 1)$ and $\Delta \in (0, 1/2)$, define
\[
C_{\alpha,\Delta} := \frac{1}{2}\left[\min\left\{\left(\frac{\alpha}{P(B_{\alpha,\Delta};\Delta)}\right)^{1/B_{\alpha,\Delta}},\,\left(\frac{P(B_{\alpha,\Delta} - 1; \Delta)}{\alpha}\right)^{1/B_{\alpha,\Delta}}\right\} - 1\right],
\]
and for $\Delta = 0$,
\[
C_{\alpha,0} := \frac{1}{B_{\alpha,0}}W_0\left(\frac{1}{\sqrt{2}}\sqrt{\frac{\alpha}{P(B_{\alpha,0}; 0)} - 1}\right).
\]
These quantities are taken from Proposition~\ref{prop:approximate-to-exact-validity} and~\eqref{eq:relaxed-requirement-median-unbiased} which implies that for any $\gamma\in(0, 1/2)$, if $|\gamma - \Delta| \le C_{\alpha,\Delta}$, then $B_{\alpha,\gamma} = B_{\alpha,\Delta}$. Finally, recall that $\widehat{\mathrm{CI}}_{\alpha,\Delta}$ represents the confidence interval returned by the \hulc when the median bias parameter is chosen to be $\Delta$ (irrespective of what the true finite sample median bias is).

To succinctly state our next result we define some additional quantities.
Given an estimate $\widehat{\Delta}_n$ we compute the number of splits, $B_{\alpha,\widehat{\Delta}_n}$. 
We then hypothesize splitting the data twice into $B_{\alpha,\widehat{\Delta}_n}$ and $B_{\alpha,\widehat{\Delta}_n} - 1$ parts with approximately equal number of observations in each split. We then define,
  \[
  \widehat{\mathrm{CI}}_{\alpha}^{(0)} := \left[\min_{1\le j\le B_{\alpha,\widehat{\Delta}_n} - 1}\widehat{\theta}_j, \max_{1\le j\le B_{\alpha,\widehat{\Delta}_n} - 1}\widehat{\theta}_j\right],\quad\mbox{and}\quad \widehat{\mathrm{CI}}_{\alpha}^{(1)} := \left[\min_{1\le j\le B_{\alpha,\widehat{\Delta}_n}}\widehat{\theta}_j, \max_{1\le j\le B_{\alpha,\widehat{\Delta}_n}}\widehat{\theta}_j\right].
  \]
  Here $\widehat{\theta}_j$ are estimators computed based on $\mathcal{A}(\cdot)$. We have the following result:

\begin{thm}\label{thm:subsampling-hull-validity}
Suppose the random variables $X_1, \ldots, X_n$ are independent and assumption~\ref{eq:limiting-distribution} holds true. Then for any $\alpha\in(0, 1)$, the \ahulc confidence intervals satisfy
\begin{equation}\label{eq:fixed-choice-B-unknown-Delta}
\begin{split}
\max\left\{\frac{\mathbb{P}(\theta_0 \notin \widehat{\mathrm{CI}}_{\alpha}^{(0)})}{2},\,\mathbb{P}(\theta_0 \notin \widehat{\mathrm{CI}}_{\alpha}^{(1)})\right\} &\le \mathbb{P}(B_{\alpha,\widehat{\Delta}_n} \neq B_{\alpha,\Delta})
\\
&\quad
+ \alpha\times
\begin{cases}(1 + 2B_{\alpha,0}(B_{\alpha,0} - 1)\Delta_{n,\alpha}^2(1 + 2\Delta_{n,\alpha})^{B_{\alpha,0}}), &\mbox{if }\Delta = 0,\\
(1 + 2|\Delta_{n,\alpha} - \Delta|)^{B_{\alpha,\Delta}}, &\mbox{if }\Delta \neq 0\end{cases},
\end{split}
\end{equation}
and for any $0 \le \eta \le C_{\alpha,\Delta}$,
\begin{equation}\label{eq:random-choice-B-unknown-Delta}
\begin{split}
\left|\mathbb{P}(\theta_0 \notin \widehat{\mathrm{CI}}_{\alpha}^{\mathrm{sub}}) - \mathbb{P}(\theta_0 \notin \widehat{\mathrm{CI}}_{\alpha,\Delta})\right| ~&\le~ 2\mathbb{P}(|\widehat{\Delta}_n - \Delta| \ge \eta)\\ 
&\quad+ 3\alpha\times
\begin{cases}
2\eta^2B_{\alpha,0}^2e^{2\eta B_{\alpha,0}}(1 + 2B_{\alpha,0}^2\Delta_{n,\alpha}^2(1 + 2\Delta_{n,\alpha})^{B_{\alpha,0}}), &\mbox{if }\Delta = 0,\\
2\sqrt{e}\eta B_{\alpha,\Delta}(1 + 2|\Delta_{n,\alpha}-\Delta|)^{B_{\alpha,\Delta}}/(1/2 - \Delta), &\mbox{if }\Delta \neq 0.
\end{cases}
\end{split}
\end{equation}
\end{thm}
Theorem~\ref{thm:subsampling-hull-validity} (proved in Section~\ref{appsec:subsampling-hull-validity}) provides a bound on miscoverage of the confidence interval $\widehat{\mathrm{CI}}_{\alpha}^{\mathrm{sub}}$ from Algorithm~\ref{alg:confidence-unknown-Delta} but does not assume that $\widehat{\Delta}_n$ is obtained from subsampling. 
The miscoverage probabilities of confidence intervals obtained from non-random choices of number of splits $\widehat{\mathrm{CI}}_{\alpha}^{(0)}$ and $\widehat{\mathrm{CI}}_{\alpha}^{(1)}$ only requires controlling the probability of $B_{\alpha,\widehat{\Delta}_n} \neq B_{\alpha,\Delta}$. From Proposition~\ref{prop:approximate-to-exact-validity}, it follows that we do not need $\widehat{\Delta}_n$ to be consistent for $\Delta$. Note that the second term in~\eqref{eq:fixed-choice-B-unknown-Delta} only depends on how close $\Delta_{n,\alpha}$ to $\Delta$ is. 

For the miscoverage probability of $\widehat{\mathrm{CI}}_{\alpha}^{\mathrm{sub}}$ that randomizes the number of splits to avoid overcoverage, we require consistency of $\widehat{\Delta}_n$ to $\Delta$. If $\widehat{\Delta}_n$ is obtained from an independent sample, then we would not require such consistency and can apply Theorem~\ref{thm:infinite-order-coverage} to prove miscoverage. Regarding inequality~\eqref{eq:random-choice-B-unknown-Delta}, we recall from Theorem~\ref{thm:infinite-order-coverage} (and Remark~\ref{rem:miscoverage-Delta}) that $\mathbb{P}(\theta_0 \notin \widehat{\mathrm{CI}}_{\alpha,\Delta})$ can be upper and lower bounded by quantities close to $\alpha$. 
Such lower bounds do not hold true for $\widehat{\mathrm{CI}}_{\alpha}^{(0)}$ and $\widehat{\mathrm{CI}}_{\alpha}^{(1)}$. Finally, because $\widehat{\mathrm{CI}}_{\alpha}^{\mathrm{sub}}$ is a random selection of one of $\widehat{\mathrm{CI}}_{\alpha}^{(0)}$ and $\widehat{\mathrm{CI}}_{\alpha}^{(1)}$, we get
\[
\mathbb{P}(\theta_0 \notin \widehat{\mathrm{CI}}_{\alpha}^{\mathrm{sub}}) \le \mathbb{P}(\theta_0 \notin \widehat{\mathrm{CI}}^{(0)}_{\alpha}) + \mathbb{P}(\theta_0 \notin \widehat{\mathrm{CI}}_{\alpha}^{(1)}),
\] 
and inequality~\eqref{eq:fixed-choice-B-unknown-Delta} can be used to imply that $\widehat{\mathrm{CI}}_{\alpha}^{\mathrm{sub}}$ has an approximate miscoverage probability of $3\alpha$ when $B_{\alpha,\widehat{\Delta}_n} = B_{\alpha,\Delta}$ holds with high probability.

In the multivariate case, one can apply union bound directly on~\eqref{eq:fixed-choice-B-unknown-Delta} with $\alpha$ replaced by $\alpha/d$ to obtain a bound on miscoverage. But using the proof, one can refine this by replacing  $\mathbb{P}(B_{\alpha,\widehat{\Delta}_n} \neq B_{\alpha,\Delta})$  by $\mathbb{P}(B_{\alpha,\widehat{\Delta}_n^{(k)}} \neq B_{\alpha,\Delta^{(k)}} \mbox{ for any } 1\le k\le d)$. Here $\Delta^{(k)}$ is the limiting median bias of the estimator of $k$-th coordinate of $\theta_0$ and $\widehat{\Delta}_n^{(k)}$ is its estimator. Because the second term on the right hand side of~\eqref{eq:fixed-choice-B-unknown-Delta} is multiplicative in $\alpha$, a union bound can be safely applied to obtain a non-trivial guarantee, as in Section~\ref{subsec:multivariate}. Similarly, one can replace the first term on the right hand side of~\eqref{eq:random-choice-B-unknown-Delta} with $\mathbb{P}(|\widehat{\Delta}_n^{(k)} - \Delta^{(k)}| \ge \eta\mbox{ for any }1\le k\le d)$. The second term being multiplicative in $\alpha$ does not affect the applicability of a union bound to obtain a non-trivial bound.

Inequality~\eqref{eq:random-choice-B-unknown-Delta} holds true for all $\eta\in[0, C_{\alpha,\Delta}]$. With $\widehat{\Delta}_n$ a consistent estimator for $\Delta$, one can take $\eta$ converging to zero with sample size. In the following, we will prove a bound on $\mathbb{P}(|\widehat{\Delta}_n - \Delta| \ge \eta)$ when $\widehat{\Delta}_n$ is obtained using subsampling (as in Algorithm~\ref{alg:confidence-unknown-Delta}). It is worth emphasizing that any method of estimating $\Delta$ can be used in Theorem~\ref{thm:subsampling-hull-validity}.

\begin{enumerate}[label = \bf(A\arabic*)]
\setcounter{enumi}{1}
  \item There exists $r^* > 0$ and $\mathfrak{C} < \infty$ such that the distribution function of $W$ satisfies\label{eq:continuity-distribution}
  \[
  0 \le \frac{\mathbb{P}(W \le t) - \mathbb{P}(W \le -t)}{t} \le \mathfrak{C},\quad\mbox{for all}\quad 0 \le t \le r^*.
  \]
  \item The subsample size $b$ satisfies $b/n\to0$ and $r_b/r_n\to0$ as $n\to\infty$. Further, the number of subsamples diverges: $K_n\to\infty$.\label{eq:subsample-size}
\end{enumerate}
These assumptions are similar to those used in the analysis of subsampling. In contrast to the classical analysis of subsampling by \citet{politis1994large} we provide a
finite sample analysis. 
\begin{lem}\label{lem:subsampling-delta-control}
Fix any $t > 0$ such that $r_bt/r_n \le r^*$, then under assumptions~\ref{eq:limiting-distribution},~\ref{eq:continuity-distribution}, and~\ref{eq:subsample-size} 
with probability at least $1 - 2\delta_n - (b + 1)/n - \mathbb{P}(|W| > t)$,
\[
|\widehat{\Delta}_n - \Delta| ~\le~ \sqrt{\frac{\log(2n)}{2K_n}} + \sqrt{\frac{\log(2n/b)}{2[n/b]}} + 2\delta_b + 2\mathfrak{C}\frac{r_bt}{r_n}.
\]
\end{lem}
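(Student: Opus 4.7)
The plan is to bound $|\widehat{\Delta}_n - \Delta|$ by $|L_n(0) - F_W(0)|$, where $F_W(s) := \mathbb{P}(W \le s)$, using the reverse triangle inequality for $x \mapsto |x - 1/2|$ (noting $\Delta = |F_W(0) - 1/2|$), and then to control $|L_n(0) - F_W(0)|$ by a two-step conditioning argument that separates a concentration piece from a bias piece.

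First, I would condition on the data $X_1, \ldots, X_n$. Because the subsample indices $S_1^{(b)}, \ldots, S_{K_n}^{(b)}$ are drawn i.i.d.\ uniformly over the $\binom{n}{b}$ possible subsets, the indicators $\mathbbm{1}\{\widehat{\theta}_j^{(b)} \le \widehat{\theta}\}$ are conditionally i.i.d.\ Bernoulli with mean
\[
p_n := \binom{n}{b}^{-1}\sum_{S}\mathbbm{1}\{\widehat{\theta}_S \le \widehat{\theta}\}.
\]
Hoeffding's inequality applied conditionally (hence unconditionally) yields $|L_n(0) - p_n| \le \sqrt{\log(2n)/(2K_n)}$ with probability at least $1 - 1/n$.

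Next, set $U := r_b(\widehat{\theta} - \theta_0)$ and introduce the U-statistic
\[
\hat G_n(s) := \binom{n}{b}^{-1}\sum_S \mathbbm{1}\{r_b(\widehat{\theta}_S - \theta_0) \le s\},
\]
so that $p_n = \hat G_n(U)$. By assumption~\ref{eq:limiting-distribution} applied with $m = n$, the event $A := \{|r_n(\widehat{\theta} - \theta_0)| \le t\}$ has probability at least $1 - \mathbb{P}(|W|>t) - 2\delta_n$. On $A$ we have $|U| \le r_b t/r_n \le r^*$, so by monotonicity of $\hat G_n$,
\[
\hat G_n(-r_b t/r_n) \;\le\; p_n \;\le\; \hat G_n(r_b t/r_n).
\]
Here $\hat G_n(s)$ is a bounded U-statistic of order $b$ with mean $F_b(s) := \mathbb{P}(V \le s)$ where $V := r_b(\widehat{\theta}^{(b)} - \theta_0)$. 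Hoeffding's inequality for U-statistics bounds $|\hat G_n(s) - F_b(s)| \le \sqrt{\log(2n/b)/(2[n/b])}$ at each endpoint with probability at least $1 - b/n$. Two further ingredients connect $F_b(\pm r_b t/r_n)$ to $F_W(0)$: assumption~\ref{eq:limiting-distribution} with $m = b$ gives $|F_b(s) - F_W(s)| \le \delta_b$; and assumption~\ref{eq:continuity-distribution}, combined with monotonicity of $F_W$, gives $|F_W(s) - F_W(0)| \le \mathfrak{C}|s|$ for $|s| \le r^*$. Sandwiching these three estimates yields $|p_n - F_W(0)| \le \sqrt{\log(2n/b)/(2[n/b])} + 2\delta_b + 2\mathfrak{C} r_b t/r_n$ on $A$ and on the U-statistic concentration event.

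Combining the two steps by triangle inequality and union-bounding the failure events (Hoeffding for iid subsamples, Hoeffding for the U-statistic, the tail event $A^c$) produces the stated bound. The principal obstacle is that $\widehat{\theta}^{(b)}$ and $\widehat{\theta}$ are \emph{not} independent: both are computed from the same $X_1, \ldots, X_n$. The condition-on-data reduction to a U-statistic and the subsequent monotonicity-based sandwich handle this cleanly without needing an independent copy of $\widehat{\theta}$. A secondary subtlety is that assumption~\ref{eq:continuity-distribution} is only a symmetric, one-sided Lipschitz control at the origin; using monotonicity of $F_W$ together with $F_W(t) - F_W(-t) \le \mathfrak{C} t$ upgrades this into the pointwise bound $|F_W(s) - F_W(0)| \le \mathfrak{C}|s|$ that the sandwich needs.
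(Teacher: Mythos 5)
Your proposal is correct and follows essentially the same route as the paper's proof: the same decomposition into a conditional-on-the-data Monte Carlo concentration term, a U-statistic concentration term at the endpoints $\pm r_b t/r_n$ via Hoeffding's inequality for U-statistics, the $\delta_b$ distributional approximation, and the $\mathfrak{C}r_bt/r_n$ continuity term, all on the event $\{r_n|\widehat{\theta}-\theta_0|\le t\}$ whose complement is bounded exactly as you do. The only cosmetic difference is that the paper invokes Massart's tight DKW inequality where you use plain conditional Hoeffding, which for the single evaluation point $0$ gives the identical bound.
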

\noindent The proof follows a similar structure to that of~\cite{politis1994large} and appears in~Section~\ref{appsec:subsampling-delta-control}.

Choosing $t\to\infty$ in Lemma~\ref{lem:subsampling-delta-control} such that $r_bt/r_n\to0$ as $n\to\infty$, we conclude that $|\widehat{\Delta}_n - \Delta| = o_p(1)$; for example, one can take $t = \sqrt{r_n/r_b}$. This combined with Theorem~\ref{thm:subsampling-hull-validity} implies that \ahulc yields an asymptotically valid confidence interval for $\theta_0$ under assumptions~\ref{eq:limiting-distribution},~\ref{eq:continuity-distribution}, and~\ref{eq:subsample-size}.
\section{Applications to non-standard problems}\label{sec:applications-nonparametric-models}
Many commonly used estimators are derived from classical parametric and semi-parametric efficiency theory and {\color{black}after proper normalization} have an asymptotic normal distribution with zero mean. This implies that these estimators have an asymptotic median bias of zero, making them standard problems and allowing for the application of the \hulc with $\Delta = 0$. There do exist estimators that have a non-standard rate of convergence and a non-standard limiting distribution. In this section, we discuss {four} non-standard examples where either the rate of convergence or the limiting distribution or both are unknown in practice. With the rate of convergence unknown, subsampling does not readily apply to yield a confidence interval; one needs to estimate the rate of convergence as in~\cite{bertail1999subsampling}.
\subsection{Squared mean estimation (revisited)}\label{subsec:squared-mean-revisited}
{\color{black}In Section~\ref{subsec:mean-square}, we discussed the application of \hulc in the context of squared mean estimation. In this application, the rate of convergence of the estimator can be $n$ or $\sqrt{n}$ depending on $\mu$ and as $\mu\to0$ as $n\to\infty$ at different rates, other rates of convergence are possible. Figure~\ref{fig:mean-square-subsampling} shows the performance of \hulc (with a conservative median bias bound) and \ahulc and compares them to subsampling. As expected, \hulc yields a conservative confidence interval with coverage at least $1-\alpha$. Subsampling with estimated rate of convergence as in~\cite{bertail1999subsampling} fails to attain correct coverage. This can be due to two reasons. First, we fixed the sample size at 100, which might not be large enough for asymptotics of subsampling. Second, subsampling is not uniformly valid in this example. This means that for each $\mu$ (fixed as $n$ changes), the coverage asymptotically is at least $1-\alpha$ but if $\mu$ is also allowed to change with the sample size, then subsampling asymptotics break down as shown in~\cite{andrews2010asymptotic}. In Figure~\ref{fig:mean-square-subsampling}, this can be seen via the dip in the coverage for $\mu$ close enough to $0$ ($\mu\in[0, 0.25]$ in our setting). It is very interesting and rather surprising to observe that \ahulc maintains coverage for all $\mu$ and is further close to the nominal $1-\alpha$ for $\mu$ close to and farther away from zero. Although subsampling is not valid for construction of confidence intervals, the estimate of median bias from subsampling, from our experiments, seems to be at least as high as the true median bias.}
\subsection{Heavy-tailed mean estimation}\label{subsec:heavy-tailed-mean}
In Section~\ref{subsec:mean-estimation}, we discussed the application of the \hulc in the context of mean estimation when the limiting distribution is symmetric around zero. When the random variables $X_1, \ldots, X_n$ do not have a finite second moment, then the limiting distribution of the sample mean $\widebar{X}_n$ can {\color{black}fail to be} symmetric around the population mean $\mu$ with the amount of asymmetry depending on the tail decay on either side of $\mu$. In this case, the rate of convergence also depends on tail decay and is unknown a priori, which makes subsampling inapplicable. See~\cite{romano1999subsampling} for an application of subsampling using the studentized statistic, which does not require estimating the rate of convergence. Without knowing the rate of convergence, we can apply Algorithm~\ref{alg:confidence-unknown-Delta} to obtain an estimate of the median bias and create a confidence interval for the population mean. In this case, provided that the median bias
is not too close to $1/2$ the \ahulc will yield non-trivial confidence intervals. 
\subsection{Shape constrained regression}\label{subsec:shape-constrained}
Constructing confidence intervals in the context of general non-parametric regression is a difficult task.
In order to obtain optimal estimation rates we aim to explicitly balance (squared) bias and variance. On the other hand, the exact bias is often intractable and difficult to account for in confidence interval construction. As a consequence, often under-smoothing is used to ensure that the squared bias is negligible compared to the variance asymptotically. In practice, however, under-smoothing can be sensitive to the precise choice of tuning parameters.

If the conditional mean function is assumed to satisfy a shape constraint such as monotonicity or convexity, then the least squares estimator of the conditional mean has negligible bias uncomplicating the inference problem. However, the rate of convergence and the limiting distribution depends on the local smoothness of the conditional mean. To be concrete, consider the setting of univariate monotone regression with equi-spaced design, i.e., $Y_i = f_0(i/n) + \varepsilon_i$ where $\varepsilon_i$s are independent and identically distributed with mean zero and finite variance $\sigma^2 > 0$, and $f_0$ is our shape constrained target. Consider the least squares estimator (LSE) of $f_0$ as
\[
\widehat{f}_n ~:=~ \argmin_{f:\,\mbox{increasing}}\,\frac{1}{n}\sum_{i=1}^n (Y_i - f(i/n))^2.
\]
Note that $\widehat{f}_n(\cdot)$ is defined uniquely only at $i/n, 1\le i\le n$, and is, conventionally, defined to as a piecewise constant increasing function on $[0, 1]$. 
In this setting, for any $t\in(0, 1)$ such that $f_0(\cdot)$ has a positive continuous derivative on some neighborhood of $t$, the LSE satisfies
$n^{1/3}(\widehat{f}_n(t) - f_0(t)) \overset{d}{\to} [4\sigma^2f_0'(t)]^{1/3}\mathbb{C},$
where $\mathbb{C} = \argmin_{h\in\mathbb{R}}\{\mathbb{W}(h) + h^2\}$ has Chernoff's distribution (here $\mathbb{W}(\cdot)$ is a two-sided Brownian motion starting from $0$). It is important here that $f_0'(t) \neq 0$. If $f_0^{(j)}(t) = 0$ for $1\le j\le p-1$ and $f_0^{(p)}(t) \neq 0$ (for $p\ge1$), where $f_0^{(j)}$ denotes the $j$-th derivative, then $n^{p/(2p + 1)}(\widehat{f}_n(t) - f_0(t))$ converges to a non-degenerate distribution depending on $f^{(p)}_0(t)$ and $\sigma^2$. Finally, if $f_0(\cdot)$ is flat at $t$, then the rate of convergence becomes $n^{1/2}$. These results are known in both the fixed and random design settings (see for instance, \cite{wright1981asymptotic, durot2008monotone,guntuboyina2018nonparametric}).
These rates of convergences imply that the LSE admits an adaptive behavior and for arbitrary monotone functions and consequently it is unclear how to perform inference. The situation becomes more complicated in the multi-dimensional case where the limiting distribution depends on the anisotropic smoothness of $f_0$. Recently~\cite{deng2020confidence} proved that the rates of convergence along with the nuisance parameters in the limiting distributions can be estimated consistently using $\widehat{f}_n$. This theory requires substantial new techniques and still requires estimation of $\sigma^2$. Alternatively, we can use the \ahulc in all of these cases to obtain asymptotically valid confidence intervals. It is worth mentioning that in most of these settings, the median bias of the limiting distribution is also unknown because it depends on the unknown local smoothness. The same discussion also holds true for other shape constrained models such as convex regression and current status regression; see~\citet[Section 4]{guntuboyina2018nonparametric} and~\cite{deng2020inference} for details.

\begin{figure}[h]
\centering
\includegraphics[width=\textwidth,height=2.8in]{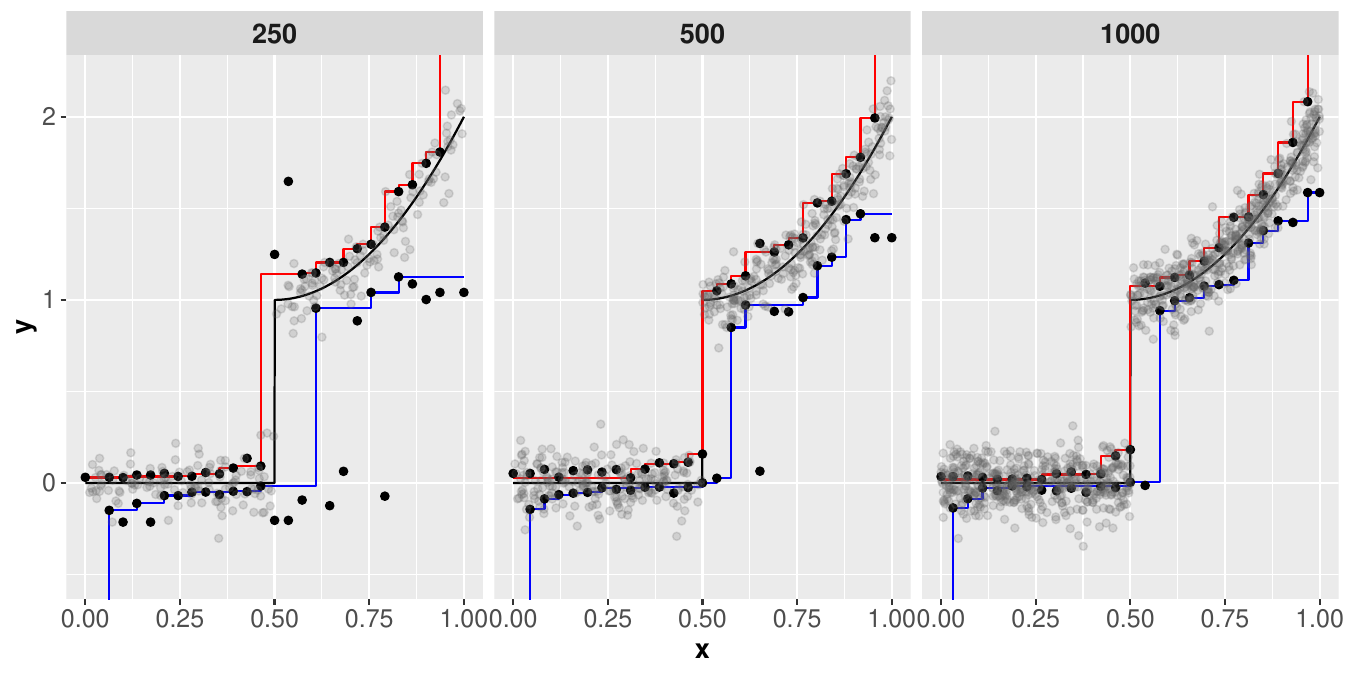}
\vspace{-0.4in}
\caption{Confidence bands for a monotone conditional mean function as sample size changes from 250 to 1000. The black line shows the true function which is a constant on $[0, 0.5]$ and is a (strictly increasing) quadratic on $[0.5, 1]$. The LSE attains an $n^{1/2}$ rate on $[0, 0.5]$ and an $n^{1/3}$ rate on $[0.5, 1]$. The \ahulc simultaneous confidence intervals at $25$ equi-spaced points on $[n^{-1/2},1-n^{-1/2}]$ are shown as dark black points. The confidence band obtained via~\eqref{eq:interval-to-band-monotone} is shown as red and blue lines. The obtained sample for each sample size is plotted in gray.}
\label{fig:CI-monotone-subsampling}
\end{figure}

We note that for shape constrained regression, it is possible to obtain confidence bands from confidence intervals at several points on the domain. For example if we know $\ell(t_1) \le f_0(t_1) \le u(t_1)$ and $\ell(t_2) \le f_0(t_2) \le u(t_2)$ for two points $t_1, t_2\in[0,1]$ in the domain, then using the information {\color{black}that} $f_0(\cdot)$ is non-decreasing we can conclude that $\widebar{\ell}(t) \le f_0(t) \le \widebar{u}(t)$ for all $t\in[0, 1]$, where
\begin{equation}\label{eq:interval-to-band-monotone}
\widebar{\ell}(t) = \begin{cases}-\infty,&\mbox{for }t < t_1,\\
\ell(t_1),&\mbox{for }t_1 \le t < t_2,\\
\ell(t_2), &\mbox{for }t_2 \le t \le 1,\end{cases}\quad\mbox{and}\quad \widebar{u}(t) = \begin{cases}u(t_1),&\mbox{for }t \le t_1,\\
u(t_2),&\mbox{for }t_1 < t \le t_2,\\
\infty, &\mbox{for }t_2 < t \le 1.\end{cases}
\end{equation}
Of course, the more points at which confidence intervals are available, the better the confidence band is. Figure~\ref{fig:CI-monotone-subsampling} shows the simultaneous confidence intervals obtained from the \ahulc (with subsample size $b=n^{1/2}$ and $K_n=1000$) for a monotone conditional mean from observations $Y_i = f_0(X_i) + \varepsilon_i$ where $X_i\sim \mbox{Unif}[0, 1], \varepsilon_i\sim N(0, 0.1^2)$ and $f_0(x) = 1-\mathbbm{1}\{x\le0.5\}+ ((x-0.5)/0.5)^2\mathbbm{1}\{x>0.5\}$. {\color{black}The choice $b = n^{1/2}$ can be obtained by minimizing the bound in Lemma~\ref{lem:subsampling-delta-control} with $r_b = 1/\delta_b = b^{-\gamma}$, which with $\gamma = 1/3$ or $1/2$ represents the rate of convergence of the monotone LSE; see~\cite{han2019berry}.} This figure only shows one replication of the experiment and suggests that the width of the band seems to adapt to the local smoothness of $f_0$. {\color{black}In the following subsection, we present one simulation setting for monotone regression comparing the coverage and width of \hulc with subsampling. For more simulations comparing the coverage and width of \hulc and \ahulc in shape constrained problems, see \url{https://github.com/Arun-Kuchibhotla/HulC/blob/main/R/HulC%20for%20Shape%20Constrained%20Regression.ipynb}.}
\subsubsection{Pointwise Confidence Interval for Monotone Regression}
{\color{black}
Suppose $(X_i, Y_i), 1\le i\le n$ are independent and identically distributed observations satisfying
\[
Y_i = f_0(X_i) + \xi_i,\quad f_0(x) = |x|^{\beta}\mbox{sgn}(x), X_i\sim \mbox{Unif}[-1, 1], \xi_i|X_i \sim N(0, 1).
\]
Clearly, $f_0$ is a monotonically increasing function on $[-1, 1]$ and $|f_0(x) - f_0(0)| = |x|^{\beta}$ which implies that $f_0$ is locally $\beta$-smooth at $0$. From the results of~\cite{wright1981asymptotic}, it follows that the LSE $\widehat{f}_n$ satisfies $n^{\beta/(2\beta + 1)}(\widehat{f}_n(0) - f_0(0))$ converges in distribution to $\mathbb{C}_{\beta}/(\beta+1)^{1/(2\beta + 1)}$ where $\mathbb{C}_{\beta}$ is the slope at zero of the greatest convex minorant of $W(t) + |t|^{\beta + 1}$ with $W(\cdot)$ being the two-sided Weiner-Levy process with variance one per unit time. Hence, the rate of convergence and the limiting distribution depends curcially on the local smoothness parameter $\beta$. Experimentally, we verified that $\mathbb{C}_{\beta}$ is equally likely to be positive or negative and hence, suggests that $\widehat{f}_n(0)$ is asymptotically median unbiased for $f_0(0)$ no matter the value of $\beta$. With this experimental backing, we applied \hulc for this setting and also compared with the performance of subsampling procedure of~\cite{bertail1999subsampling} that estimates the rate of convergence with different choices of subsample sizes. Figure~\ref{fig:isotonic_comparison} shows the comparison of the coverage and width as the sample size changes from 50 to 1000, based on 200 Monte Carlo replications for each sample size and each $\beta$.
\begin{figure}[!h]
\centering
\includegraphics[height=3.2in,width=\textwidth]{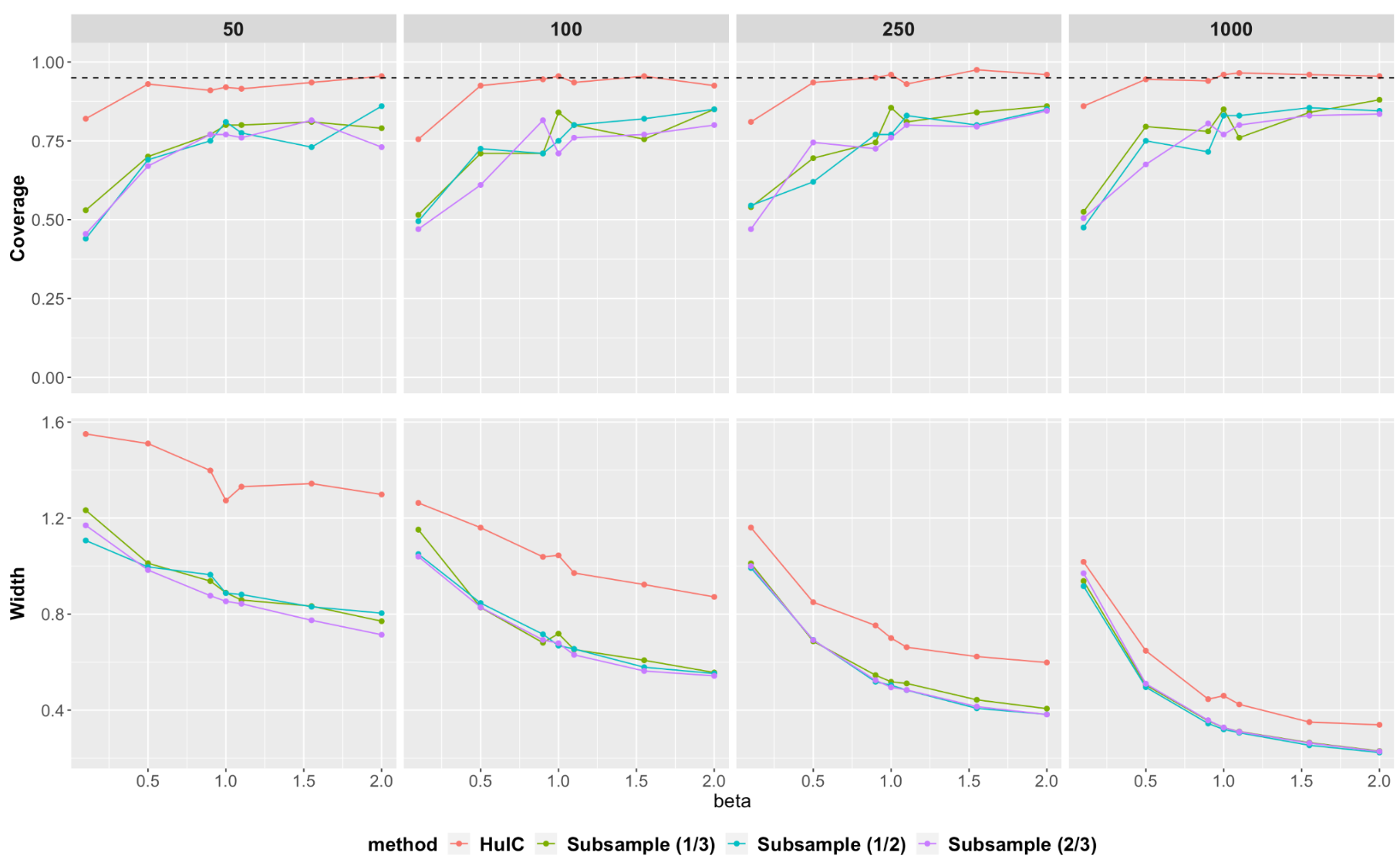}
\caption{Comparison of \hulc and subsamplng with estimated rate of convergence in isotonic regression under varying local smoothness. The sample size is mentioned at the top of each plot and the smoothness parameter $\beta$ is on the $x$-axis. \hulc maintains the coverage at the nominal level of $0.95$ for almost all $\beta$ for all sample sizes, while subsampling with any subsample size fails.}
\label{fig:isotonic_comparison}
\end{figure}
}

\subsection{Nonparametric Regression and Forests}

The \hulc also yields
confidence intervals
for nonparametric regression
even in the presence of unknown
asymptotic {\color{black}mean} bias. We briefly sketch the main ideas, deferring most of the details to future work. We focus on constructing a confidence interval for
the non-parametric regression function $f_0$ at a fixed point $x_0\in\mathbb{R}$.
For example, let
$\widehat{f}_n(x_0)$
be a kernel regression estimator
with bandwidth $h$.
If $h=h_n$ is chosen to balance bias and variance
then
$\sqrt{n h_n}(\widehat{f}_n(x_0)-f_0(x_0))$
converges to a Gaussian law
with mean
$Q = \lim_{n\to\infty} \sqrt{n h_n} \E[\widehat{f}_n(x_0) - f_0(x_0)]$
(which is the asymptotic {\color{black}mean} bias), and finite, non-zero variance. 
As we discussed earlier, classical methods often rely on undersmoothing to ensure that $Q = 0$. However, the \ahulc works as long as $Q$ is finite, since in this case 
the asymptotic median bias is bounded away from $1/2$. We also emphasize that, in contrast to undersmoothing for which there 
are relatively few guidelines on practical implementation, it is more conventional in non-parametric regression to balance (squared) bias and variance, and in many cases
cross-validation methods yield tuning parameters which achieve this balancing under various conditions (see for instance, Theorem 2.2 in~\cite{li2004cross}).

The argument above is not specific to kernel regression estimators. More generally,
let $\widehat{f}_n(x_0)$
be a complicated nonparametric estimator
such as a random forest. The \ahulc yields a valid interval for $f_0(x_0)$ provided that we are able to balance (squared) bias and variance, i.e. so long as we can ensure that for
some possibly unknown 
$r_n$,
we have that
$r_n (\widehat{f}_n(x_0) - f_0(x_0))$
converges to a Gaussian law with possibly
non-zero (but finite) mean, and non-zero, finite variance.

\section{Confidence Regions under Unimodality}\label{sec:unimodality-lanke}
In previous sections, we have considered the construction of confidence intervals based on the median bias of the estimation procedure. In some cases, the estimation procedure has a large
median bias close to $1/2$. For example in the mean square estimation problem, $\widebar{X}_n^2$ has a median bias of $1/2$ when $\mu = 0$ and in the uniform model, the MLE has a median bias of $1/2$. In these cases, 
the \hulc and \ahulc are not useful because they would require infinite splits of the data. Interestingly, in these examples, the limiting distribution of the estimation procedure is unimodal at the true parameter. In the univariate setting, unimodality {\color{black}of an estimator} at $\theta_0$ means that the distribution function {\color{black}of the estimator} is convex on $(\infty, \theta_0]$ and concave on $[\theta_0, \infty)$. {\clr It is important to note that unimodality of a random variable is a global property of the distribution function unlike median bias, which is a local property.}

Using the results of~\cite{lanke1974interval}, we can construct a confidence interval based on unimodality of the estimation procedure. The resulting confidence interval is very similar to the one from the \hulcnospace. 
The \uhulc method is presented in Algorithm~\ref{alg:confidence-unimodal}.
\begin{algorithm}[h]
    \caption{Confidence Interval based on Unimodality and Median Bias (\uhulcnospace)}
    \label{alg:confidence-unimodal}
    \SetAlgoLined
    \SetEndCharOfAlgoLine{}
    \KwIn{data $X_1, \ldots, X_n$, coverage probability $1 - \alpha$, a parameter $t>0$ and an estimation procedure $\mathcal{A}(\cdot)$ that yields estimators asymptotically unimodal at $\theta_0$ and have an asymptotic median bias of $\Delta\in[0, 1/2]$.}
    \KwOut{A confidence interval $\widehat{\mathrm{CI}}_{\alpha}^{\mathrm{mode}}$ such that $\mathbb{P}(\theta_0\in\widehat{\mathrm{CI}}_{\alpha}^{\mathrm{mode}}) \ge 1 - \alpha$ asymptotically.}
    Set $Q(B; t, \Delta) := P(B;\Delta)(1 + t)^{-B + 1},$
    and find the smallest integer $B = B_{\alpha,t,\Delta}\ge1$ such that $Q(B; t, \Delta) \le \alpha$. Recall $P(B; \Delta)$ from~\eqref{eq:miscoverage-function-B-Delta}.\;
    Generate a random variable $U$ from Uniform distribution on $[0,1]$ and set
    \begin{equation}\label{eq:definition-eta}
    \eta_{\alpha,t} ~:=~ \frac{Q(B_{\alpha,t,\Delta}-1;t,\Delta) - \alpha}{Q(B_{\alpha,t,\Delta}-1;t,\Delta) - Q(B_{\alpha,t,\Delta};t,\Delta)}\quad\mbox{and}\quad B^* := \begin{cases} B_{\alpha,t,\Delta}, &\mbox{if }U \le \eta_{\alpha,t},\\
    B_{\alpha,t,\Delta} - 1, &\mbox{if }U > \eta_{\alpha,t}.
    \end{cases} 
    \end{equation}\vspace{-0.2in}\;
  	Randomly split the data $X_1, \ldots, X_n$ into $B^*$ many disjoint sets $\{\{X_i:i\in S_j\}: 1\le j\le B^*\}$ of approximately equal sizes.\;
  	Compute estimators $\widehat{\theta}_j := \mathcal{A}(\{X_i:\,i\in S_j\})$, for $1\le j\le B^*$ and set
  	\[
  	\widehat{\theta}_{\max} ~:=~ \max_{1\le j\le B^*}\widehat{\theta}_j,\quad\mbox{and}\quad \widehat{\theta}_{\min} ~:=~ \min_{1\le j\le B^*}\widehat{\theta}_j.
  	\]\vspace{-0.2in}\;
      \Return the confidence interval 
      $\widehat{\mathrm{CI}}_{\alpha}^{\mathrm{mode}} := [\widehat{\theta}_{\min} - t(\widehat{\theta}_{\max} - \widehat{\theta}_{\min}),\, \widehat{\theta}_{\max} + t(\widehat{\theta}_{\max} - \widehat{\theta}_{\min})].$
\end{algorithm}

The \uhulc can be seen as a generalization of the \hulc where we also use the unimodality of estimators, if available. {\color{black}\uhulc involves a tuning parameter $t$ indicates how much to inflate the convex hull confidence interval.} Taking $t = 0$ in the \uhulc gives exactly the \hulcnospace. The confidence interval with $t = 0$ need not have coverage validity if the asymptotic median bias is $1/2$ and by taking $t > 0$, we get asymptotic coverage when the limiting distribution is unimodal even if the asymptotic median bias is $1/2$. {\color{black}Even if $\Delta < 1/2$, using $t > 0$ yields a reduction in the number of estimators used for convex hull. Formally, set $Q(B; t, \Delta) := P(B;\Delta)(1 + t)^{-B + 1}$, which is upper bounded by $P(B; \Delta)$. Then $\uhulc$ only requires $B_{\alpha,t,\Delta}$ many independent estimators, where $B_{\alpha,t,\Delta}$ is the smallest integer such that $Q(B_{\alpha,t,\Delta}; t, \Delta) \le \alpha$.} 

In the \uhulcnospace, we assume that the limiting median bias $\Delta$ is known, but one can always substitute $\Delta = 1/2$ if median bias is unknown; recall that $P(B;1/2) = 1$ for all $B$. 
Instead of assuming a known $\Delta$, one can use the subsampling approach from Section~\ref{sec:adaptive-convex-hull} to replace $\Delta$ with the subsampling estimator. We leave it to future work to derive a final miscoverage bound for this subsampling-based  procedure.

The following theorem (proved in Section~\ref{appsec:unimodal-CI-coverage}) shows that the confidence interval returned by the \uhulc has a miscoverage probability bounded asymptotically by $\alpha$. 
\begin{thm}\label{thm:unimodal-CI-coverage}
Suppose the estimators $\widehat{\theta}_j$ in the \uhulc are independent and are constructed based on approximately equal sized samples. Further, suppose the estimators are continuously distributed and satisfy
\begin{align}
\label{eqn:near-unimodal}
\sup_{u\in\mathbb{R}}|\mathbb{P}(r_{n,\alpha}(\widehat{\theta}_j - \theta_0) \le u) - \mathbb{P}(W \le u)| ~\le~ \delta_{n,\alpha},
\end{align}
for some sequence $\{r_{n,\alpha}\}_{n\ge1}$ and a continuous random variable $W$ that is unimodal at $0$ and has a median bias of $\Delta$ (i.e., $\Delta = |1/2 - \mathbb{P}(W \le 0)|$). Then for all $t \ge 0$, $\Delta\in[0, 1/2]$, and $\alpha\in[0, 1]$,
\begin{equation}\label{eq:coverage-mode}
\mathbb{P}\left(\theta_0 \notin \widehat{\mathrm{CI}}_{\alpha}^{\mathrm{mode}}\right) ~\le~ \alpha\left(1 - 10B_{\alpha,t,\Delta}(1 + t)\delta_{n,\alpha}\right)_+^{-1}.
\end{equation}
\end{thm}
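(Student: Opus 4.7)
The plan is to prove the claim in three stages: bound the miscoverage under the exact unimodal limit $W$, use the randomization to achieve $\alpha$ in that limit, and then transfer the bound to the actual distribution of $\widehat{\theta}_j$ using the Kolmogorov-type closeness in~\eqref{eqn:near-unimodal}.

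\textbf{Step 1 (Miscoverage under the unimodal limit).} Set $Y_j := r_{n,\alpha}(\widehat{\theta}_j - \theta_0)$; since scaling all estimators by a common positive factor does not alter $\widehat{\mathrm{CI}}_{\alpha}^{\mathrm{mode}}$, the miscoverage event is scale-invariant and I can work with $Y_j$. The positive-side miscoverage event $\{\theta_0 < \widehat{\theta}_{\min} - t(\widehat{\theta}_{\max}-\widehat{\theta}_{\min})\}$ rewrites as $\{Y_{\min}>0,\ Y_{\max}/Y_{\min} < (1+t)/t\}$. For i.i.d.\ $W_1,\dots,W_{B^*}$ drawn from the unimodal limit, condition on all $W_j>0$: the conditional density is non-increasing on $(0,\infty)$, so the conditional CDF $\tilde{G}(s) := (G(s)-(1-p))/p$ (with $p=\mathbb{P}(W>0)$) is concave on $(0,\infty)$ with $\tilde{G}(0)=0$. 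Concavity through the origin forces $\tilde{G}^{-1}$ to be convex through the origin, and hence $\tilde{G}^{-1}(S)\ge (S/R)\tilde{G}^{-1}(R)$ whenever $S\ge R>0$. Applying this with $R=\tilde{G}(W_{\min}),\ S=\tilde{G}(W_{\max})$ yields $W_{\max}/W_{\min}\ge U_{(B^*)}/U_{(1)}$ where $U_{(1)},U_{(B^*)}$ are the extremes of $B^*$ i.i.d.\ $\mathrm{Uniform}[0,1]$'s. Using the classical independence of $U_{(1)}/U_{(B^*)}$ from $U_{(B^*)}$, the ratio $U_{(1)}/U_{(B^*)}$ is distributed as the minimum of $B^*-1$ i.i.d.\ uniforms, so $\mathbb{P}(U_{(1)}/U_{(B^*)}>t/(1+t)) = (1+t)^{-(B^*-1)}$. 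Combined with the mirror-image argument on the negative side, the unconditional two-sided miscoverage is at most $[p^{B^*}+(1-p)^{B^*}](1+t)^{-(B^*-1)} = Q(B^*;t,\Delta)$.

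\textbf{Step 2 (Randomization yields $\alpha$ in the limit).} A direct algebraic check using the definition of $\eta_{\alpha,t}$ in~\eqref{eq:definition-eta} gives
\[
\eta_{\alpha,t}\,Q(B_{\alpha,t,\Delta};t,\Delta) + (1-\eta_{\alpha,t})\,Q(B_{\alpha,t,\Delta}-1;t,\Delta) \;=\; \alpha.
\]
Averaging over the independent $U$ in Algorithm~\ref{alg:confidence-unimodal} therefore gives a limiting miscoverage of at most $\alpha$ exactly.

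\textbf{Step 3 (Finite-sample transfer).} Starting from the integral identity
\[
\mathbb{P}_{F}(\text{positive-side miscov}) \;=\; B^*\int_0^\infty \bigl[F((1+1/t)s)-F(s)\bigr]^{B^*-1}\,dF(s)
\]
and its negative-side analogue, I split $\mathbb{P}_{F_n}(\cdot)-\mathbb{P}_G(\cdot)$ into a change-of-integrand term and a change-of-measure term. For the first, the elementary inequality $|a^{B^*-1}-b^{B^*-1}|\le (B^*-1)|a-b|$ on $[0,1]$ together with $|[F_n((1+1/t)s)-F_n(s)] - [G((1+1/t)s)-G(s)]|\le 2\delta_{n,\alpha}$ yields a contribution of order $B^*\delta_{n,\alpha}\cdot Q(B^*;t,\Delta)$. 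For the second, set $h(s):=[G((1+1/t)s)-G(s)]^{B^*-1}$; since $h(0)=h(\infty)=0$, integration by parts gives $\int h\,d(F_n-G)=-\int(F_n-G)\,dh$, and I bound $|F_n-G|\le \delta_{n,\alpha}$ together with the total variation of $h$ on $(0,\infty)$, which contributes the remaining $(1+t)$ factor (this is where the chain-rule derivative of $G((1+1/t)s)$ enters). Collecting both one-sided contributions, with the $p^{B^*}+(1-p)^{B^*}$ pre-factor retained, I obtain
\[
\mathbb{P}_{F_n}\bigl(\theta_0\notin\widehat{\mathrm{CI}}_{\alpha}^{\mathrm{mode}}\bigr)
\;\le\; Q(B^*;t,\Delta)\,\bigl[1 + 10\,B^*(1+t)\,\delta_{n,\alpha}\bigr].
\]
Combining with Step~2 (taking expectation over the randomized $B^*$) and applying the elementary inequality $1+x \le (1-x)^{-1}$ for $x\in[0,1)$ then delivers the advertised bound.

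\textbf{The main obstacle} lies squarely in Step~3: the miscoverage event depends on the \emph{ratio} $Y_{\max}/Y_{\min}$ rather than on marginal tail probabilities, so a crude total-variation bound is unavailable from the Kolmogorov-type closeness alone. The challenge is therefore to track both the pointwise perturbation of the integrand (which, through the $(B^*-1)$-th power, contributes the factor $B^*$) and the change of integrating measure (which, through the total variation of $[G((1+1/t)s)-G(s)]^{B^*-1}$, contributes the $(1+t)$ factor) while keeping the pre-factor $Q(B^*;t,\Delta)$ visible so that the final bound comes out multiplicative in $\alpha$.
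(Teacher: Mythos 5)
Your architecture (bound the miscoverage under the exact unimodal limit, randomize, then transfer) is reasonable, and your Step 1 is a genuinely different and rather elegant route to the limiting bound $Q(B^*;t,\Delta)$: the paper instead works with the Lanke-type integral identity and uses unimodality only in the form $F(\lambda x)\ge F(0)+\lambda(F(x)-F(0))$, never reducing to uniform order statistics. Step 2 coincides with the paper. The gap is in Step 3, and it is not merely unfinished bookkeeping. The change-of-integrand term, bounded as you propose via $|a^{B-1}-b^{B-1}|\le (B-1)|a-b|$ together with $|a-b|\le 2\delta_{n,\alpha}$, produces (after multiplying by the outer factor $B^*$ and integrating $dF_n$ over $(0,\infty)$) an \emph{additive} error of order $B^{*2}\delta_{n,\alpha}$, not the claimed \emph{multiplicative} error $B^*\delta_{n,\alpha}\,Q(B^*;t,\Delta)$. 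Since $Q(B^*;t,\Delta)\approx\alpha$ can be far smaller than $B^{*2}\delta_{n,\alpha}$, an additive remainder of that size does not yield \eqref{eq:coverage-mode}; the multiplicative form is the entire point of the theorem (it is what makes the union bound over coordinates harmless). To rescue the term you would need $|a^{B-1}-b^{B-1}|\le (B-1)|a-b|\max(a,b)^{B-2}$ and then a bound on $\int_0^\infty[G((1+1/t)s)-G(s)]^{B-2}\,dF_n(s)$ that re-uses unimodality (namely $G((1+1/t)s)-G(s)\le (1-G(0))/(1+t)$) and handles the asymmetry between $\mathbb{P}(W>0)$ and $\mathbb{P}(W\le 0)$ --- at which point you are essentially reconstructing the paper's argument.

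The paper sidesteps your two-term decomposition entirely: it never computes the miscoverage under $G$. Instead it transfers the unimodal ``stretching'' inequality to the finite-sample CDFs, obtaining $F_{n,i}(x)-F_{n,i}(tx/(1+t))\le (F_{n,j}(x)-F_{n,j}(0))/(1+t)+4\delta_{n,\alpha}$, keeps the perturbation \emph{inside} the $(B-1)$-st power, and integrates exactly to get $M_B\le(1+t)^{-B+1}[(\tfrac{1}{2}-\Delta+5(1+t)\delta_{n,\alpha})^B+(\tfrac{1}{2}+\Delta+5(1+t)\delta_{n,\alpha})^B]$. The ratio $M_B/Q(B;t,\Delta)$ is then controlled by the elementary bound $g(x)\le g(0)(1-Bx/(x+1))_+^{-1}$ for $g(x)=(x+1-2\Delta)^B+(x+1+2\Delta)^B$, which handles the $\Delta$-asymmetry uniformly and produces the factor $(1-10B(1+t)\delta_{n,\alpha})_+^{-1}$; the randomization identity for $\eta_{\alpha,t}$ in \eqref{eq:definition-eta} is applied exactly as in your Step 2. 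If you want to keep your Step 1, you should emulate this ``absorb before integrating'' device rather than differencing against the limit. Two smaller points: the estimators are only approximately identically distributed, so the i.i.d.\ identity $B\int[\cdot]^{B-1}dF$ must be replaced by the sum-over-$j$ version with distinct $F_{n,j}$; and your change-of-measure term can be made to work, but only after bounding the total variation of $[G((1+1/t)s)-G(s)]^{B-1}$ using unimodality and relating it to $Q(B;t,\Delta)$, neither of which you carry out.
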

Similar to Theorem~\ref{thm:infinite-order-coverage}, Theorem~\ref{thm:unimodal-CI-coverage} shows that the confidence interval from the \uhulc {\color{black}has its miscoverage probability bounded by $\alpha$}
up to a multiplicative error. Once again, this is unlike the coverage guarantee for Wald's interval. 
Because of the multiplicative error, the guarantee from Theorem~\ref{thm:unimodal-CI-coverage} is also suitable for an application of the union bound to obtain a valid multivariate confidence region, as discussed previously in Section~\ref{subsec:multivariate}.

Note that the right hand side of~\eqref{eq:coverage-mode} is finite if and only if $10B_{\alpha,t,\Delta}(1 + t)\delta_{n,\alpha} < 1$. It is easy to prove that $B_{\alpha,t,\Delta} = O(\log(1/\alpha))$ when either $t > 0$ or $\Delta < 1/2$ and in many cases, $\delta_{n,\alpha} = O(\sqrt{\log(1/\alpha)/n})$. Hence, the condition for finiteness would hold true as long as $n \gg \log^3(1/\alpha)$; this is similar to the requirement in the \hulcnospace. The importance of the \uhulc stems from its ability to tackle problems where the median bias of the estimator is large (near $1/2$). 

The width of the confidence interval $\widehat{\mathrm{CI}}_{\alpha}^{\mathrm{mode}}$ is given by $(1 + 2t)(\widehat{\theta}_{\max} - \widehat{\theta}_{\min})$. This is $1 + 2t$ times larger than the width of confidence interval from the \hulcnospace. The confidence interval has
{\color{black}asymptotically valid coverage of at least $1-\alpha$}
for any parameter $t \ge 0$ and as $t$ increases, the number of splits $B$ in the \uhulc decreases leading to a smaller value of $\widehat{\theta}_{\max} - \widehat{\theta}_{\min}$. Similar to the map $\Delta \mapsto B_{\alpha,\Delta}$, the map $(t,\Delta)\mapsto B_{\alpha,t,\Delta}$ is a piecewise constant function. 

\subsection{Standard problems (revisited)}
The \uhulc can make use of both asymptotic unimodality and asymptotic median unbiasedness which holds true for most of the standard problems where the limiting distribution is Gaussian (a symmetric unimodal distribution). In many of the examples discussed in Section~\ref{sec:applications-parametric-models}, one can use the \uhulc to (potentially) obtain a tighter confidence interval. Note that $B_{\alpha,t,\Delta}$ in the \uhulc is always smaller than $B_{\alpha,\Delta}=B_{\alpha,0,\Delta}$ in the \hulcnospace. Once again the advantage is that we do not need to estimate the limiting variance of the estimators being used and need not know the rate of convergence.  
\subsection{Application 2: shape constrained regression (revisited)}
In Section~\ref{subsec:shape-constrained}, we used subsampling to estimate the median bias of the LSE in shape constrained regression. Experimentally, we found that the distribution of the LSE is unimodal at the true value. 
Consider the regression problem $Y_i = f_0(i/n) + \varepsilon_i$ where $\varepsilon_i\sim N(0, 1)$ and $f_0(x) \equiv 0$. The histograms of the LSE error $\widehat{f}_n(x_0) - f_0(x_0)$ {for $x_0$ ranging from $0.1$ to $0.9$} are shown in Figure~\ref{fig:isoplot} when the estimator is computed based on $10^6$ samples and 1000 replications. {\color{black}Figure~\ref{fig:isoplot} indicates that the distribution of $\widehat{f}_n(x_0) - f_0(x_0)$ has a unique mode and that mode is close to zero, and this property is unaffected by the location of $x_0\in[0, 1]$.}
\begin{figure}[!h]
    \centering
    \includegraphics[height=3in,width=\textwidth]{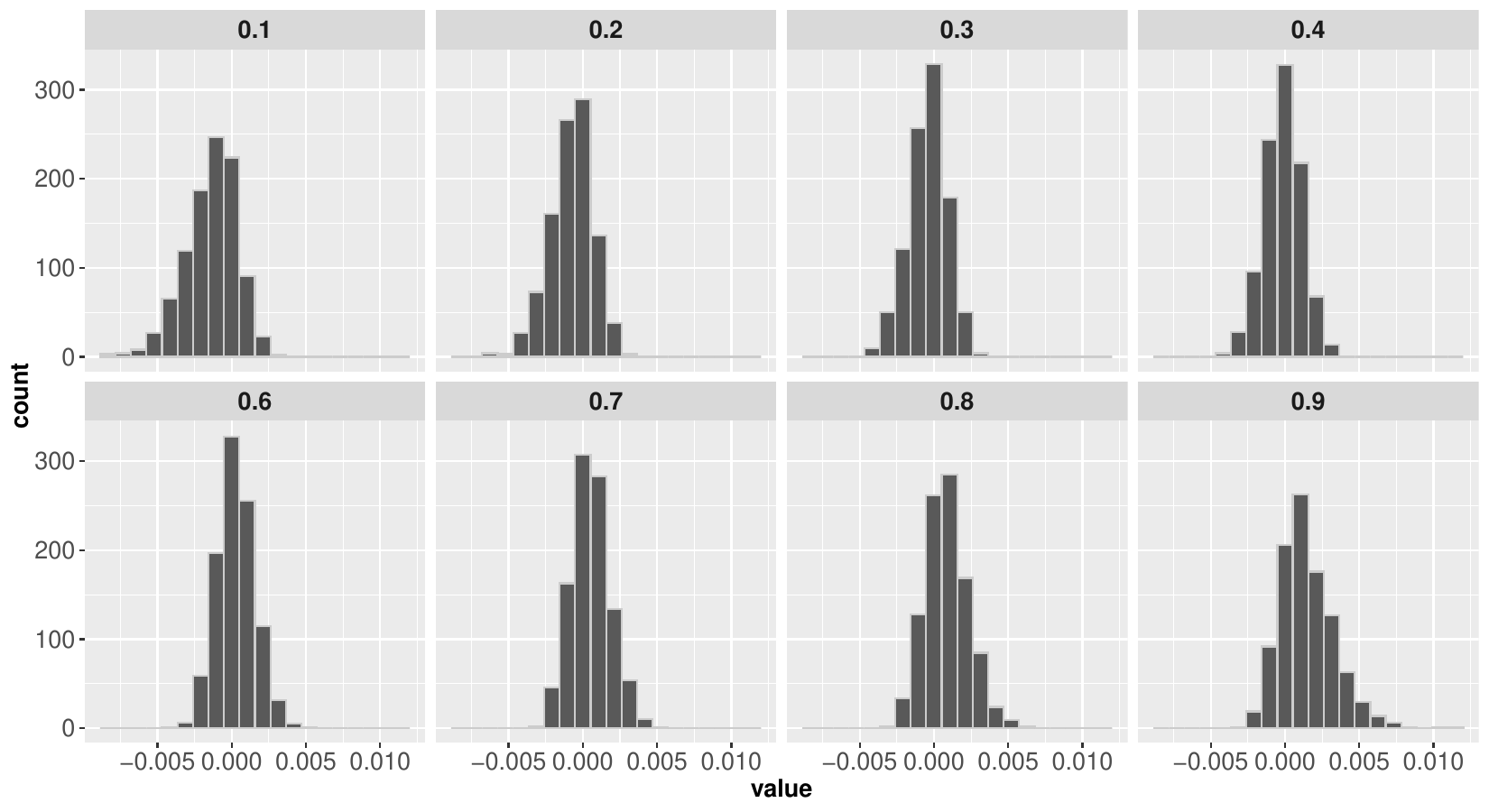}
    \vspace{-0.4in}
    \caption{Histogram of (centered) monotone LSE $\widehat{f}_n(x_0) - f_0(x_0)$ at $x_0 \in \{0.1, 0.2, \ldots, 0.9\}$ when the true function is identically zero. Clearly, the distributions are asymmetric but the mode is at the right place (0). The distribution at $0.1$ is left skewed and the one at $0.9$ is right skewed. The farther we move from the center of the support $[0, 1]$, the more asymmetric the distribution becomes.} 
    \label{fig:isoplot}
\end{figure}
We are not aware of a result proving unimodality of the limiting distribution of the LSE in general (when higher derivatives of $f_0$ may vanish at $x_0$). But, motivated by our experimental results, we apply the \uhulc to construct a confidence band, and leave a more rigorous investigation of its validity to future work.

The performance of the \uhulc for monotone regression is shown in Figure~\ref{fig:isotone-confidence}{\color{black}; for illustration, we use $t = 1/2$}. It shows adaptation and shows higher uncertainty around the change point. The confidence band here is noticeably larger than the one from the \ahulcnospace.
\begin{figure}[!h]
    \centering
    \includegraphics[width=\textwidth,height=2.8in]{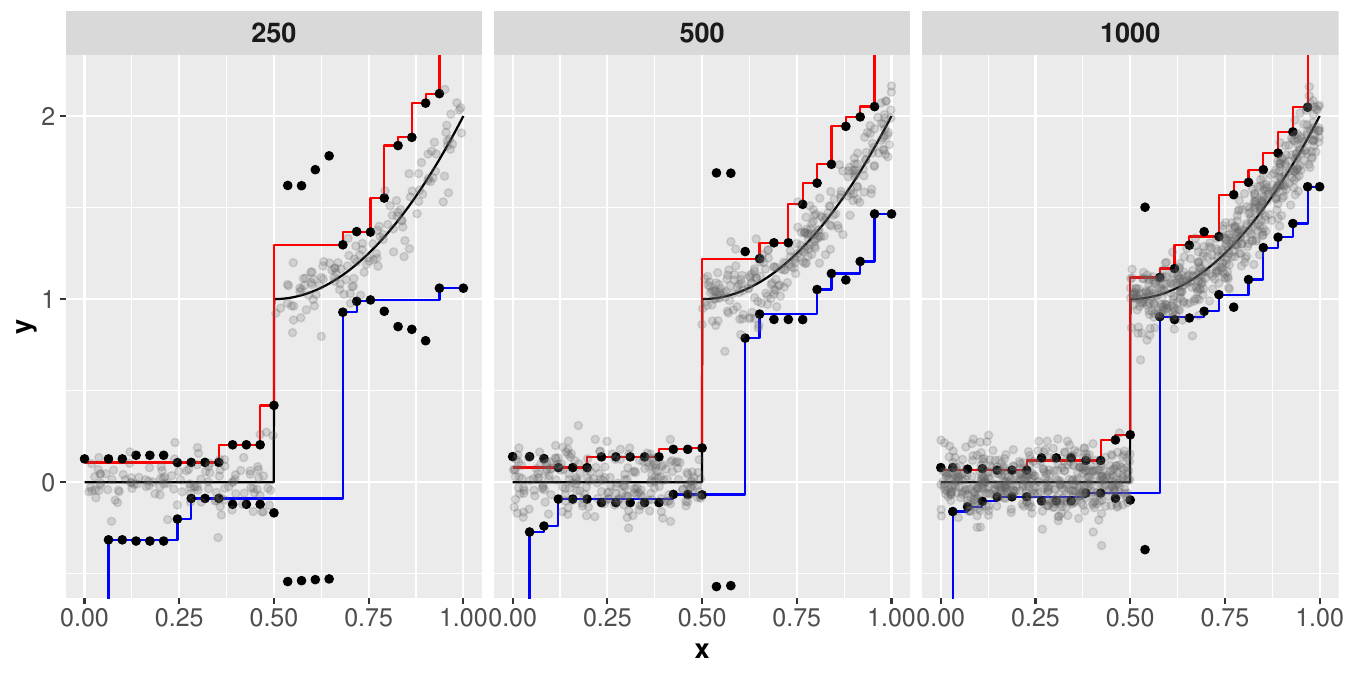}
    \vspace{-0.4in}
    \caption{Performance of the \uhulc (with $t = 1/2$ and $\Delta =1/2$) as sample size increases. The black line is the true monotone function which is a constant $0$ on $[0, 0.5]$ and is smooth on $[0.5, 1]$. The confidence intervals using the \uhulc along with union bound at 25 equi-spaced points on $[n^{-1/2}, 1 - n^{-1/2}]$ are shown in black points. The confidence bands from these simultaneous confidence intervals using~\eqref{eq:interval-to-band-monotone} are shown in red and blue.}
    \label{fig:isotone-confidence}
\end{figure}

\section{\hulc for multivariate parameters}\label{subsec:multivariate}
Suppose now that
$\theta_0\in\mathbb{R}^d$.
A slight modification of the \hulc still works if we replace the interval
with either the convex hull
or the rectangular hull
of the estimators.

As before, let $\widehat{\theta}_j, 1\le j\le B$ be independent estimators of
$\theta_0\in\mathbb{R}^d$.
The convex hull of a set of points in $\mathbb{R}^d$ is
the smallest convex set containing these points. The smallest
rectangle containing the estimators $\widehat{\theta}_j, 1\le j\le B$,
which we call the \emph{rectangular hull},
is
\[
\mbox{RectHull}(\{\widehat{\theta}_j: 
1\le j\le B\}) ~:=~ \bigotimes_{k=1}^d 
\left[\min_{1\le j\le B}e_k^{\top}\widehat{\theta}_j,\,\max_{1\le j\le B}e_k^{\top}\widehat{\theta}_j\right],
\] 
where $e_k, 1\le k\le d$ represent the canonical basis vectors in
$\mathbb{R}^d$; and $\bigotimes$ denotes the Cartesian product. 

To compactly state our next result, we define the following coordinate-wise maximum median bias,
\begin{align}
\label{eq:coordinate-max-median-bias}
\Delta = \max_{1\le k\le d}\max_{1\le j\le B}\mbox{Med-bias}_{e_k^{\top}\theta_0}(e_k^{\top}\widehat{\theta}_j).
\end{align}

Similar to Lemma~\ref{lem:fixed-B-coverage-result}, we have the following result (proved in Section~\ref{appsec:fixed-B-coverage-result-multivariate}) on the coverage of the convex hull and the smallest rectangle.
\begin{lem}\label{lem:fixed-B-coverage-result-multivariate}
Suppose $\widehat{\theta}_j, 1\le j\le B$ are independent estimators of $\theta_0\in\mathbb{R}^d$.
\begin{enumerate}
  \item If $\mathbb{P}(c^{\top}(\widehat{\theta}_j - \theta_0) \le 0) = 1/2$ for all $c\in\mathbb{R}^d\setminus\{0\}$, then for $B \ge d + 1$,
  \begin{equation}\label{eq:half-space-symmetry-coverage}
  \mathbb{P}\left(\theta_0 \notin \mathrm{ConvHull}(\{\widehat{\theta}_j: 1\le j\le B\})\right) ~=~ \frac{1}{2^{B-1}}\sum_{i=0}^{B-d-1}\binom{B-1}{i}.
  \end{equation}
  \item Recall the definition of $\Delta$ in~\eqref{eq:coordinate-max-median-bias}. For all $B \ge 1$,
  \begin{equation}\label{eq:each-coordinate-median-bias-coverage}
  \mathbb{P}\left(\theta_0 \notin \mathrm{RectHull}(\{\widehat{\theta}_j: 1\le j\le B\})\right) ~\le~ d\left\{\left(\frac{1}{2} + \Delta\right)^B + \left(\frac{1}{2} - \Delta\right)^B\right\}.
  \end{equation}
\end{enumerate} 
\end{lem}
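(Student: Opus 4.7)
The plan is to handle the two parts separately, reducing each to a classical or earlier tool.

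For Part 1, the goal is to recognize the statement as a direct corollary of Wendel's theorem on random convex hulls. First I would translate to the origin by setting $V_j := \widehat{\theta}_j - \theta_0$, so that the event $\theta_0 \notin \mathrm{ConvHull}(\{\widehat{\theta}_j\})$ is equivalent to $0 \notin \mathrm{ConvHull}(\{V_j\})$. Next I would verify Wendel's hypotheses from the half-space condition $\mathbb{P}(c^{\top} V_j \le 0) = 1/2$ for every nonzero $c$. Two consequences are needed: (i) the equality forces $\mathbb{P}(c^{\top} V_j = 0) = 0$ for all $c \ne 0$, so $V_j$ assigns no mass to any hyperplane through the origin (putting the points in general position a.s.); and (ii) the one-dimensional symmetry of $c^{\top} V_j$ about zero, holding for every $c$, lifts to full central symmetry $V_j \stackrel{d}{=} -V_j$ via the characteristic-function argument (marginal symmetry of all projections gives $\phi_{V_j}(c) = \phi_{V_j}(-c)$). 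With these hypotheses in hand, Wendel's theorem yields the exact combinatorial formula claimed.

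For Part 2, the plan is a coordinate-wise reduction to the univariate Lemma~\ref{lem:fixed-B-coverage-result}. The event $\theta_0 \notin \mathrm{RectHull}(\{\widehat{\theta}_j\})$ is exactly $\bigcup_{k=1}^d A_k$, where
\[
A_k := \left\{ e_k^{\top}\theta_0 \notin \left[\min_{1 \le j \le B} e_k^{\top}\widehat{\theta}_j,\, \max_{1 \le j \le B} e_k^{\top}\widehat{\theta}_j\right] \right\}.
\]
For each fixed $k$, the scalars $e_k^{\top}\widehat{\theta}_1, \ldots, e_k^{\top}\widehat{\theta}_B$ are independent univariate estimators of $e_k^{\top}\theta_0$ whose median biases are at most $\Delta$ by the definition in~\eqref{eq:coordinate-max-median-bias}. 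Applying Lemma~\ref{lem:fixed-B-coverage-result} coordinate by coordinate bounds $\mathbb{P}(A_k) \le (1/2 + \Delta)^B + (1/2 - \Delta)^B$, and a union bound over the $d$ coordinates produces the stated bound with the factor $d$.

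The main subtlety, and the place where I would spend the most care, lies in Part 1: showing that the half-space condition is strong enough to match Wendel's hypotheses, in particular that it upgrades marginal symmetry of each linear projection to full central symmetry of $V_j$. Part 2 is then essentially a routine union bound after the reduction to coordinate projections. One auxiliary point to verify in Part 1 is that $B \ge d+1$ is needed so that the origin can possibly lie in the convex hull of the $V_j$'s; for $B \le d$, $B$ points span at most a $(B-1)$-dimensional affine subspace, which generically misses the origin, and the combinatorial expression in the statement is only formally correct when $B \ge d + 1$.
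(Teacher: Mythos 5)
Your Part 2 is correct and coincides with the paper's proof: reduce to the univariate Lemma~\ref{lem:fixed-B-coverage-result} coordinate by coordinate and finish with a union bound over the $d$ coordinates.

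Part 1, however, has a genuine gap at exactly the step you flagged as the crux. The hypothesis $\mathbb{P}(c^{\top}(\widehat{\theta}_j-\theta_0)\le 0)=1/2$ for every $c\neq 0$ says only that each projection $c^{\top}V_j$ has median zero (and, as you correctly deduce, no atom at zero); it does \emph{not} say that $c^{\top}V_j$ is symmetrically distributed about zero. Your characteristic-function step therefore has no premise to start from: to conclude $\phi_{V_j}(c)=\phi_{V_j}(-c)$ you would need the distributional identity $c^{\top}V_j\overset{d}{=}-c^{\top}V_j$ for all $c$, which is strictly stronger than the median-zero condition. The half-space hypothesis is what is usually called angular (or halfspace) symmetry about $\theta_0$, and it is strictly weaker than central symmetry: for instance, take $V=R\,U$ with $U$ uniform on the unit sphere and the radius $R$ a nonconstant function of the direction $U$; every open or closed half-space through the origin still carries probability $1/2$, yet $V$ and $-V$ have different laws. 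Since Wendel's proof works by flipping the signs of the points independently and using the equidistribution of the $2^{B}$ configurations $(\pm V_1,\dots,\pm V_B)$, it genuinely needs $V_j\overset{d}{=}-V_j$, so your reduction to Wendel's original theorem does not go through. The paper sidesteps this by citing Theorem 3 of \cite{hayakawa2021estimating} (see also \cite{wagner2001continuous}), which establishes the Wendel-type identity directly under the weaker half-space median hypothesis; some such extension, rather than a symmetrization of $V_j$, is what the argument requires. Your observations that $\mathbb{P}(c^{\top}V_j=0)=0$ and that $B\ge d+1$ is needed are both correct, but they do not repair this step.
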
 
The proof of~\eqref{eq:half-space-symmetry-coverage} follows from the works of~\cite{wendel1962problem} and~\cite{wagner2001continuous}. The requirement of more than $d + 1$ estimators can be restrictive in practice. This is especially so in near high dimensional problems where the dimension $d$ can grow with the sample size $n$. The proof of~\eqref{eq:each-coordinate-median-bias-coverage} follows by using the union bound on the univariate confidence region in Lemma~\ref{lem:fixed-B-coverage-result}. Furthermore, to obtain~\eqref{eq:each-coordinate-median-bias-coverage} we only assume that the coordinate-wise median bias of the estimators is bounded and this condition is much weaker than the corresponding condition used 
to obtain~\eqref{eq:half-space-symmetry-coverage}.

Inequality~\eqref{eq:each-coordinate-median-bias-coverage} is written with a single parameter $\Delta$ as a bound on the median bias for all coordinates.
It is, however, easy to obtain similar bounds when the median bias is different for different coordinates; see the proof of Lemma~\ref{lem:fixed-B-coverage-result-multivariate} for details. Similarly, we also note that, one need not compute $B$ random vector estimators. One might construct a different number of estimators for $e_k^{\top}\theta_0$ and construct univariate confidence intervals along each coordinate to obtain a multivariate confidence rectangle for $\theta_0$. Formally, if $\widehat{\mathrm{CI}}_{\alpha/d,\Delta}^{(k)}$ is a confidence interval of level $1 - \alpha/d$ for $e_k^{\top}\theta_0$ constructed using Algorithm~\ref{alg:confidence-known-Delta} with (a known) $\Delta$, then 
\begin{equation}\label{eq:union-bound-confidence-region}
\mathbb{P}\left(\theta_0 \in \bigotimes_{k=1}^d \widehat{\mathrm{CI}}_{\alpha/d,\Delta}^{(k)}\right) \ge 1 - \alpha.
\end{equation}
Note that construction of $\widehat{\mathrm{CI}}_{\alpha/d,\Delta}^{(k)}$ requires $B_{\alpha/d,\Delta}$ estimators of $e_k^{\top}\theta_0$. Following inequalities~\eqref{eq:inequalities-B-alpha-Delta}, we conclude that
\[
\max\left\{\left\lceil\frac{\log(d/\alpha)}{\log(2/(1 + 2\Delta))}\right\rceil, \left\lceil\frac{\log(2d/\alpha)}{\log(2)}\right\rceil \right\} \le B_{\alpha/d,\Delta} \le \left\lceil\frac{\log(2d/\alpha)}{\log(2/(1 + 2\Delta))}\right\rceil\quad\Rightarrow\quad B_{\alpha/d,\Delta} \asymp \log(2d/\alpha).
\]
This implies that one only needs to split the original data $X_1, \ldots, X_n$ into (about) $\log(2d/\alpha)$ many batches. In Lemma~\ref{lem:fixed-B-coverage-result-multivariate},~\eqref{eq:each-coordinate-median-bias-coverage} requires $B \ge C\log(2d/\alpha)$ for a constant $C$ for a coverage of $1 - \alpha$. This can be compared with the requirement $B \ge d + 1$ for the validity of~\eqref{eq:half-space-symmetry-coverage}. Hence, for moderate to high dimensional problems, the smallest rectangle is an economical choice. 

Similar to the univariate case, one need not know median bias of $e_k^{\top}\widehat{\theta}_j$ exactly. It suffices to know it approximately as dictated by Proposition~\ref{prop:approximate-to-exact-validity}. The conclusions from Proposition~\ref{prop:approximate-to-exact-validity} continue to hold true even with a growing dimension. For instance, for asymptotically median unbiased estimators, if
\begin{equation}\label{eq:dimension-growing-sample-size-requirement}
\mathfrak{C}'_X\frac{\log^3(2d/\alpha)}{n} ~\le~ \frac{\alpha}{dP(B_{\alpha/d,0}; 0)} - 1,
\end{equation}
for some constant $\mathfrak{C}'_X$, then irrespective of the dimension $d\ge1$, we obtain
\[
\mathbb{P}\left(\theta_0 \in \mbox{RectHull}(\{\widehat{\theta}_j:\,1\le j\le B_{\alpha/d,0}\})\right) \ge 1 - \alpha.
\] 
It follows from Figures~\ref{fig:right-hand-side-of-consistency-subsampling-scaled} \&~\ref{fig:right-hand-side-of-consistency-subsampling-median-unbiased} that the right hand side of~\eqref{eq:dimension-growing-sample-size-requirement} can be as large as $0.4$ for certain choices of $\alpha$, even for $d\gg n$. Similarly, Theorem~\ref{thm:infinite-order-coverage} (in particular its implication~\eqref{eq:conclusion-theorem-BE-bound}) yields 
\begin{equation}\label{eq:usual-median-bias-union-bnd}
\mathbb{P}\left(\theta_0 \notin \bigotimes_{k=1}^d \widehat{\mathrm{CI}}_{\alpha/d,0}^{(k)}\right) \le \sum_{k=1}^d \mathbb{P}\left(\theta_0 \notin \widehat{\mathrm{CI}}_{\alpha/d,0}^{(k)}\right) \le \sum_{k=1}^d \frac{\alpha}{d}\left(1 + \mathfrak{C}_X'\frac{B_{\alpha/d,0}^3}{n}\right) = \alpha\left(1 + \mathfrak{C}_X'\frac{B_{\alpha/d,0}^3}{n}\right).
\end{equation}
Here the union bound is valid irrespective of what the dimension $d$ is relative to the sample size $n$. If the estimators are median bias reduced, then the same argument as above yields
\begin{equation}\label{eq:less-median-bias-union-bnd}
\mathbb{P}\left(\theta_0 \notin \bigotimes_{k=1}^d \widehat{\mathrm{CI}}_{\alpha/d,0}^{(k)}\right) ~\le~ \alpha\left(1 + \mathfrak{C}_X'\frac{B_{\alpha/d,0}^5}{n^3}\right).
\end{equation}
Note, once again, that $B_{\alpha/d,0} \asymp \log(2d/\alpha)$ and hence the miscoverage probabilities are bounded by $\alpha(1 + \mathfrak{C}_X'\log^3(d/\alpha)/n)$ (under~\eqref{eq:BE-median-bias}) and $\alpha(1 +  \mathfrak{C}_X'\log^5(d/\alpha)/n^3)$ (under median bias reduction).

We see that we only require
$\log(d/\alpha) = o(n^{1/3})$ 
(or $\log(d/\alpha) = o(n^{3/5})$ in the bias reduced case)
and we do not require joint/multivariate distributional 
convergence whatsoever. This can be contrasted with the results from
the literature on high-dimensional central limit
theorems~\citep{belloni2018high,koike2020notes,fang2020high,deng2020slightly,chernozhukov2020nearly}.
These results 
concern
Gaussian approximation for high-dimensional averages. Under certain
moment assumptions on the \emph{joint} distribution, these results
imply a joint Gaussian approximation with a minimum requirement
of $\log(d) = o(n^{1/3})$~\citep[Proposition
  1.1]{fang2020high}. \citet[Theorem 2.3]{belloni2018high} uses the union bound based on moderate deviations but still
  requires joint moment conditions and the 
  condition that  
$\log(d) =
o(n^{1/3})$. With usual
estimators, the \hulc also has the same dimensionality requirement while
only making use of marginal median bias. With median bias reduced
estimators, the \hulc only requires $\log(d) = o(n^{3/5})$ and this is even
weaker than $\log(d) = o(n^{1/2})$, which is the best possible
dimension restriction for a Gaussian approximation~\citep[Theorem
  3]{das2020central}. It is worth mentioning that by slightly
enlarging the bootstrap confidence regions, the dimensionality
requirement can be reduced to $\log(d) =
o(n)$ in the case of mean estimation~\citep{deng2020slightly}.

Although we have used the union bound above to obtain a coverage of $1-\alpha$ for a multivariate parameter, we only required \emph{asymptotic} median unbiasedness of $e_k^{\top}\widehat{\theta}_j$ marginally. There is no requirement whatsoever on the asymptotic joint convergence or symmetry of $\widehat{\theta}_j\in\mathbb{R}^d$. Interestingly, such a result is not possible with the usual confidence intervals. This point is discussed further in Section~\ref{subsec:union-bound-Wald} of the supplementary material.

\section{Conclusions and Future Directions}\label{sec:conclusions}
In this paper, we developed and analyzed a simple and broadly applicable method, the \hulc, 
for constructing confidence sets, using the convex hull of estimates constructed on independent subsamples
of the data. 
{\color{black}All the \hulc intervals presented have an asymptotic coverage either equal to or greater than $1-\alpha$.}
The \hulc bypasses the difficult problem of estimating nuisance components in the limiting distribution,
requires fewer regularity conditions than the bootstrap and unlike subsampling 
does not require knowledge of the rate of convergence of the underlying estimates on which it is based. 
These advantages, in many cases, come at a surprisingly small price in the width of the interval. The width of the intervals
are determined in general by the accuracy of the underlying estimators, as well as their median bias. 
We also present two variants, 
the \ahulc which estimates the median bias using subsampling, and the \uhulc which can be useful even in cases when the median bias is large so
long as the limiting distribution
is unimodal. Beyond these methodological contributions, we also studied several challenging confidence set construction problems 
and showed how our methods can often provide simple solutions to these problems. 

From a computational standpoint, the \hulc only requires
computing the estimator $B$ times where $B$ is typically around 5 or
10, and so is less computationally intensive than the bootstrap. In cases where the underlying estimator has computational 
complexity which is super-linear in the number of samples, computing $B$ estimates on $n/B$ samples can in fact be cheaper
than computing a single estimate on the whole dataset. {\color{black}These considerations are relevant no matter the dimension of the estimator.}

Intuitively, the \hulc is also quite robust in the sense that we only use
qualitative properties, such as an upper bound on the median bias, of the
limiting distribution rather than its exact form. In finite samples, the
distribution of a statistic might be close to symmetric
even if it does not resemble a Gaussian distribution. 

Our analysis has not discussed the important problem
of obtaining confidence intervals with uniform coverage.
This is especially
important in irregular problems
where the rate of convergence
and limiting distribution can vary
across the parameter space. Developing this understanding is an important open problem that 
we plan to address in future work.

\section*{Acknowledgements}
We are grateful to Prof. Hannes Leeb for helpful comments, and for pointing out a gap in an earlier version of Theorem~\ref{thm:coverage-algorithm-known-Delta} and Proposition~\ref{prop:approximate-to-exact-validity}. We also thank Jin-Hong Du for his help with Figures~\ref{fig:comparison-coverage-lm}--\ref{fig:mean-square-subsampling}. We also thank the reviewers and the associate editor for their constructive comments that led to several improvements of the paper.
\bibliographystyle{apalike}
\bibliography{references}

\begin{thebibliography}{}

\bibitem[Abadie and Imbens, 2008]{abadie2008failure}
Abadie, A. and Imbens, G.~W. (2008).
\newblock On the failure of the bootstrap for matching estimators.
\newblock {\em Econometrica}, 76(6):1537--1557.

\bibitem[Andrews, 2000]{andrews2000inconsistency}
Andrews, D. W.~K. (2000).
\newblock Inconsistency of the bootstrap when a parameter is on the boundary of
  the parameter space.
\newblock {\em Econometrica}, 68(2):399--405.

\bibitem[Andrews and Guggenberger, 2010]{andrews2010asymptotic}
Andrews, D. W.~K. and Guggenberger, P. (2010).
\newblock Asymptotic size and a problem with subsampling and with the {$m$} out
  of {$n$} bootstrap.
\newblock {\em Econometric Theory}, 26(2):426--468.

\bibitem[Andrews and Phillips, 1987]{andrews1987best}
Andrews, D. W.~K. and Phillips, P. C.~B. (1987).
\newblock Best median-unbiased estimation in linear regression with bounded
  asymmetric loss functions.
\newblock {\em J. Amer. Statist. Assoc.}, 82(399):886--893.

\bibitem[Athreya, 1987]{athreya1987bootstrap}
Athreya, K.~B. (1987).
\newblock Bootstrap of the mean in the infinite variance case.
\newblock {\em Ann. Statist.}, 15(2):724--731.

\bibitem[Belloni et~al., 2018]{belloni2018high}
Belloni, A., Chernozhukov, V., Chetverikov, D., Hansen, C., and Kato, K.
  (2018).
\newblock High-dimensional econometrics and regularized {GMM}.
\newblock {\em arXiv preprint arXiv:1806.01888}.

\bibitem[Bentkus, 2005]{bentkus2005lyapunov}
Bentkus, V. (2005).
\newblock A {L}yapunov type bound in {${\bf R}^d$}.
\newblock {\em Theory Probab. Appl.}, 49(2):311--323.

\bibitem[Bentkus et~al., 1997]{bentkus1997berry}
Bentkus, V., Bloznelis, M., and G\"{o}tze, F. (1997).
\newblock A {B}erry-{E}ss\'{e}en bound for {$M$}-estimators.
\newblock {\em Scand. J. Statist.}, 24(4):485--502.

\bibitem[Bertail and Politis, 2001]{bertail2001extrapolation}
Bertail, P. and Politis, D.~N. (2001).
\newblock Extrapolation of subsampling distribution estimators: the i.i.d. and
  strong mixing cases.
\newblock {\em Canad. J. Statist.}, 29(4):667--680.

\bibitem[Bertail et~al., 1999]{bertail1999subsampling}
Bertail, P., Politis, D.~N., and Romano, J.~P. (1999).
\newblock On subsampling estimators with unknown rate of convergence.
\newblock {\em J. Amer. Statist. Assoc.}, 94(446):569--579.

\bibitem[Bickel, 1982]{bickel1982adaptive}
Bickel, P.~J. (1982).
\newblock On adaptive estimation.
\newblock {\em Ann. Statist.}, 10(3):647--671.

\bibitem[Bickel et~al., 1993]{bickel1993efficient}
Bickel, P.~J., Klaassen, C. A.~J., Ritov, Y., and Wellner, J.~A. (1993).
\newblock {\em Efficient and adaptive estimation for semiparametric models}.
\newblock Johns Hopkins Series in the Mathematical Sciences. Johns Hopkins
  University Press, Baltimore, MD.

\bibitem[Birnbaum, 1964]{birnbaum1964median}
Birnbaum, A. (1964).
\newblock Median-unbiased estimators.
\newblock {\em Bull. Math. Statist.}, 11(1-2):25--34.

\bibitem[Borges, 6970]{borges1970approximation}
Borges, R. (1969/70).
\newblock Eine {A}pproximation der {B}inomialverteilung durch die
  {N}ormalverteilung der {O}rdnung {$1/n$}.
\newblock {\em Z. Wahrscheinlichkeitstheorie und Verw. Gebiete}, 14:189--199.

\bibitem[Borges, 1971]{borges1971derivation}
Borges, R. (1971).
\newblock Derivation of normalizing transformations with an error of order
  {$1/n$}.
\newblock {\em Sankhy\={a} Ser. A}, 33:441--460.

\bibitem[Boucheron and Thomas, 2012]{boucheron2012concentration}
Boucheron, S. and Thomas, M. (2012).
\newblock Concentration inequalities for order statistics.
\newblock {\em Electron. Commun. Probab.}, 17:no. 51, 12.

\bibitem[Breiman, 1992]{breiman1968probability}
Breiman, L. (1992).
\newblock {\em Probability}, volume~7 of {\em Classics in Applied Mathematics}.
\newblock Society for Industrial and Applied Mathematics (SIAM), Philadelphia,
  PA.
\newblock Corrected reprint of the 1968 original.

\bibitem[Brown et~al., 2001]{brown2001interval}
Brown, L.~D., Cai, T.~T., and DasGupta, A. (2001).
\newblock Interval estimation for a binomial proportion.
\newblock {\em Statist. Sci.}, 16(2):101--133.
\newblock With comments and a rejoinder by the authors.

\bibitem[Brown et~al., 2002]{brown2002confidence}
Brown, L.~D., Cai, T.~T., and DasGupta, A. (2002).
\newblock Confidence intervals for a binomial proportion and asymptotic
  expansions.
\newblock {\em Ann. Statist.}, 30(1):160--201.

\bibitem[Brown et~al., 1976]{brown1976complete}
Brown, L.~D., Cohen, A., and Strawderman, W.~E. (1976).
\newblock A complete class theorem for strict monotone likelihood ratio with
  applications.
\newblock {\em Ann. Statist.}, 4(4):712--722.

\bibitem[Buja et~al., 2019]{buja2019models}
Buja, A., Brown, L., Berk, R., George, E., Pitkin, E., Traskin, M., Zhang, K.,
  and Zhao, L. (2019).
\newblock Models as approximations {I}: consequences illustrated with linear
  regression.
\newblock {\em Statist. Sci.}, 34(4):523--544.

\bibitem[Cabrera and Watson, 1997]{cabrera1997simulation}
Cabrera, J. and Watson, G. (1997).
\newblock Simulation methods for mean and median bias reduction in parametric
  estimation.
\newblock {\em Journal of statistical planning and inference}, 57(1):143--152.

\bibitem[Chakravarti et~al., 2019]{chakravarti2019gaussian}
Chakravarti, P., Balakrishnan, S., and Wasserman, L. (2019).
\newblock Gaussian mixture clustering using relative tests of fit.
\newblock {\em arXiv preprint arXiv:1910.02566}.

\bibitem[Chernozhukov et~al., 2020]{chernozhukov2020nearly}
Chernozhukov, V., Chetverikov, D., and Koike, Y. (2020).
\newblock Nearly optimal central limit theorem and bootstrap approximations in
  high dimensions.
\newblock {\em arXiv preprint arXiv:2012.09513}.

\bibitem[Das and Lahiri, 2021]{das2020central}
Das, D. and Lahiri, S. (2021).
\newblock Central {L}imit {T}heorem in high dimensions: the optimal bound on
  dimension growth rate.
\newblock {\em Trans. Amer. Math. Soc.}, 374(10):6991--7009.

\bibitem[Deng, 2020]{deng2020slightly}
Deng, H. (2020).
\newblock Slightly conservative bootstrap for maxima of sums.
\newblock {\em arXiv preprint arXiv:2007.15877}.

\bibitem[Deng et~al., 2022]{deng2020inference}
Deng, H., Han, Q., and Sen, B. (2022).
\newblock Inference for local parameters in convexity constrained models.
\newblock {\em Journal of the American Statistical Association},
  (just-accepted):1--33.

\bibitem[Deng et~al., 2021]{deng2020confidence}
Deng, H., Han, Q., and Zhang, C.-H. (2021).
\newblock Confidence intervals for multiple isotonic regression and other
  monotone models.
\newblock {\em Ann. Statist.}, 49(4):2021--2052.

\bibitem[Desu and Rodine, 1969]{mahamunulu1969estimation}
Desu, M.~M. and Rodine, R.~H. (1969).
\newblock Estimation of the population median.
\newblock {\em Scandinavian Actuarial Journal}, 1969(1-2):67--70.

\bibitem[Doerr, 2018]{doerr2018elementary}
Doerr, B. (2018).
\newblock An elementary analysis of the probability that a binomial random
  variable exceeds its expectation.
\newblock {\em Statist. Probab. Lett.}, 139:67--74.

\bibitem[D\"{u}mbgen, 1993]{dumbgen1993nondifferentiable}
D\"{u}mbgen, L. (1993).
\newblock On nondifferentiable functions and the bootstrap.
\newblock {\em Probab. Theory Related Fields}, 95(1):125--140.

\bibitem[Durot, 2008]{durot2008monotone}
Durot, C. (2008).
\newblock Monotone nonparametric regression with random design.
\newblock {\em Math. Methods Statist.}, 17(4):327--341.

\bibitem[Efron, 1979]{efron1979bootstrap}
Efron, B. (1979).
\newblock Bootstrap methods: another look at the jackknife.
\newblock {\em Ann. Statist.}, 7(1):1--26.

\bibitem[Efron, 1982]{efron1982transformation}
Efron, B. (1982).
\newblock Transformation theory: how normal is a family of distributions?
\newblock {\em Ann. Statist.}, 10(2):323--339.

\bibitem[Fang and Koike, 2021]{fang2020high}
Fang, X. and Koike, Y. (2021).
\newblock High-dimensional central limit theorems by {S}tein's method.
\newblock {\em Ann. Appl. Probab.}, 31(4):1660--1686.

\bibitem[Fang and Santos, 2019]{fang2019inference}
Fang, Z. and Santos, A. (2019).
\newblock Inference on directionally differentiable functions.
\newblock {\em Rev. Econ. Stud.}, 86(1):377--412.

\bibitem[Firth, 1993]{firth1993bias}
Firth, D. (1993).
\newblock Bias reduction of maximum likelihood estimates.
\newblock {\em Biometrika}, 80(1):27--38.

\bibitem[Gebhardt, 1969]{gebhardt1969some}
Gebhardt, F. (1969).
\newblock Some numerical comparisons of several approximations to the binomial
  distribution.
\newblock {\em J. Amer. Statist. Assoc.}, 64(328):1638--1646.

\bibitem[Greenberg and Mohri, 2014]{greenberg2014tight}
Greenberg, S. and Mohri, M. (2014).
\newblock Tight lower bound on the probability of a binomial exceeding its
  expectation.
\newblock {\em Statist. Probab. Lett.}, 86:91--98.

\bibitem[Guntuboyina and Sen, 2018]{guntuboyina2018nonparametric}
Guntuboyina, A. and Sen, B. (2018).
\newblock Nonparametric shape-restricted regression.
\newblock {\em Statist. Sci.}, 33(4):568--594.

\bibitem[Hall, 1982]{hall1982estimating}
Hall, P. (1982).
\newblock On estimating the endpoint of a distribution.
\newblock {\em Ann. Statist.}, 10(2):556--568.

\bibitem[Hall, 1986]{hall1986bootstrap}
Hall, P. (1986).
\newblock On the bootstrap and confidence intervals.
\newblock {\em Ann. Statist.}, 14(4):1431--1452.

\bibitem[Hall, 1988]{hall1988theoretical}
Hall, P. (1988).
\newblock Theoretical comparison of bootstrap confidence intervals.
\newblock {\em Ann. Statist.}, 16(3):927--985.
\newblock With a discussion and a reply by the author.

\bibitem[Hall, 1992]{hall2013bootstrap}
Hall, P. (1992).
\newblock {\em The bootstrap and {E}dgeworth expansion}.
\newblock Springer Series in Statistics. Springer-Verlag, New York.

\bibitem[Hamza, 1995]{hamza1995smallest}
Hamza, K. (1995).
\newblock The smallest uniform upper bound on the distance between the mean and
  the median of the binomial and {P}oisson distributions.
\newblock {\em Statist. Probab. Lett.}, 23(1):21--25.

\bibitem[Han and Kato, 2022]{han2019berry}
Han, Q. and Kato, K. (2022).
\newblock {B}erry--{E}sseen bounds for {C}hernoff-type nonstandard asymptotics
  in isotonic regression.
\newblock {\em Ann. Appl. Probab.}, 32(2):1459--1498.

\bibitem[Hartigan, 1969]{hartigan1969using}
Hartigan, J.~A. (1969).
\newblock Using subsample values as typical values.
\newblock {\em J. Amer. Statist. Assoc.}, 64:1303--1317.

\bibitem[Hartigan, 1970]{hartigan1970exact}
Hartigan, J.~A. (1970).
\newblock Exact confidence intervals in regression problems with independent
  symmetric errors.
\newblock {\em Ann. Math. Statist.}, 41:1992--1998.

\bibitem[Hayakawa et~al., 2021]{hayakawa2021estimating}
Hayakawa, S., Lyons, T., and Oberhauser, H. (2021).
\newblock Estimating the probability that a given vector is in the convex hull
  of a random sample.
\newblock {\em arXiv preprint arXiv:2101.04250}.

\bibitem[Hirano and Porter, 2012]{hirano2012impossibility}
Hirano, K. and Porter, J.~R. (2012).
\newblock Impossibility results for nondifferentiable functionals.
\newblock {\em Econometrica}, 80(4):1769--1790.

\bibitem[Hirji et~al., 1989]{hirji1989median}
Hirji, K.~F., Tsiatis, A.~A., and Mehta, C.~R. (1989).
\newblock Median unbiased estimation for binary data.
\newblock {\em Amer. Statist.}, 43(1):7--11.

\bibitem[Hoeffding, 1963]{hoeffding1963probability}
Hoeffding, W. (1963).
\newblock Probability inequalities for sums of bounded random variables.
\newblock {\em J. Amer. Statist. Assoc.}, 58:13--30.

\bibitem[Ibragimov and M\"{u}ller, 2010]{Ibragimov2010}
Ibragimov, R. and M\"{u}ller, U.~K. (2010).
\newblock {$t$}-statistic based correlation and heterogeneity robust inference.
\newblock {\em J. Bus. Econom. Statist.}, 28(4):453--468.

\bibitem[Jing et~al., 2003]{jing2003self}
Jing, B.-Y., Shao, Q.-M., and Wang, Q. (2003).
\newblock Self-normalized {C}ram\'{e}r-type large deviations for independent
  random variables.
\newblock {\em Ann. Probab.}, 31(4):2167--2215.

\bibitem[John, 1974]{john1974median}
John, S. (1974).
\newblock Median-unbiased most acceptable estimates of {P}oisson, binomial and
  negative-binomial distributions.
\newblock {\em Comm. Statist.}, 3:1155--1159.

\bibitem[Kabluchko and Zaporozhets, 2019]{kabluchko2019expected}
Kabluchko, Z. and Zaporozhets, D. (2019).
\newblock Expected volumes of {G}aussian polytopes, external angles, and
  multiple order statistics.
\newblock {\em Trans. Amer. Math. Soc.}, 372(3):1709--1733.

\bibitem[Kenne~Pagui et~al., 2017]{kenne2017median}
Kenne~Pagui, E.~C., Salvan, A., and Sartori, N. (2017).
\newblock Median bias reduction of maximum likelihood estimates.
\newblock {\em Biometrika}, 104(4):923--938.

\bibitem[Kim, 2016]{kim2016higher}
Kim, K.~I. (2016).
\newblock Higher order bias correcting moment equation for m-estimation and its
  higher order efficiency.
\newblock {\em Econometrics}, 4(4):48.

\bibitem[Knight, 1989]{knight1989bootstrap}
Knight, K. (1989).
\newblock On the bootstrap of the sample mean in the infinite variance case.
\newblock {\em Ann. Statist.}, 17(3):1168--1175.

\bibitem[Knight, 1998]{knight1998limiting}
Knight, K. (1998).
\newblock Limiting distributions for {$L_1$} regression estimators under
  general conditions.
\newblock {\em Ann. Statist.}, 26(2):755--770.

\bibitem[Knight, 1999]{knight1999asymptotics}
Knight, K. (1999).
\newblock Asymptotics for ${L}_1$-estimators of regression parameters under
  heteroscedasticityy.
\newblock {\em Canadian Journal of Statistics}, 27(3):497--507.

\bibitem[Knight, 2008]{knight2008asymptotics}
Knight, K. (2008).
\newblock Asymptotics of the regression quantile basic solution under
  misspecification.
\newblock {\em Applications of Mathematics}, 53(3):223--234.

\bibitem[Koike, 2021]{koike2020notes}
Koike, Y. (2021).
\newblock Notes on the dimension dependence in high-dimensional central limit
  theorems for hyperrectangles.
\newblock {\em Jpn. J. Stat. Data Sci.}, 4(1):257--297.

\bibitem[Koltchinskii, 2020]{koltchinskii2020estimation}
Koltchinskii, V. (2020).
\newblock Estimation of smooth functionals in high-dimensional models:
  bootstrap chains and gaussian approximation.
\newblock {\em arXiv preprint arXiv:2011.03789}.

\bibitem[Koltchinskii and Zhilova, 2021a]{koltchinskii2021efficient}
Koltchinskii, V. and Zhilova, M. (2021a).
\newblock Efficient estimation of smooth functionals in {G}aussian shift
  models.
\newblock {\em Ann. Inst. Henri Poincar\'{e} Probab. Stat.}, 57(1):351--386.

\bibitem[Koltchinskii and Zhilova, 2021b]{koltchinskii2019estimation}
Koltchinskii, V. and Zhilova, M. (2021b).
\newblock Estimation of smooth functionals in normal models: bias reduction and
  asymptotic efficiency.
\newblock {\em Ann. Statist.}, 49(5):2577--2610.

\bibitem[Kosmidis and Firth, 2009]{kosmidis2009bias}
Kosmidis, I. and Firth, D. (2009).
\newblock Bias reduction in exponential family nonlinear models.
\newblock {\em Biometrika}, 96(4):793--804.

\bibitem[Kosmidis et~al., 2020]{kosmidis2020mean}
Kosmidis, I., Kenne~Pagui, E.~C., and Sartori, N. (2020).
\newblock Mean and median bias reduction in generalized linear models.
\newblock {\em Stat. Comput.}, 30(1):43--59.

\bibitem[Kuchibhotla, 2021]{kuchibhotla2021median}
Kuchibhotla, A.~K. (2021).
\newblock Median bias of {M}-estimators.
\newblock {\em arXiv preprint arXiv:2106.00164}.

\bibitem[Kuchibhotla et~al., 2023]{kuchibhotlamedian}
Kuchibhotla, A.~K., Balakrishnan, S., and Wasserman, L. (2023).
\newblock Median regularity and honest inference.
\newblock {\em Biometrika}, Accepted.

\bibitem[Kuchibhotla et~al., 2021]{kuchibhotla2021semiparametric}
Kuchibhotla, A.~K., Patra, R.~K., and Sen, B. (2021).
\newblock Semiparametric efficiency in convexity constrained single index
  model.
\newblock {\em J. Amer. Statist. Assoc.}

\bibitem[Lam, 2022]{lam2022cheap}
Lam, H. (2022).
\newblock A cheap bootstrap method for fast inference.
\newblock {\em arXiv preprint arXiv:2202.00090}.

\bibitem[Lanke, 1974]{lanke1974interval}
Lanke, J. (1974).
\newblock Interval estimation of a median.
\newblock {\em Scand. J. Statist.}, 1(1):28--32.

\bibitem[Laurent, 1997]{laurent1997estimation}
Laurent, B. (1997).
\newblock Estimation of integral functionals of a density and its derivatives.
\newblock {\em Bernoulli}, 3(2):181--211.

\bibitem[Lehmann, 1959]{Lehmann1959}
Lehmann, E.~L. (1959).
\newblock {\em Testing statistical hypotheses}.
\newblock John Wiley \& Sons, Inc., New York; Chapman \& Hall, Ltd., London.

\bibitem[Lehmann and Casella, 1998]{lehmann2006theory}
Lehmann, E.~L. and Casella, G. (1998).
\newblock {\em Theory of point estimation}.
\newblock Springer Texts in Statistics. Springer-Verlag, New York, second
  edition.

\bibitem[Li and Racine, 2004]{li2004cross}
Li, Q. and Racine, J. (2004).
\newblock Cross-validated local linear nonparametric regression.
\newblock {\em Statist. Sinica}, 14(2):485--512.

\bibitem[Loh, 1984]{loh1984estimating}
Loh, W.-Y. (1984).
\newblock Estimating an endpoint of a distribution with resampling methods.
\newblock {\em Ann. Statist.}, 12(4):1543--1550.

\bibitem[Mammen, 1992]{mammen1992bootstrap}
Mammen, E. (1992).
\newblock Bootstrap, wild bootstrap, and asymptotic normality.
\newblock {\em Probab. Theory Related Fields}, 93(4):439--455.

\bibitem[Massart, 1990]{massart1990tight}
Massart, P. (1990).
\newblock The tight constant in the {D}voretzky-{K}iefer-{W}olfowitz
  inequality.
\newblock {\em Ann. Probab.}, 18(3):1269--1283.

\bibitem[Pfanzagl, 1970a]{pfanzagl1970median}
Pfanzagl, J. (1970a).
\newblock Median unbiased estimates for {M}. {L}. {R}.-families.
\newblock {\em Metrika}, 15:30--39.

\bibitem[Pfanzagl, 1970b]{pfanzagl1970asymptotic}
Pfanzagl, J. (1970b).
\newblock On the asymptotic efficiency of median unbiased estimates.
\newblock {\em Ann. Math. Statist.}, 41:1500--1509.

\bibitem[Pfanzagl, 1971]{pfanzagl1971berry}
Pfanzagl, J. (1971).
\newblock The {B}erry-{E}sseen bound for minimum contrast estimates.
\newblock {\em Metrika}, 17:82--91.

\bibitem[Pfanzagl, 7172]{pfanzagl1972median}
Pfanzagl, J. (1971/72).
\newblock On median unbiased estimates.
\newblock {\em Metrika}, 18:154--173.

\bibitem[Pfanzagl, 1973a]{pfanzagl1973accuracy}
Pfanzagl, J. (1973a).
\newblock Asymptotic expansions related to minimum contrast estimators.
\newblock {\em Ann. Statist.}, 1:993--1026.

\bibitem[Pfanzagl, 1973b]{pfanzagl1973asymptotic}
Pfanzagl, J. (1973b).
\newblock Asymptotic expansions related to minimum contrast estimators.
\newblock {\em The Annals of Statistics}, pages 993--1026.

\bibitem[Pfanzagl, 1979]{pfanzagl1979optimal}
Pfanzagl, J. (1979).
\newblock On optimal median unbiased estimators in the presence of nuisance
  parameters.
\newblock {\em Ann. Statist.}, 7(1):187--193.

\bibitem[Pfanzagl, 1994]{pfanzagl2011parametric}
Pfanzagl, J. (1994).
\newblock {\em Parametric statistical theory}.
\newblock De Gruyter Textbook. Walter de Gruyter \& Co., Berlin.
\newblock With the assistance of R. Hamb\"{o}ker.

\bibitem[Pfanzagl, 2017]{pfanzagl2017optimality}
Pfanzagl, J. (2017).
\newblock Optimality of unbiased estimators: Nonasymptotic theory.
\newblock In {\em Mathematical Statistics}, Springer Series in Statistics,
  pages 83--106. Springer-Verlag, Berlin.
\newblock Essays on history and methodology, Springer Series in Statistics.
  Perspectives in Statistics.

\bibitem[Pinelis, 2017]{pinelis2017optimal}
Pinelis, I. (2017).
\newblock Optimal-order uniform and nonuniform bounds on the rate of
  convergence to normality for maximum likelihood estimators.
\newblock {\em Electron. J. Stat.}, 11(1):1160--1179.

\bibitem[Politis and Romano, 1994]{politis1994large}
Politis, D.~N. and Romano, J.~P. (1994).
\newblock Large sample confidence regions based on subsamples under minimal
  assumptions.
\newblock {\em Ann. Statist.}, 22(4):2031--2050.

\bibitem[Read, 2004]{read2004median}
Read, C.~B. (2004).
\newblock Median unbiased estimators.
\newblock {\em Encyclopedia of statistical sciences}, 7.

\bibitem[Rinaldo et~al., 2019]{rinaldo2019bootstrapping}
Rinaldo, A., Wasserman, L., and G'Sell, M. (2019).
\newblock Bootstrapping and sample splitting for high-dimensional,
  assumption-lean inference.
\newblock {\em Ann. Statist.}, 47(6):3438--3469.

\bibitem[Robins, 2004]{robins2004optimal}
Robins, J.~M. (2004).
\newblock Optimal structural nested models for optimal sequential decisions.
\newblock In {\em Proceedings of the {S}econd {S}eattle {S}ymposium in
  {B}iostatistics}, volume 179 of {\em Lect. Notes Stat.}, pages 189--326.
  Springer, New York.

\bibitem[Robson and Whitlock, 1964]{robson1964estimation}
Robson, D.~S. and Whitlock, J.~H. (1964).
\newblock Estimation of a truncation point.
\newblock {\em Biometrika}, 51:33--39.

\bibitem[Romano and Wolf, 1999]{romano1999subsampling}
Romano, J.~P. and Wolf, M. (1999).
\newblock Subsampling inference for the mean in the heavy-tailed case.
\newblock {\em Metrika}, 50(1):55--69.

\bibitem[Sen, 1968]{sen1968asymptotic}
Sen, P.~K. (1968).
\newblock Asymptotic normality of sample quantiles for {$m$}-dependent
  processes.
\newblock {\em Ann. Math. Statist.}, 39:1724--1730.

\bibitem[Shao and Tu, 1995]{shao2012jackknife}
Shao, J. and Tu, D.~S. (1995).
\newblock {\em The jackknife and bootstrap}.
\newblock Springer Series in Statistics. Springer-Verlag, New York.

\bibitem[Shao, 1997]{shao1997self}
Shao, Q.-M. (1997).
\newblock Self-normalized large deviations.
\newblock {\em Ann. Probab.}, 25(1):285--328.

\bibitem[Sherman and Carlstein, 2004]{sherman1997omnibus}
Sherman, M. and Carlstein, E. (2004).
\newblock Confidence intervals based on estimators with unknown rates of
  convergence.
\newblock {\em Comput. Statist. Data Anal.}, 46(1):123--139.

\bibitem[Stigler, 2007]{stigler2007epic}
Stigler, S.~M. (2007).
\newblock The epic story of maximum likelihood.
\newblock {\em Statist. Sci.}, 22(4):598--620.

\bibitem[van~der Vaart, 1998]{van2000asymptotic}
van~der Vaart, A.~W. (1998).
\newblock {\em Asymptotic statistics}, volume~3 of {\em Cambridge Series in
  Statistical and Probabilistic Mathematics}.
\newblock Cambridge University Press, Cambridge.

\bibitem[Wagner and Welzl, 2001]{wagner2001continuous}
Wagner, U. and Welzl, E. (2001).
\newblock A continuous analogue of the upper bound theorem.
\newblock volume~26, pages 205--219.
\newblock ACM Symposium on Computational Geometry (Hong Kong, 2000).

\bibitem[Wasserman et~al., 2020]{wasserman2020universal}
Wasserman, L., Ramdas, A., and Balakrishnan, S. (2020).
\newblock Universal inference.
\newblock {\em Proc. Natl. Acad. Sci. USA}, 117(29):16880--16890.

\bibitem[Wendel, 1962]{wendel1962problem}
Wendel, J.~G. (1962).
\newblock A problem in geometric probability.
\newblock {\em Math. Scand.}, 11:109--111.

\bibitem[Wright, 1981]{wright1981asymptotic}
Wright, F.~T. (1981).
\newblock The asymptotic behavior of monotone regression estimates.
\newblock {\em Ann. Statist.}, 9(2):443--448.

\bibitem[Zhang and Liang, 2011]{zhang2011berry}
Zhang, J.-J. and Liang, H.-Y. (2011).
\newblock Berry-{E}sseen type bounds in heteroscedastic semi-parametric model.
\newblock {\em J. Statist. Plann. Inference}, 141(11):3447--3462.

\end{thebibliography}
\newpage
\setcounter{section}{0}
\setcounter{equation}{0}
\setcounter{figure}{0}
\setcounter{section}{0}
\setcounter{equation}{0}
\setcounter{figure}{0}
\renewcommand{\thesection}{S.\arabic{section}}
\renewcommand{\theequation}{E.\arabic{equation}}
\renewcommand{\thefigure}{S.\arabic{figure}}
\begin{center}
\Large {\bf Supplement to\\ ``The \hulcnospace: Confidence Regions from Convex Hulls''}
\end{center}
       
\begin{abstract}
This supplement contains the proofs of all the main results in the paper. 
\end{abstract}

\section{Union bound with Wald intervals}\label{subsec:union-bound-Wald}
If, for each $1\le k\le d$, the estimators $e_k^{\top}\widehat{\theta}_j$ are asymptotically normal, then asymptotic normality implies that for all $\gamma\in(0, 1)$,
\[
\left|\mathbb{P}\left(e_k^{\top}\theta_0 \in \widehat{\mathrm{CI}}_{\gamma}^{\texttt{Wald}, k}\right) - (1 - \gamma)\right| \le \delta_n,
\]
for some $\delta_n$ converging to zero as $n\to\infty$; an example is~\eqref{eq:BE-bound-general}. First order accurate confidence intervals (such as Wald's) satisfy $\delta_n = O(n^{-1/2})$, second order accurate ones satisfy $\delta_n = O(n^{-1})$ and so on. Taking $\gamma = \alpha/d$ and applying the union bound, we only obtain
\[
\mathbb{P}\left(\bigcup_{k=1}^d\left\{e_k^{\top}\theta_0 \notin \widehat{\mathrm{CI}}_{\alpha/d}^{\texttt{Wald}, k}\right\}\right) \le \sum_{k=1}^d \mathbb{P}\left(e_k^{\top}\theta_0 \notin \widehat{\mathrm{CI}}_{\alpha/d}^{\texttt{Wald}, k}\right) \le \alpha + d\delta_n.
\]
In order for the right hand side to be $\alpha$ asymptotically, one needs $d\delta_n = o(1)$. This is a very stringent requirement, especially when the dimension $d$ grows faster than the sample size $n$. 

There is a simple way to resolve this issue following our proposed methodology. The idea is to construct $1/2$ Wald confidence regions for each coordinate $e_k^{\top}\theta_0$ from each of $e_k^{\top}\widehat{\theta}_j, 1\le j\le B$ and then take the union of these regions. Formally, set $\widebar{B}_{d,\alpha} = \lceil\log(d/\alpha)/\log(2)\rceil$. For $1\le j\le \widebar{B}_{d,\alpha}$, suppose $\widehat{\mathrm{CI}}_{j}^{\texttt{Wald}, k}$ is the Wald confidence region of coverage $1/2$ based on $e_k^{\top}\widehat{\theta}_j$. This means that
\[
\left|\mathbb{P}\left(e_k^{\top}\theta_0 \in \widehat{\mathrm{CI}}_{j}^{\texttt{Wald}, k}\right) - \frac{1}{2}\right| \le \delta_{n,d},\quad\mbox{for all}\quad 1\le j\le \widebar{B}_{d,\alpha}, 1\le k\le d.
\]
The right hand side $\delta_{n,d}$ here depends on the dimension $d$ because $\widehat{\theta}_j$ is computed based on $n/\widebar{B}_{d/\alpha}$ many observations. 
Under these conditions, the following result provides a valid $1 - \alpha$ confidence regions using Wald's confidence intervals and a union bound. The interesting aspect (similar to~\eqref{eq:infinite-order-coverage}) is that the coverage implied by Proposition~\ref{prop:Wald-union-bound} is eventually finite sample (i.e., holds after some sample size) even though the coverage of $\widehat{\mathrm{CI}}_j^{\texttt{Wald}, k}$ is asymptotic.
\begin{prop}\label{prop:Wald-union-bound}
If
\begin{equation}\label{eq:Wald-sample-size-condition}
\widebar{B}_{d,\alpha}\delta_{n,d} ~\le~ \widebar{B}_{d,\alpha}\left[\left(\frac{\alpha}{d}\right)^{1/\widebar{B}_{d,\alpha}} - \frac{1}{2}\right],
\end{equation}
then
\[
\mathbb{P}\left(\theta_0 \notin \bigotimes_{k=1}^d\bigcup_{j=1}^{\widebar{B}_{d,\alpha}}\widehat{\mathrm{CI}}_{j}^{\texttt{Wald}, k}\right) ~\le~ \alpha.
\]
\end{prop}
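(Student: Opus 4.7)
The plan is to combine the near-$1/2$ coverage of each individual Wald confidence interval with independence across the $\widebar{B}_{d,\alpha}$ splits to get an exponentially small miscoverage for the union along each coordinate, and then apply a standard union bound across the $d$ coordinates. The assumed bound $|\mathbb{P}(e_k^\top \theta_0 \in \widehat{\mathrm{CI}}_j^{\texttt{Wald},k}) - 1/2| \le \delta_{n,d}$ gives $\mathbb{P}(e_k^\top \theta_0 \notin \widehat{\mathrm{CI}}_j^{\texttt{Wald},k}) \le 1/2 + \delta_{n,d}$ for each $j$ and each $k$; independence across $j$ then allows us to multiply rather than union-bound along the inner union.

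First, I would fix a coordinate $k \in \{1,\ldots,d\}$ and observe that, because $\widehat{\mathrm{CI}}_j^{\texttt{Wald},k}$ is a deterministic function of $\widehat{\theta}_j$ (computed on the $j$-th split), the events $\{e_k^\top \theta_0 \notin \widehat{\mathrm{CI}}_j^{\texttt{Wald},k}\}$ are independent across $j = 1,\ldots,\widebar{B}_{d,\alpha}$. Therefore
\[
\mathbb{P}\Big(e_k^\top \theta_0 \notin \bigcup_{j=1}^{\widebar{B}_{d,\alpha}} \widehat{\mathrm{CI}}_{j}^{\texttt{Wald},k}\Big)
= \prod_{j=1}^{\widebar{B}_{d,\alpha}} \mathbb{P}\big(e_k^\top \theta_0 \notin \widehat{\mathrm{CI}}_{j}^{\texttt{Wald},k}\big)
\le \left(\tfrac{1}{2} + \delta_{n,d}\right)^{\widebar{B}_{d,\alpha}}.
\]

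Next, a union bound over the $d$ coordinates gives
\[
\mathbb{P}\Big(\theta_0 \notin \bigotimes_{k=1}^d \bigcup_{j=1}^{\widebar{B}_{d,\alpha}} \widehat{\mathrm{CI}}_{j}^{\texttt{Wald},k}\Big)
\le d \left(\tfrac{1}{2} + \delta_{n,d}\right)^{\widebar{B}_{d,\alpha}}.
\]
It then suffices to verify that the right-hand side is at most $\alpha$, which is equivalent (after taking $\widebar{B}_{d,\alpha}$-th roots and subtracting $1/2$) to $\delta_{n,d} \le (\alpha/d)^{1/\widebar{B}_{d,\alpha}} - 1/2$. Multiplying through by $\widebar{B}_{d,\alpha}$ recovers condition~\eqref{eq:Wald-sample-size-condition} exactly. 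Note that because $\widebar{B}_{d,\alpha} = \lceil \log_2(d/\alpha)\rceil$ we have $(\alpha/d)^{1/\widebar{B}_{d,\alpha}} \ge 1/2$, so the right-hand side of the constraint on $\delta_{n,d}$ is always nonnegative, and the proposition is vacuously satisfied when $\delta_{n,d} = 0$.

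There is essentially no technical obstacle here: the argument is three lines once independence of the inner-split estimators is acknowledged. The only point worth flagging in the write-up is that this independence is what allows us to avoid an inner union bound (which would lose a factor of $\widebar{B}_{d,\alpha}$) and is precisely what makes the construction competitive with the $\mathrm{\hulcnospace}$-style approaches elsewhere in the paper — the $d$-dependence enters only through $\log(d/\alpha)$ in $\widebar{B}_{d,\alpha}$ and through the Berry--Esseen-type remainder $\delta_{n,d}$, rather than through an unmitigated factor of $d$ multiplying $\delta_{n,d}$ as in the naive single-interval union bound described just before the statement.
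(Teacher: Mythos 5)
Your proof is correct and follows essentially the same route as the paper: the paper likewise invokes the independence-product argument of Lemma~\ref{lem:fixed-B-coverage-result} to get $\mathbb{P}\bigl(e_k^{\top}\theta_0 \notin \bigcup_{j}\widehat{\mathrm{CI}}_{j}^{\texttt{Wald},k}\bigr) \le (1/2+\delta_{n,d})^{\widebar{B}_{d,\alpha}} \le \alpha/d$ under condition~\eqref{eq:Wald-sample-size-condition}, and then takes a union bound over the $d$ coordinates. Your additional remark that $(\alpha/d)^{1/\widebar{B}_{d,\alpha}} \ge 1/2$ (so the condition is never vacuous) is a correct and harmless observation not spelled out in the paper.
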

\begin{proof}
Following the proof of Lemma~\ref{lem:fixed-B-coverage-result}, we obtain
\[
\mathbb{P}\left(e_k^{\top}\theta_0 \notin \bigcup_{j=1}^{\widebar{B}_{d,\alpha}} \widehat{\mathrm{CI}}_{j}^{\texttt{Wald}, k}\right) ~\le~ \left(\frac{1}{2} + \delta_{n,d}\right)^{\widebar{B}_{d,\alpha}} \le \frac{\alpha}{d}.
\]
The last inequality here follows from~\eqref{eq:Wald-sample-size-condition}. Hence the union bound applies and proves the result.
\end{proof}
Similar to the right hand side of~\eqref{eq:consistency-subsampling}, the right hand side of~\eqref{eq:Wald-sample-size-condition} is be bounded away from zero unless $d/\alpha$ is a power of $2$. It is worth noting that, unlike~\eqref{eq:infinite-order-coverage}, the construction of the confidence region $\otimes_{k=1}^d\cup_{j=1}^{\widebar{B}_{d,\alpha}}\widehat{\mathrm{CI}}_{j}^{\texttt{Wald}, k}$ requires estimation of variance of the estimators and its validity requires (marginal) distributional convergence of the estimators $e_k^{\top}\widehat{\theta}_j$.
\section{Proof of Lemma~\ref{lem:fixed-B-coverage-result}}\label{appsec:fixed-B-coverage-result}
It is clear that
\begin{align*}
    \mathbb{P}\left(\theta_0 \notin \left[\min_{1\le j\le B}\widehat{\theta}_j,\,\max_{1\le j\le B}\widehat{\theta}_j\right]\right) &= \mathbb{P}\left(\theta_0 < \min_{1\le j\le B}\widehat{\theta}_j(k)\right) + \mathbb{P}\left(\max_{1\le j\le B}\widehat{\theta}_j < \theta_0\right)\\
    &= \prod_{j=1}^B \mathbb{P}(\widehat{\theta}_j > \theta_0) + \prod_{j=1}^B \mathbb{P}(\widehat{\theta}_j < \theta_0)\\
    &= \prod_{j=1}^B \left\{1 - \mathbb{P}(\widehat{\theta}_j \le \theta_0)\right\} + \prod_{j=1}^B \left\{1 - \mathbb{P}(\widehat{\theta}_j \ge \theta_0)\right\}.
\end{align*}
The result now follows from the definition~\eqref{eq:delta-definition-main-lemma} of $\Delta$. Note that the inequality in result stems from the fact that $\Delta$ in~\eqref{eq:delta-definition-main-lemma} is the maximum value over all estimators. Furthermore, we use the fact that $\mathbb{P}(\widehat{\theta}_j < \theta_0) + \mathbb{P}(\widehat{\theta}_j > \theta_0) \le 1$. If this inequality is strict, then the inequality in the coverage is strict.
\section{Proof of Theorem~\ref{thm:coverage-algorithm-known-Delta}}\label{appsec:coverage-algorithm-known-Delta}
The confidence interval from the \hulc is either based on $B_{\alpha,\Delta}$ estimators or based on $B_{\alpha,\Delta} - 1$ estimators. The number of estimators used depends on the realization of the uniform random variable $U$ in step 2 of the \hulcnospace. With probability $\tau_{\alpha,\Delta}$, the confidence interval will be based on $B_{\alpha,\Delta} - 1$ estimators and with probability $1 - \tau_{\alpha,\Delta}$, the confidence interval will be based on $B_{\alpha,\Delta}$ estimators. Hence from Lemma~\ref{lem:fixed-B-coverage-result}, we obtain
\begin{equation}\label{eq:miscoverage-randomized}
\begin{split}
\mathbb{P}\left(\theta_0\notin\widehat{\mathrm{CI}}_{\alpha,\Delta}\right) &= \tau_{\alpha,\Delta}\mathbb{P}\left(\theta_0 \notin \left[\min_{1\le j\le B_{\alpha,\Delta}-1}\widehat{\theta}_j, \max_{1\le j\le B_{\alpha,\Delta}-1}\widehat{\theta}_j\right]\right)\\
&\quad+ (1-\tau_{\alpha,\Delta})\mathbb{P}\left(\theta_0\notin\left[\min_{1\le j\le B_{\alpha,\Delta}}\widehat{\theta}_j, \max_{1\le j\le B_{\alpha,\Delta}}\widehat{\theta}_j\right]\right)\\
&\le \tau_{\alpha,\Delta} P(B_{\alpha,\Delta} - 1; \Delta) + (1-\tau_{\alpha,\Delta}) P(B_{\alpha,\Delta}; \Delta) ~=~ \alpha.
\end{split}
\end{equation}
This proves~\eqref{eq:over-coverage-in-general}. 
Under the additional assumptions for~\eqref{eq:exact-coverage-same-median-bias}, the only inequality in~\eqref{eq:miscoverage-randomized} becomes equality. Firstly, $\mathbb{P}(\widehat{\theta}_j = \theta_0) = 0$ for all $j\ge1$ implies that
\[
\mathbb{P}\left(\theta_0 \notin \left[\min_{1\le j\le B}\widehat{\theta}_j, \max_{1\le j\le B}\widehat{\theta}_j\right]\right) = \prod_{j=1}^B \mathbb{P}(\widehat{\theta}_j > \theta_0) + \prod_{j=1}^B (1 - \mathbb{P}(\widehat{\theta}_j > \theta_0)).
\]
Secondly, the assumption that all the estimators have a median bias of $\Delta$ exactly, implies that this probability is exactly $(1/2 - \Delta)^B + (1/2 + \Delta)^B$. This proves~\eqref{eq:exact-coverage-same-median-bias}.
\section{Proof of Proposition~\ref{prop:approximate-to-exact-validity}}\label{appsec:approximate-to-exact-validity}
If $\widetilde{\Delta} = \Delta$, then the conclusion is obvious. Consider the case, $\widetilde{\Delta} \neq \Delta$.
Note that $P(B; \Delta)$ is an increasing function of $\Delta$. Hence, for any $0 \le \Delta \le \widetilde{\Delta} < 1/2$, we have
\begin{equation}\label{eq:simple-ratio-inequality}
\begin{split}
1 \le \frac{P(B; \widetilde{\Delta})}{P(B; \Delta)} &= \frac{(1 + 2\widetilde{\Delta})^B + (1 - 2\widetilde{\Delta})^B}{(1 + 2\Delta)^B + (1 - 2\Delta)^B}\\
&\le \max\left\{\frac{(1 + 2\widetilde{\Delta})^B}{(1 + 2\Delta)^B},\,\frac{(1 - 2\widetilde{\Delta})^B}{(1 - 2\Delta)^B}\right\} = \left(\frac{1 + 2\widetilde{\Delta}}{1 + 2\Delta}\right)^B = \left(1 + \frac{2(\widetilde{\Delta} - \Delta)}{(1 + 2\Delta)}\right)^B\\ 
&\le \left(1 + 2(\widetilde{\Delta} - \Delta)\right)^B.
\end{split}
\end{equation}
Reversing the roles of $\widetilde{\Delta}$ and $\Delta$, we conclude that if $0 \le \widetilde{\Delta} \le \Delta < 1/2$, then
\[
\left(1 + 2(\Delta - \widetilde{\Delta})\right)^{-B} \le \frac{P(B; \widetilde{\Delta})}{P(B; \Delta)} \le 1.
\]
Hence, for all $\widetilde{\Delta}, \Delta\in[0,1/2)$, and all $B\ge1$, we have
\begin{equation}\label{eq:main-inequality-P-B-Delta}
P(B; \Delta)\left(1 + 2|\Delta - \widetilde{\Delta}|\right)^{-B} ~\le~ P(B; \widetilde{\Delta}) ~\le~ P(B; \Delta)\left(1 + 2|\Delta - \widetilde{\Delta}|\right)^B. 
\end{equation}
Because $P(B_{\alpha,\Delta};\Delta) < \alpha$, using~\eqref{eq:main-inequality-P-B-Delta} with $B = B_{\alpha,\Delta}$, we get $P(B_{\alpha,\Delta}; \widetilde{\Delta}) \le \alpha$ if
\begin{equation}\label{eq:upper-bound}
\left(1 + 2|\Delta - \widetilde{\Delta}|\right)^{B_{\alpha,\Delta}} \le \frac{\alpha}{P(B_{\alpha,\Delta}; \Delta)}.
\end{equation}
Similarly, $P(B_{\alpha,\Delta} - 1; \Delta) > \alpha$, using~\eqref{eq:main-inequality-P-B-Delta} with $B = B_{\alpha,\Delta} - 1$, we get $P(B_{\alpha,\Delta} - 1; \widetilde{\Delta}) > \alpha$ if
\begin{equation}\label{eq:lower-bound}
\left(1 + 2|\Delta - \widetilde{\Delta}|\right)^{B_{\alpha,\Delta} - 1} < \frac{P(B_{\alpha,\Delta} - 1; \Delta)}{\alpha}.
\end{equation}
Note that both the ratios on the right hand side of~\eqref{eq:upper-bound} and~\eqref{eq:lower-bound} are at least $1$. Furthermore, inequality~\eqref{eq:lower-bound} will be satisfied if $(1 + 2|\Delta - \widetilde{\Delta}|)^{B_{\alpha,\Delta}} \le {P(B_{\alpha,\Delta} - 1; \Delta)}/{\alpha}.$
Combining inequalities~\eqref{eq:upper-bound} and~\eqref{eq:lower-bound}, we get that if~\eqref{eq:consistency-subsampling} holds true, then $P(B_{\alpha,\Delta}; \widetilde{\Delta}) \le \alpha < P(B_{\alpha,\Delta}-1;\widetilde{\Delta})$ and hence $B_{\alpha,\Delta} = B_{\alpha,\widetilde{\Delta}}$.

Note that in inequality~\eqref{eq:simple-ratio-inequality}, we used an inequality for the ratio $P(B;\widetilde{\Delta})/P(B;\Delta)$. Observe that if $\Delta = 0$, then $P(B; \widetilde{\Delta})/P(B; \Delta)$ has zero derivative at $\widetilde{\Delta} = 0$. Observe that, with $\Delta = 0$, 
\begin{align*}
1\le \frac{P(B; \widetilde{\Delta})}{P(B; \Delta)} &= \frac{(1 + 2\widetilde{\Delta})^B + (1 - 2\widetilde{\Delta})^B}{2}\\ 
&= 1 + 0\widetilde{\Delta} + 2B(B-1)\widetilde{\Delta}^2\left[\frac{(1 + 2\widetilde{\Delta}^*)^{B - 2} + (1 - 2\widetilde{\Delta}^*)^{B - 2}}{2}\right],
\end{align*}
for some $\widetilde{\Delta}^*\in[0, \widetilde{\Delta}]$. For $B = 1$, the right most term is zero and for $B \ge 2,$ we have that
\[
\frac{(1 + 2\widetilde{\Delta}^*)^{B - 2} + (1 - 2\widetilde{\Delta}^*)^{B - 2}}{2} \le (1 + 2\widetilde{\Delta}^*)^{B - 2}.
\] 
Because $\widetilde{\Delta}^* \le \Delta$, we obtain for all $B \ge 1$, 
\begin{equation}\label{eq:Delta-zero-inequality}
1 ~\le~ \frac{P(B; \widetilde{\Delta})}{P(B; 0)} ~\le~ 1 + 2B(B-1)\widetilde{\Delta}^2(1 + 2\widetilde{\Delta})^{(B - 2)_+}.
\end{equation}
By definition $P(B_{\alpha,0} - 1; 0) > \alpha$. Taking $B = B_{\alpha,0} - 1$ in~\eqref{eq:Delta-zero-inequality}, we obtain
\begin{equation}\label{eq:lower-bound-zero-Delta}
P(B_{\alpha,0} - 1; \widetilde{\Delta}) \ge P(B_{\alpha,0} - 1; 0) > \alpha \quad\Rightarrow\quad P(B_{\alpha, 0} - 1; \widetilde{\Delta}) > \alpha.
\end{equation}
Again by definition $P(B_{\alpha,0}; 0) \le \alpha$. Taking $B = B_{\alpha,0}$ in~\eqref{eq:Delta-zero-inequality}, we obtain
\begin{equation}\label{eq:upper-bound-zero-Delta}
P(B_{\alpha,0}; \widetilde{\Delta}) \le \alpha,\quad\mbox{if}\quad 2B_{\alpha,0}(B_{\alpha,0} - 1)\widetilde{\Delta}^2(1 + 2\widetilde{\Delta})^{(B_{\alpha,0} - 2)_+} \le \frac{\alpha}{P(B_{\alpha,0};0)} - 1.
\end{equation}
Combining~\eqref{eq:lower-bound-zero-Delta} and~\eqref{eq:upper-bound-zero-Delta}, we obtain~\eqref{eq:consistency-subsampling-asymptotic-median-unbiased}.
\begin{figure}[!h]
\centering
\includegraphics[width=\textwidth, height=2.9in]{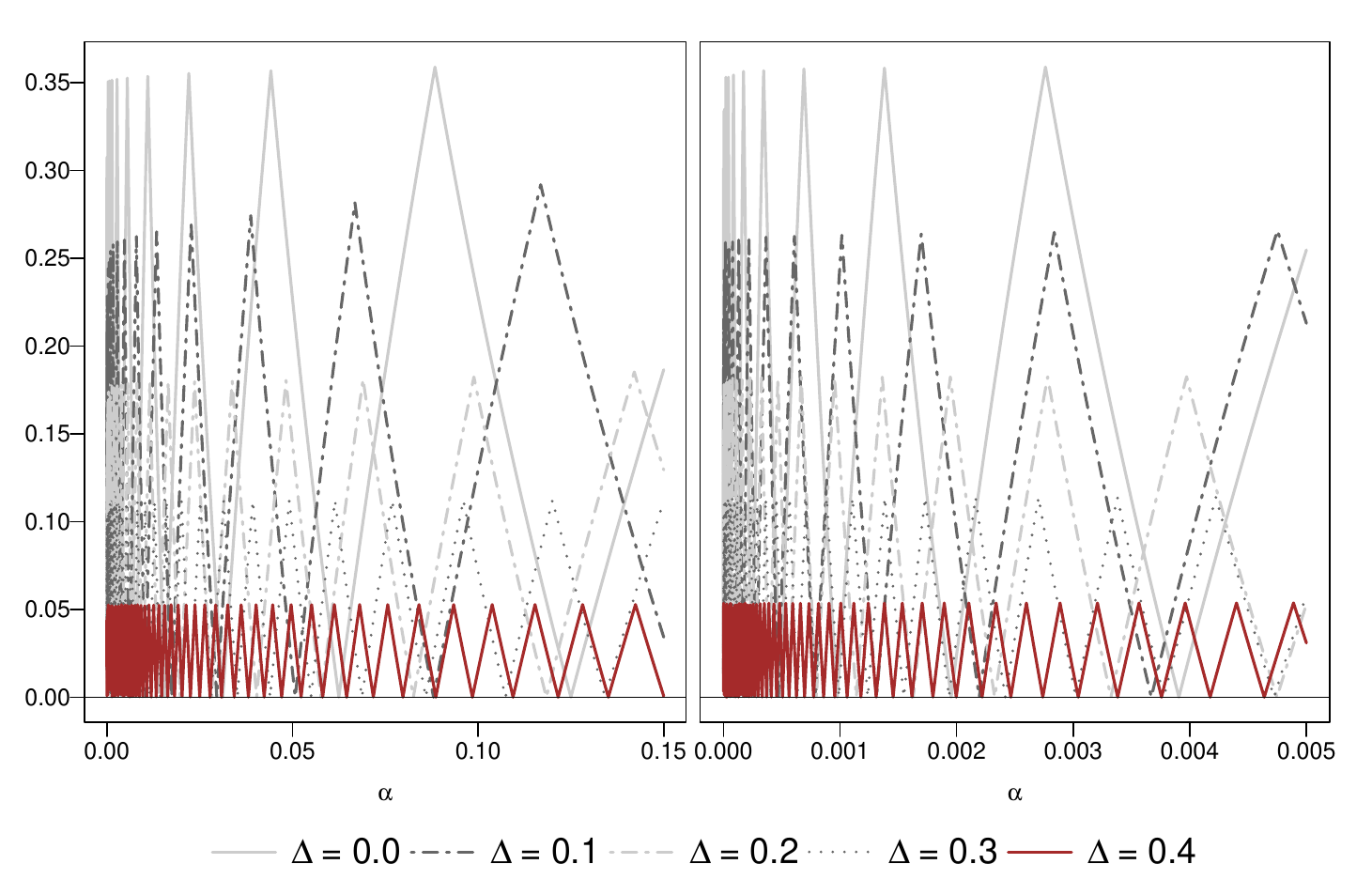}
\vspace{-0.2in}
\caption{The plots show the right hand side of~\eqref{eq:consistency-subsampling} on the $y$-axis as $\alpha$ changes from $0$ to $0.15$ and $\Delta\in\{0, 0.1, 0.2, 0.3, 0.4\}$. In the left panel, we show the plot for $\alpha\in(0, 0.15)$ and in the right panel, we show the plot for $\alpha\in(0, 0.005)$. The $y$-axis limits remain the same for both plots.}
\label{fig:right-hand-side-of-consistency-subsampling-scaled}
\end{figure}
\begin{figure}[!h]
\centering
\includegraphics[width=\textwidth, height=2.9in]{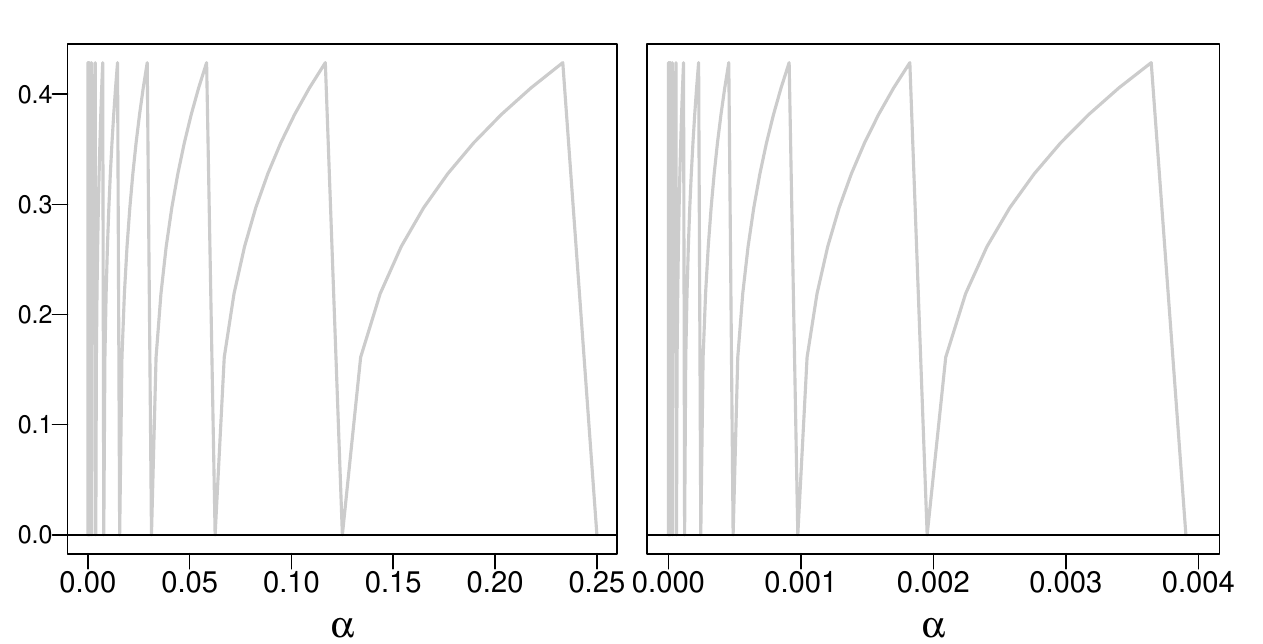}
\vspace{-0.2in}
\caption{The plots show the right hand side of~\eqref{eq:relaxed-requirement-median-unbiased} on the $y$-axis as $\alpha$ changes from $0$ to $0.15$. In the left panel, we show the plot for $\alpha\in(0, 0.15)$ and in the right panel, we show the plot for $\alpha\in(0, 0.004)$. The $y$-axis limits remain the same for both plots.}
\label{fig:right-hand-side-of-consistency-subsampling-median-unbiased}
\end{figure}

\section{Proof of Theorem~\ref{thm:infinite-order-coverage}}\label{appsec:infinite-order-coverage}
Because the median bias of estimators from $\mathcal{A}(\cdot)$ is $\widetilde{\Delta}$, we set
\[
\widetilde{\Delta} ~\ge~ \max_{1\le j\le B^*}\left(\frac{1}{2} - \max\left\{\mathbb{P}(\widehat{\theta}_j \ge \theta_0),\,\mathbb{P}(\widehat{\theta}_j \le \theta_0)\right\}\right)_+.
\]
Lemma~\ref{lem:fixed-B-coverage-result} implies that
\[
\mathbb{P}(\theta_0 \notin \widehat{\mathrm{CI}}_{\alpha,0}) \le \mathbb{E}\left[\left(\frac{1}{2} + \widetilde{\Delta}\right)^{B^*} + \left(\frac{1}{2} - \widetilde{\Delta}\right)^{B^*}\right].
\]
The right hand side involves an expectation because $B^*$ is a random variable satisfying
\[
\mathbb{P}(B^* = B_{\alpha,0}) = 1 - \tau_{\alpha,0},\quad\mbox{and}\quad \mathbb{P}(B^* = B_{\alpha,0} - 1) = \tau_{\alpha,0}.
\]
This follows from~\eqref{eq:definition-tau}. Therefore,
\begin{equation}\label{eq:penultimate-asymptotic-coverage}
\mathbb{P}(\theta_0 \notin \widehat{\mathrm{CI}}_{\alpha,0}) \le \tau_{\alpha,0}P(B_{\alpha,0} - 1; \widetilde{\Delta}) + (1 - \tau_{\alpha,0})P(B_{\alpha,0}; \widetilde{\Delta}).
\end{equation}
From the proof of Proposition~\ref{prop:approximate-to-exact-validity} (in particular~\eqref{eq:Delta-zero-inequality}), it follows that
\begin{equation}\label{eq:ratio-inequality-median-unbiased}
\max\left\{\frac{P(B_{\alpha,0}; \widetilde{\Delta})}{P(B_{\alpha,0}; 0)},\,\frac{P(B_{\alpha,0} - 1; \widetilde{\Delta})}{P(B_{\alpha,0}-1;0)}\right\} \le 1 + 2B_{\alpha,0}(B_{\alpha,0} - 1)\widetilde{\Delta}^2(1 + 2\widetilde{\Delta})^{(B_{\alpha,0} - 2)_+}.
\end{equation}
Substituting this inequality in~\eqref{eq:penultimate-asymptotic-coverage} yields
\begin{align*}
\mathbb{P}(\theta_0 \notin \widehat{\mathrm{CI}}_{\alpha,0}) &\le \tau_{\alpha,0}P(B_{\alpha,0} - 1; 0)\left(1 + 2B_{\alpha,0}(B_{\alpha,0} - 1)\widetilde{\Delta}^2(1 + 2\widetilde{\Delta})^{(B_{\alpha,0} - 2)_+}\right)\\ 
&\qquad+ (1-\tau_{\alpha,0})P(B_{\alpha,0}; 0)\left(1 + 2B_{\alpha,0}(B_{\alpha,0} - 1)\widetilde{\Delta}^2(1 + 2\widetilde{\Delta})^{(B_{\alpha,0} - 2)_+}\right)\\
&= \left[\tau_{\alpha,0}P(B_{\alpha,0}-1;0) + (1-\tau_{\alpha,0})P(B_{\alpha,0}; 0)\right]\left(1 + 2B_{\alpha,0}(B_{\alpha,0} - 1)\widetilde{\Delta}^2(1 + 2\widetilde{\Delta})^{(B_{\alpha,0} - 2)_+}\right)\\
&= \alpha\left(1 + 2B_{\alpha,0}(B_{\alpha,0} - 1)\widetilde{\Delta}^2(1 + 2\widetilde{\Delta})^{(B_{\alpha,0} - 2)_+}\right).
\end{align*}
The last equality follows from the definition~\eqref{eq:definition-tau} of $\tau_{\alpha,0}$. This completes the proof of upper bound in~\eqref{eq:some-order-coverage-upper-bound}. 

From the proof of Lemma~\ref{lem:fixed-B-coverage-result}, it follows, under the assumption of $\mathbb{P}(\widehat{\theta}_j = \theta_0) = 0$ and the exact median bias of $\widetilde{\Delta}$, that
\[
\mathbb{P}(\theta_0 \notin \widehat{\mathrm{CI}}_{\alpha,\Delta}) = \mathbb{E}\left[\left(\frac{1}{2} - \widetilde{\Delta}\right)^{B^*} + \left(\frac{1}{2} - \widetilde{\Delta}\right)^{B^*}\right] \ge \mathbb{E}\left[\frac{2}{2^{B^* - 1}}\right] = \alpha,
\]
the last equality follows again from the definition of $\tau_{\alpha,0}$ in~\eqref{eq:definition-tau}. This completes the proof of~\eqref{eq:some-order-coverage-lower-bound}.

For the case where the estimators have an asymptotic median bias of $\Delta$ ($\neq 0$), we use the \hulc to obtain $\widehat{\mathrm{CI}}_{\alpha,\Delta}$, while the true finite sample median bias is bounded by $\widetilde{\Delta}$. In this case, to prove the upper bound, we use the inequality
\[
\max\left\{\frac{P(B_{\alpha,\Delta}; \widetilde{\Delta})}{P(B_{\alpha,\Delta}; \Delta)},\,\frac{P(B_{\alpha,\Delta}-1; \widetilde{\Delta})}{P(B_{\alpha,\Delta} - 1; \Delta)}\right\} \le \left(1 + 2|\widetilde{\Delta} - \Delta|\right)^{B_{\alpha,\Delta}},
\]
in place of~\eqref{eq:ratio-inequality-median-unbiased}. Using this inequality in~\eqref{eq:penultimate-asymptotic-coverage} (with $\tau_{\alpha,0}$ replaced by $\tau_{\alpha,\Delta}$), we obtain
\begin{align*}
\mathbb{P}(\theta_0 \notin \widehat{\mathrm{CI}}_{\alpha,\Delta}) &\le \left[\tau_{\alpha,\Delta}P(B_{\alpha,\Delta} - 1; \Delta) + (1 - \tau_{\alpha,\Delta})P(B_{\alpha,\Delta}; \Delta)\right]\left(1 + 2|\widetilde{\Delta} - \Delta|\right)^{B_{\alpha,\Delta}}\\ 
&= \alpha\left(1 + 2|\widetilde{\Delta} - \Delta|\right)^{B_{\alpha,\Delta}}.
\end{align*}
From the proof of Lemma~\ref{lem:fixed-B-coverage-result}, it follows, under the assumption of $\mathbb{P}(\widehat{\theta}_j = \theta_0) = 0$ and the exact median bias of $\widetilde{\Delta}$, that
\begin{align*}
\mathbb{P}(\theta_0 \notin \widehat{\mathrm{CI}}_{\alpha,\Delta}) 
&= P(B_{\alpha,\Delta}; \widetilde{\Delta})(1 - \tau_{\alpha,\Delta}) + P(B_{\alpha,\Delta} - 1; \widetilde{\Delta})\tau_{\alpha,\Delta}\\
&\ge \left[P(B_{\alpha,\Delta}; \Delta)(1 - \tau_{\alpha,\Delta}) + P(B_{\alpha,\Delta} - 1; \Delta)\tau_{\alpha,\Delta}\right]\left(1 + 2|\widetilde{\Delta} - \Delta|\right)^{-B_{\alpha,\Delta}},\quad\mbox{using~\eqref{eq:main-inequality-P-B-Delta},}\\
&= \alpha\left(1 + 2|\widetilde{\Delta} - \Delta|\right)^{-B_{\alpha,\Delta}}.
\end{align*}
This proves the upper and lower bounds for a non-zero asymptotic median bias of $\Delta$.
\section{Proof of Lemma~\ref{lem:fixed-B-coverage-result-multivariate}}\label{appsec:fixed-B-coverage-result-multivariate}
Equality~\eqref{eq:half-space-symmetry-coverage} follows from Theorem 3 of~\cite{hayakawa2021estimating}. This result was originally proved in~\cite{wendel1962problem} under symmetry of $\widehat{\theta}_j - \theta_0$. \cite{wagner2001continuous} proved that the miscoverage probability is lower bounded by the quantity on the right hand side whenever $\widehat{\theta}_j - \theta_0$ has an absolutely continuous distribution; this does not require the assumption of $\mathbb{P}(c^{\top}(\widehat{\theta}_j - \theta_0) \le 0) = 1/2$ for all $c\in\mathbb{R}^d\setminus\{0\}$.

Equality~\eqref{eq:each-coordinate-median-bias-coverage} follows readily from Lemma~\ref{lem:fixed-B-coverage-result} and union bound. Formally,
\begin{align*}
\mathbb{P}\left(\theta_0 \notin \mbox{RectHull}(\{\widehat{\theta}_j: 1\le j\le B\})\right) &= \mathbb{P}\left(\bigcup_{k=1}^d \left\{e_k^{\top}\theta_0 \notin \left[\min_{1\le j\le B}e_k^{\top}\widehat{\theta}_j,\,\max_{1\le j\le B}e_k^{\top}\widehat{\theta}_j\right]\right\}\right)\\
&\le \sum_{k=1}^d \mathbb{P}\left(e_k^{\top}\theta_0 \notin \left[\min_{1\le j\le B}e_k^{\top}\widehat{\theta}_j,\,\max_{1\le j\le B}e_k^{\top}\widehat{\theta}_j\right]\right)\\
&\le \sum_{k=1}^d \left\{\left(\frac{1}{2} - \Delta_k\right)^{B} + \left(\frac{1}{2} + \Delta_k\right)^B\right\},
\end{align*}
where $\Delta_k$ is an upper bound on the median bias of $e_k^{\top}\widehat{\theta}_j$ for $1\le j\le B$. Hence inequality~\eqref{eq:each-coordinate-median-bias-coverage} follows.

\section{Proof of Proposition~\ref{prop:median-bias-mean-square}}\label{appsec:median-bias-mean-square}

For notational convenience and without loss of generality, we will prove the result when $\widehat{\theta}_j$ is computed based on $n$ observations. Set
\[
\widehat{T} = \frac{1}{\binom{n}{2}} \sum_{i < j} X_i X_j. 
\]
This can be rewritten as
\[
\widehat{T} = \frac{n}{(n-1)} \left( \frac{1}{n} \sum_{i=1}^n X_i \right)^2 - \frac{1}{(n-1)} \left( \frac{1}{n} \sum_{i=1}^n X_i^2  \right).
\]
In terms of $\xi_i$ and $Z = n^{-1/2}\sum_{i=1}^n \xi_i$, this becomes
\[
\widehat{T} - \mu^2 = \frac{2 \mu \sigma Z}{\sqrt{n}} + \frac{\sigma^2}{n-1} \left[ Z^2 - 1 \right] + \frac{\sigma^2}{n-1}\left[ 1 - \frac{1}{n} \sum_{i=1}^n \xi_i^2\right].
\]
It is clear that
\[
\frac{\widehat{T}}{\sigma^2} - \frac{\mu^2}{\sigma^2} ~=~ 2\frac{\mu}{\sigma}\frac{Z}{\sqrt{n}} + \frac{Z^2 - 1}{n-1} + \frac{1}{\sqrt{n}(n-1)}\left[\sqrt{n}\left(\frac{1}{n}\sum_{i=1}^n {\xi_i^2} - 1\right)\right].
\]
Set $W = \sqrt{n}(n^{-1}\sum_{i=1}^n \xi_i^2 - 1) = O_p(1)$. By the univariate Berry--Esseen bound, we get
\begin{equation}\label{eq:BE-bound-quadratic}
\left|\mathbb{P}\left(W \le t\right) - \mathbb{P}\left(N(0, \mathbb{E}[\xi^4]) \le t\right)\right| \le \frac{\mathbb{E}[\xi^6]}{(\mathbb{E}[\xi^4])^{3/2}\sqrt{n}}.
\end{equation}
Similarly,
\begin{equation}\label{eq:BE-bound-Z}
\left|\mathbb{P}(Z \le t) - \mathbb{P}(N(0, 1) \le t)\right| \le \frac{\mathbb{E}[|\xi|^3]}{(\mathbb{E}[\xi^2])^{3/2}\sqrt{n}}.
\end{equation}
Inequality~\eqref{eq:BE-bound-quadratic} implies that 
\[
\mathbb{P}\left(|W| > (3\mathbb{E}[\xi^4]\log n)^{1/2}\right) \le \frac{1}{n} + \frac{\mathbb{E}[\xi^6]}{(\mathbb{E}[\xi^4])^{3/2}\sqrt{n}}.
\]
Clearly, 
\begin{equation}\label{eq:less-median-bias}
\begin{split}
\mathbb{P}\left(\widehat{T} \le \mu^2\right) &= \mathbb{P}\left(2\frac{\mu}{\sigma} Z + \frac{\sqrt{n}}{n-1}(Z^2 - 1) + \frac{W}{n-1} \le 0\right)\\
&= \mathbb{P}\left(2\frac{\mu}{\sigma} Z + \frac{\sqrt{n}}{n-1}(Z^2 - 1) + \frac{W}{n-1} \le 0,\, |W| \le (3\mathbb{E}[\xi^4]\log n)^{1/2}\right)\\
&\quad+ \mathbb{P}\left(2\frac{\mu}{\sigma} Z + \frac{\sqrt{n}}{n-1}(Z^2 - 1) + \frac{W}{n-1} \le 0,\, |W| > (3\mathbb{E}[\xi^4]\log n)^{1/2}\right)\\
&\le \mathbb{P}\left(2\frac{\mu}{\sigma} Z + \frac{\sqrt{n}}{n-1}(Z^2 - 1) \le \frac{(2\mathbb{E}[\xi^4]\log n)^{1/2}}{n-1}\right)\\ 
&\quad+ \mathbb{P}(|W| > (3\mathbb{E}[\xi^4]\log n)^{1/2}).
\end{split}
\end{equation}
Similarly,
\begin{equation}\label{eq:more-median-bias}
\begin{split}
\mathbb{P}(\widehat{T} \le \mu^2) &\ge \mathbb{P}\left(2\frac{\mu}{\sigma} Z + \frac{\sqrt{n}}{n-1}(Z^2 - 1) \le -\frac{(3\mathbb{E}[\xi^4]\log n)^{1/2}}{n-1}\right)\\ 
&\quad- \mathbb{P}(|W| > (3\mathbb{E}[\xi^4]\log n)^{1/2}).
\end{split}
\end{equation}
Note that
\begin{equation}\label{eq:event-equivalence}
\begin{split}
\{a Z + b(Z^2 - 1) \le c\} &\equiv \left\{\frac{-a - \sqrt{a^2 + 4b(b+c)}}{2b} \le Z \le \frac{-a + \sqrt{a^2 + 4b(b+c)}}{2b}\right\}\\
&\equiv \left\{\left|Z + \frac{a}{2b}\right| \le \frac{\sqrt{a^2 + 4b(b + c)}}{2b}\right\}.
\end{split}
\end{equation}
Using inequality~\eqref{eq:BE-bound-Z}, we obtain
\begin{align*}
\mathbb{P}\left(a Z + b(Z^2 - 1) \le c\right) &= \mathbb{P}\left(\left|Z + \frac{a}{2b}\right| \le \frac{\sqrt{a^2 + 4b(b + c)}}{2b}\right)\\
&= \mathbb{P}\left(\left|N(0, 1) + \frac{a}{2b}\right| \le \frac{\sqrt{a^2 + 4b(b + c)}}{2b}\right) \pm \frac{\mathbb{E}[\xi^3]}{(\mathbb{E}[\xi^2])^{3/2}\sqrt{n}}\\
&= \Phi\left(\frac{-a + \sqrt{a^2 + 4b(b+c)}}{2b}\right) - \Phi\left(\frac{-a-\sqrt{a^2 + 4b(b+c)}}{2b}\right)\\ 
&\quad\pm \frac{\mathbb{E}[|\xi|^3]}{(\mathbb{E}[\xi^2])^{3/2}\sqrt{n}}.
\end{align*}
Finally, note that
\begin{align*}
\left|\Phi\left(\frac{-a \pm \sqrt{a^2 + 4b(b\pm c)}}{2b}\right) - \Phi\left(\frac{-a \pm \sqrt{a^2 + 4b^2}}{2b}\right)\right| &\le \frac{1}{\sqrt{2\pi}}\left|\sqrt{\frac{a^2 + 4b^2 \pm 4bc}{4b^2}} - \sqrt{\frac{a^2 + 4b^2}{4b^2}}\right|\\
&\le \frac{1}{\sqrt{2\pi}}\times\frac{|c|}{b}. 
\end{align*}
Combining the two inequalities above, we get
\begin{equation}\label{eq:final-inequality-quadratic}
\begin{split}
\mathbb{P}\left(a Z + b(Z^2 - 1) \le c\right) &= \Phi\left(\frac{-a + \sqrt{a^2 + 4b^2}}{2b}\right) - \Phi\left(\frac{-a-\sqrt{a^2 + 4b^2}}{2b}\right)\\
&\quad\pm \frac{2}{\sqrt{2\pi}}\frac{|c|}{b} \pm \frac{\mathbb{E}[|\xi|^3]}{(\mathbb{E}[\xi^2])^{3/2}\sqrt{n}}\\
&= \mathbb{P}(aZ + b(Z^2 - 1) \le 0) \pm \frac{2}{\sqrt{2\pi}}\frac{|c|}{b} \pm \frac{\mathbb{E}[|\xi|^3]}{(\mathbb{E}[\xi^2])^{3/2}\sqrt{n}}.
\end{split}
\end{equation}
Substituting these inequalities in~\eqref{eq:less-median-bias} and~\eqref{eq:more-median-bias}, we conclude
\begin{equation}\label{eq:final-bound-median-bias}
\begin{split}
&\left|\mathbb{P}(\widehat{T} \le \mu^2) - \left\{\Phi\left(\frac{-a + \sqrt{a^2 + 4b^2}}{2b}\right) - \Phi\left(\frac{-a-\sqrt{a^2 + 4b^2}}{2b}\right)\right\}\right|\\ 
&\quad\le \sqrt{\frac{2}{\pi}}\frac{|c|}{b} + \frac{\mathbb{E}[|\xi|^3]}{(\mathbb{E}[\xi^2])^{3/2}\sqrt{n}} + \frac{2}{n} + \frac{\mathbb{E}[\xi^6]}{(\mathbb{E}[\xi^4])^{3/2}\sqrt{n}}.
\end{split}
\end{equation}
Here 
\[
a = 2\frac{\mu}{\sigma},\quad b = \frac{\sqrt{n}}{n-1},\quad\mbox{and}\quad c = \pm\frac{\sqrt{3\mathbb{E}[\xi^4]\log n}}{n-1}.
\]
This implies
\[
\frac{|c|}{b} = \sqrt{\frac{3\mathbb{E}[\xi^4]\log n}{n}}.
\]
Note that the right hand side of~\eqref{eq:final-bound-median-bias} is of order $n^{-1/2}$ and does not depend on $\mu$; it only depends on $\mathbb{E}[|\xi|^j], j = 3, 4, 6$. Inequality~\eqref{eq:final-bound-median-bias} implies that the median bias of $\widehat{T}$ can be obtained by taking the maximum over all $\theta\in\mathbb{R}$ of
\begin{align*}
&\left|\frac{1}{2} - \left\{\Phi\left(\frac{-2\theta + \sqrt{4\theta^2 + 4n/(n-1)^2}}{2\sqrt{n}/(n-1)}\right) - \Phi\left(\frac{-2\theta-\sqrt{4\theta^2 + 4n/(n-1)^2}}{2\sqrt{n}/(n-1)}\right)\right\}\right|\\
&\quad= \left|\frac{1}{2} - \left\{\Phi\left(\frac{-\theta + \sqrt{\theta^2 + n/(n-1)^2}}{\sqrt{n}/(n-1)}\right) - \Phi\left(\frac{-\theta-\sqrt{\theta^2 + n/(n-1)^2}}{\sqrt{n}/(n-1)}\right)\right\}\right|.
\end{align*}
It seems the maximum is attained at $\theta = 0$ for any $n$. 
\section{Proof of Theorem~\ref{thm:subsampling-hull-validity}}\label{appsec:subsampling-hull-validity}
Throughout the proof, we write $\widehat{\Delta}$ instead of $\Delta$ for convenience. 
Define the event
\[
\mathcal{E} := \{B_{\alpha,\widehat{\Delta}_n} = B_{\alpha,\Delta}\}.
\]
and set
\begin{equation}\label{eq:CIs-under-with-true-Delta}
\widetilde{\mathrm{CI}}_{\alpha}^{(0)} := \left[\min_{1\le j\le B_{\alpha,\Delta} - 1}\widehat{\theta}_j, \min_{1\le j\le B_{\alpha,\Delta} - 1}\widehat{\theta}_j\right],\quad\mbox{and}\quad \widetilde{\mathrm{CI}}_{\alpha}^{(1)} := \left[\min_{1\le j\le B_{\alpha,\Delta}}\widehat{\theta}_j, \min_{1\le j\le B_{\alpha,\Delta}}\widehat{\theta}_j\right].
\end{equation}
Recall $\tau_{\alpha,\Delta}$ from~\eqref{eq:definition-tau}. On the event $\mathcal{E}$, we get that 
\[
\tau_{\alpha,\widehat{\Delta}} = \frac{\alpha - P(B_{\alpha,\Delta}; \widehat{\Delta})}{P(B_{\alpha,\Delta} - 1; \widehat{\Delta}) - P(B_{\alpha,\Delta}; \widehat{\Delta})},\quad\mbox{and}\quad \widehat{\mathrm{CI}}_{\alpha}^{(0)} = \widetilde{\mathrm{CI}}_{\alpha}^{(0)},\quad \widehat{\mathrm{CI}}_{\alpha}^{(1)} = \widetilde{\mathrm{CI}}_{\alpha}^{(1)}.
\]
We first bound the miscoverage probabilities of $\widehat{\mathrm{CI}}_{\alpha}^{(0)}$ and $\widehat{\mathrm{CI}}_{\alpha}^{(1)}$ when event $\mathcal{E}$ occurs. From the definition of $\Delta_{n,\alpha}$ and Lemma~\ref{lem:fixed-B-coverage-result},
\begin{equation}\label{eq:miscoverage-B-minus-1-subsampling}
\begin{split}
\mathbb{P}(\{\theta_0 \notin \widehat{\mathrm{CI}}_{\alpha}^{(0)}\}\cap\mathcal{E}) &\le \mathbb{P}(\theta_0 \notin \widetilde{\mathrm{CI}}_{\alpha}^{(0)})\\
&\le P(B_{\alpha,\Delta} - 1; \Delta_{n,\alpha})\\
&= \frac{P(B_{\alpha,\Delta} - 1; \Delta_{n,\alpha})}{P(B_{\alpha,\Delta} - 1; \Delta)}P(B_{\alpha,\Delta} - 1; \Delta)\\
&\le P(B_{\alpha,\Delta} - 1; \Delta)\times\begin{cases}(1 + 2(B_{\alpha,0} - 1)(B_{\alpha,0} - 2)\Delta_{n,\alpha}^2(1 + 2\Delta_{n,\alpha})^{B_{\alpha,0}}), &\mbox{if }\Delta = 0,\\
(1 + 2|\Delta_{n,\alpha} - \Delta|)^{B_{\alpha,\Delta} - 1}, &\mbox{if }\Delta \neq 0\end{cases}\\ 
&\le {2\alpha}\times\begin{cases}(1 + 2(B_{\alpha,0} - 1)(B_{\alpha,0} - 2)\Delta_{n,\alpha}^2(1 + 2\Delta_{n,\alpha})^{B_{\alpha,0}}), &\mbox{if }\Delta = 0,\\
(1 + 2|\Delta_{n,\alpha} - \Delta|)^{B_{\alpha,\Delta} - 1}, &\mbox{if }\Delta \neq 0\end{cases}.
\end{split}
\end{equation}
The first inequality follows from proof of Proposition~\ref{prop:approximate-to-exact-validity} (in particular~\eqref{eq:main-inequality-P-B-Delta}) while the second inequality follows from the fact that $P(B_{\alpha,\Delta}; \Delta) \le \alpha$ and $P(B_{\alpha,\Delta} - 1; \Delta)/P(B_{\alpha,\Delta}; \Delta) \le 2$ for any $\Delta\in[0, 1/2]$. Similarly,
\begin{equation}\label{eq:miscoverage-B-subsampling}
\begin{split}
&\mathbb{P}(\{\theta_0 \notin \widehat{\mathrm{CI}}_{\alpha}^{(1)}\}\cap\mathcal{E})\\
&\le P(B_{\alpha,\Delta}; \Delta_{n,\alpha})\\ 
&= P(B_{\alpha,\Delta}; \Delta)\frac{P(B_{\alpha,\Delta}; \Delta_{n,\alpha})}{P(B_{\alpha,\Delta}; \Delta)} \le \alpha\times\begin{cases}(1 + 2B_{\alpha,0}(B_{\alpha,0} - 1)\Delta_{n,\alpha}^2(1 + 2\Delta_{n,\alpha})^{B_{\alpha,0}}), &\mbox{if }\Delta = 0,\\
(1 + 2|\Delta_{n,\alpha} - \Delta|)^{B_{\alpha,\Delta}}, &\mbox{if }\Delta \neq 0\end{cases}.
\end{split}
\end{equation}
This completes the proof of~\eqref{eq:fixed-choice-B-unknown-Delta}. 
We will now prove a bound on the miscoverage probability of $\widehat{\mathrm{CI}}_{\alpha}^{\mathrm{sub}}$.
Writing $\mathbb{E}_{U}[\cdot]$ to represent the expectation with respect to $U$ which is a uniform random variable independent of the data, we get
\begin{equation}\label{eq:miscoverage-subsampling}
\mathbb{E}_U[\mathbbm{1}\{\theta_0 \notin \widehat{\mathrm{CI}}_{\alpha}^{\mathrm{sub}}\}]\mathbbm{1}\{\mathcal{E}\} ~=~ \tau_{\alpha,\widehat{\Delta}}\mathbbm{1}\{\theta_0 \notin \widehat{\mathrm{CI}}_{\alpha}^{(0)}\}\mathbbm{1}\{\mathcal{E}\} + (1 - \tau_{\alpha,\widehat{\Delta}})\mathbbm{1}\{\theta_0 \notin \widehat{\mathrm{CI}}_{\alpha}^{(1)}\}\mathbbm{1}\{\mathcal{E}\}.
\end{equation}
Note that the miscoverage probability of $\widehat{\mathrm{CI}}_{\alpha}^{\mathrm{sub}}$ is $\mathbb{E}[\mathbb{E}_{U}[\mathbbm{1}\{\theta_0 \notin \widehat{\mathrm{CI}}_{\alpha}^{\mathrm{sub}}\}]]$. Using inequalities~\eqref{eq:miscoverage-B-minus-1-subsampling} and~\eqref{eq:miscoverage-B-subsampling}, we can readily obtain
\[
\mathbb{P}(\theta_0 \notin \widehat{\mathrm{CI}}_{\alpha}^{\mathrm{sub}}) \le \mathbb{P}(\mathcal{E}^c) + 3\alpha\times\begin{cases}(1 + 2B_{\alpha,0}(B_{\alpha,0} - 1)\Delta_{n,\alpha}^2(1 + 2\Delta_{n,\alpha})^{B_{\alpha,0}}), &\mbox{if }\Delta = 0,\\
(1 + 2|\Delta_{n,\alpha} - \Delta|)^{B_{\alpha,\Delta}}, &\mbox{if }\Delta \neq 0\end{cases}.
\]
If $\Delta = 0$, this only implies approximately $3\alpha$ miscoverage probability. If $\widehat{\Delta} - \Delta$ converges to zero in probability, then $\tau_{\alpha,\widehat{\Delta}} - \tau_{\alpha,\Delta}$ converges to zero and we obtain asymptotically $\alpha$ miscoverage probability for $\widehat{\mathrm{CI}}_{\alpha}^{\mathrm{sub}}$. This is the aim in proving~\eqref{eq:random-choice-B-unknown-Delta}. For this, we cannot readily use inequality~\eqref{eq:fixed-choice-B-unknown-Delta} because $\tau_{\alpha,\widehat{\Delta}}$ depend on the same data as $\widehat{\mathrm{CI}}_{\alpha}^{(0)},$ and $\widehat{\mathrm{CI}}_{\alpha}^{(1)}$. In order to overcome this, we show $\tau_{\alpha,\widehat{\Delta}}$ is close to $\tau_{\alpha,\Delta}$ in a relative error sense. For this, it is not sufficient to know $B_{\alpha,\widehat{\Delta}}=B_{\alpha,\Delta}$. We need $|\widehat{\Delta} - \Delta|$ to be small. Consider the event
\[
\mathcal{E}_0 ~:=~ \{|\widehat{\Delta} - \Delta| \le \eta\},
\]
for some $\eta \in [0, C_{\alpha,\Delta}]$. Recall from Proposition~\ref{prop:approximate-to-exact-validity} implies that if $|\widehat{\Delta} - \Delta| \le C_{\alpha,\Delta}$, then $B_{\alpha,\widehat{\Delta}} = B_{\alpha,\Delta}$. Hence, if $\eta \le C_{\alpha,\Delta}$, then $\mathcal{E}_0\subseteq\mathcal{E}$ and $\mathbb{P}(\mathcal{E}^c) \le \mathbb{P}(\mathcal{E}_0^c)$. To control the difference between $\tau_{\alpha,\widehat{\Delta}}$ and $\tau_{\alpha,\Delta}$, we write $\tau_{\alpha,\widehat{\Delta}}$ as
\[
\tau_{\alpha,\widehat{\Delta}} = \frac{1 - \widehat{a}}{\widehat{b} - \widehat{a}},
\]
where $\widehat{a} = P(B_{\alpha,\Delta}; \widehat{\Delta})/\alpha$ and $\widehat{b} = P(B_{\alpha,\Delta} - 1; \widehat{\Delta})/\alpha$. Similarly, we can write $\tau_{\alpha,\Delta} = (1 - a)/(b - a)$ for $a, b$ defined similar to $\widehat{a}, \widehat{b}$ with $\Delta$ replacing $\widehat{\Delta}$. From inequalities~\eqref{eq:main-inequality-P-B-Delta} and~\eqref{eq:Delta-zero-inequality}, we get that
\begin{align*}
(1 + 2|\widehat{\Delta} - \Delta|)^{-B_{\alpha,\Delta}} &\le \frac{\widehat{a}}{a} = \frac{P(B_{\alpha,\Delta}; \widehat{\Delta})}{P(B_{\alpha,\Delta}; \Delta)} \le (1 + 2|\widehat{\Delta} - \Delta|)^{B_{\alpha,\Delta}},\quad\mbox{if }\Delta\neq 0,\\
1 &\le \frac{\widehat{a}}{a} = \frac{P(B_{\alpha,\Delta}; \widehat{\Delta})}{P(B_{\alpha,\Delta}; \Delta)} \le (1 + 2B_{\alpha,0}(B_{\alpha,0} - 1)\widehat{\Delta}^2(1 + 2\widehat{\Delta})^{B_{\alpha,0}}),\quad\mbox{if }\Delta = 0.
\end{align*}
The same inequalities also hold true for $\widehat{b}/b$. For notational convenience, let us write $\ell \le \widehat{a}/a \le u$ and the same for $\widehat{b}/b$. It is easy to verify that $\tau_{\alpha,\widehat{\Delta}}$ is a decreasing function of $\widehat{a}$ and $\widehat{b}$ and hence
\[
\frac{1/u - a}{(b-a)} \le \tau_{\alpha,\widehat{\Delta}} \le \frac{1/\ell - a}{(b-a)}\quad\Rightarrow\quad \frac{1 - 1/u}{b - a} \le \tau_{\alpha,\widehat{\Delta}} - \tau_{\alpha,\Delta} \le \frac{1/\ell - 1}{b - a}.
\]
Recall that $b = P(B_{\alpha,\Delta} - 1; \Delta)/\alpha$ and $a = P(B_{\alpha,\Delta}; \Delta)/\alpha$. It follows that
\[
\left(1/2 - \Delta\right) \le \frac{P(B_{\alpha,\Delta} - 1; \Delta)}{\alpha}\left(\frac{1}{2} - \Delta\right) ~\le~ b - a ~\le~ \frac{P(B_{\alpha,\Delta}; \Delta)}{\alpha} \le 1.
\]
This yields
\[
(1 - 1/u) \le \tau_{\alpha,\widehat{\Delta}} - \tau_{\alpha,\Delta} \le \frac{(1/\ell - 1)}{(1/2 - \Delta)}.
\]
Hence, on event $\mathcal{E}_0$,
\begin{equation}\label{eq:tau-difference-inequality}
|\tau_{\alpha,\widehat{\Delta}} - \tau_{\alpha,\Delta}| ~\le~ D_{\tau} ~:=~ \begin{cases}2B_{\alpha,0}(B_{\alpha,0} - 1)\eta^2(1 + 2\eta)^{B_{\alpha,0}}, &\mbox{if }\Delta = 0,\\
(1/2 - \Delta)^{-1}|(1 + 2\eta)^{B_{\alpha,\Delta}} - 1|, &\mbox{if }\Delta \neq 0.
\end{cases}
\end{equation}
Note that $D_{\tau}$ is non-random and only depends on $\eta$ in the event $\mathcal{E}_0$. Inequality~\eqref{eq:tau-difference-inequality} can be alternatively written as $|\tau_{\alpha,\widehat{\Delta}} - \tau_{\alpha,\Delta}|\mathbbm{1}\{\mathcal{E}_0\} \le D_{\tau}$.

From~\eqref{eq:miscoverage-subsampling} and the fact that $\widehat{\mathrm{CI}}_{\alpha}^{(j)} = \widetilde{\mathrm{CI}}_{\alpha}^{(j)}$ for $j = 0, 1$ on the event $\mathcal{E}_0$ (when $\eta \le C_{\alpha,\Delta}$), we get
\begin{equation}\label{eq:decomposition-subsampling-miscoverage-exact}
\begin{split}
\mathbb{E}_U[\mathbbm{1}\{\theta_0 \notin \widehat{\mathrm{CI}}_{\alpha}^{\mathrm{sub}}\}]\mathbbm{1}\{\mathcal{E}_0\} &= \tau_{\alpha,\widehat{\Delta}}\mathbbm{1}\{\theta_0 \notin \widetilde{\mathrm{CI}}_{\alpha}^{(0)}\}\mathbbm{1}\{\mathcal{E}_0\} + (1 - \tau_{\alpha,\widehat{\Delta}})\mathbbm{1}\{\theta_0 \notin \widetilde{\mathrm{CI}}_{\alpha}^{(1)}\}\mathbbm{1}\{\mathcal{E}_0\}\\
&= \tau_{\alpha,\Delta}\mathbbm{1}\{\theta_0 \notin \widetilde{\mathrm{CI}}_{\alpha}^{(0)}\}\mathbbm{1}\{\mathcal{E}_0\} + (1 - \tau_{\alpha,\Delta})\mathbbm{1}\{\theta_0 \notin \widetilde{\mathrm{CI}}_{\alpha}^{(1)}\}\mathbbm{1}\{\mathcal{E}_0\}\\
&\quad+ (\tau_{\alpha,\widehat{\Delta}} - \tau_{\alpha,\Delta})\mathbbm{1}\{\theta_0 \notin \widetilde{\mathrm{CI}}_{\alpha}^{(0)}\}\mathbbm{1}\{\mathcal{E}_0\} - (\tau_{\alpha,\widehat{\Delta}} - \tau_{\alpha,\Delta})\mathbbm{1}\{\theta_0 \notin \widetilde{\mathrm{CI}}_{\alpha}^{(1)}\}\mathbbm{1}\{\mathcal{E}_0\}\\
&= \tau_{\alpha,\Delta}\mathbbm{1}\{\theta_0 \notin \widetilde{\mathrm{CI}}_{\alpha}^{(0)}\} + (1 - \tau_{\alpha,\Delta})\mathbbm{1}\{\theta_0 \notin \widetilde{\mathrm{CI}}_{\alpha}^{(1)}\}\\
&\quad- \tau_{\alpha,\Delta}\mathbbm{1}\{\theta_0 \notin \widetilde{\mathrm{CI}}_{\alpha}^{(0)}\}\mathbbm{1}\{\mathcal{E}_0^c\} - (1 - \tau_{\alpha,\Delta})\mathbbm{1}\{\theta_0 \notin \widetilde{\mathrm{CI}}_{\alpha}^{(1)}\}\mathbbm{1}\{\mathcal{E}_0^c\}\\
&\quad+ (\tau_{\alpha,\widehat{\Delta}} - \tau_{\alpha,\Delta})\mathbbm{1}\{\theta_0 \notin \widetilde{\mathrm{CI}}_{\alpha}^{(0)}\}\mathbbm{1}\{\mathcal{E}_0\} - (\tau_{\alpha,\widehat{\Delta}} - \tau_{\alpha,\Delta})\mathbbm{1}\{\theta_0 \notin \widetilde{\mathrm{CI}}_{\alpha}^{(1)}\}\mathbbm{1}\{\mathcal{E}_0\}.
\end{split}
\end{equation}
Because 
\[
\mathbb{P}(\theta_0 \notin \widehat{\mathrm{CI}}_{\alpha,\Delta}) = \mathbb{E}[\tau_{\alpha,\Delta}\mathbbm{1}\{\theta_0 \notin \widetilde{\mathrm{CI}}_{\alpha}^{(0)}\} + (1 - \tau_{\alpha,\Delta})\mathbbm{1}\{\theta_0 \notin \widetilde{\mathrm{CI}}_{\alpha}^{(1)}\}],
\]
it follows from~\eqref{eq:decomposition-subsampling-miscoverage-exact} and~\eqref{eq:tau-difference-inequality} that
\begin{equation}\label{eq:final-inequality-subsampling-exact-interval}
\left|\mathbb{P}(\theta_0 \notin \widehat{\mathrm{CI}}_{\alpha}^{\mathrm{sub}}) - \mathbb{P}(\theta_0 \notin \widehat{\mathrm{CI}}_{\alpha,\Delta})\right| \le 2\mathbb{P}(\mathcal{E}_0^c) + D_{\tau}\mathbb{E}[\mathbbm{1}\{\theta_0\notin\widetilde{\mathrm{CI}}_{\alpha}^{(0)}\} + \mathbbm{1}\{\theta_0\notin\widetilde{\mathrm{CI}}_{\alpha}^{(1)}\}].
\end{equation} 
The second reminder term in~\eqref{eq:final-inequality-subsampling-exact-interval} is controlled using the definition of $D_{\tau}$ in~\eqref{eq:tau-difference-inequality} and the bounds~\eqref{eq:miscoverage-B-minus-1-subsampling},~\eqref{eq:miscoverage-B-subsampling} for $\mathbb{P}(\theta_0\notin\widetilde{\mathrm{CI}}_{\alpha}^{(0)})$ and $\mathbb{P}(\theta_0 \notin\widetilde{\mathrm{CI}}_{\alpha}^{(1)})$. Finally, we use the fact that $2\eta B_{\alpha,\Delta} \le 2C_{\alpha,\Delta}B_{\alpha,\Delta} \le 1/2$ (from $\eta \le C_{\alpha,\Delta}$ and Figure~\ref{fig:right-hand-side-of-consistency-subsampling-scaled}) to bound $D_{\tau}$ for $\Delta \neq 0$ as 
\[
(1 + 2\eta)^{B_{\alpha,\Delta}} - 1 \le e^{2\eta B_{\alpha,\Delta}} - 1 \le 2\eta B_{\alpha,\Delta}e^{2\eta B_{\alpha,\Delta}} \le 2\sqrt{e}\eta B_{\alpha,\Delta}.
\]
This completes the proof of~\eqref{eq:random-choice-B-unknown-Delta}. 
\section{Proof of Lemma~\ref{lem:subsampling-delta-control}}\label{appsec:subsampling-delta-control}
Set $J(x) = \mathbb{P}(W \le x)$ and $J_b(x) = \mathbb{P}(r_b(\widehat{\theta}_b - \theta_0) \le x)$.
Note that by the triangle inequality, $|\widehat{\Delta}_n - \Delta| \leq |L_n(0) - J(0)|$, so we obtain that,
\begin{align*}
|\widehat{\Delta}_n - \Delta| &\leq \left|\frac{1}{K_n}\sum_{j=1}^{K_n} \mathbbm{1}\{r_b(\widehat{\theta}_b^{(j)} - \widehat{\theta}_n) \le 0\} - J(0)\right|\\
&\le \left|\frac{1}{K_n}\sum_{j=1}^{K_n} \mathbbm{1}\{r_b(\widehat{\theta}_b^{(j)} - \widehat{\theta}_n) \le 0\} - \frac{1}{\binom{n}{b}}\sum_{s=1}^{\binom{n}{b}} \mathbbm{1}\{r_b(\widehat{\theta}_b^{(j)} - \widehat{\theta}_n) \le 0\}\right|
\\
&\quad
+ \left|\frac{1}{\binom{n}{b}}\sum_{j=1}^{\binom{n}{b}} \mathbbm{1}\{r_b(\widehat{\theta}_b^{(j)} - \widehat{\theta}_n) \le 0\} - J(0)\right|\\
&\le \left|\frac{1}{K_n}\sum_{j=1}^{K_n} \mathbbm{1}\{r_b(\widehat{\theta}_b^{(j)} - \widehat{\theta}_n) \le 0\} - \frac{1}{\binom{n}{b}}\sum_{j=1}^{\binom{n}{b}} \mathbbm{1}\{r_b(\widehat{\theta}_b^{(j)} - \widehat{\theta}_n) \le 0\}\right|\\
&\quad+\left|\frac{1}{\binom{n}{b}}\sum_{j=1}^{\binom{n}{b}} \mathbbm{1}\{r_b(\widehat{\theta}_b^{(j)} - \theta_0) \le r_b(\widehat{\theta} - \theta_0)\} - J(0)\right|.
\end{align*}
Fix $t > 0$ such that $r_bt/r_n \le r^*$. Define the event 
\[
\mathcal{E} := \{r_b(\widehat{\theta}_n - \theta_0) \le r_bt/r_n\}.
\]
On the event $\mathcal{E}$, we have
\begin{equation}
\begin{split}
&\left|\frac{1}{\binom{n}{b}}\sum_{j=1}^{\binom{n}{b}} \mathbbm{1}\{r_b(\widehat{\theta}_b^{(j)} - \theta_0) \le r_b(\widehat{\theta} - \theta_0)\} - J(0)\right|\\
&\quad\le \left|\frac{1}{\binom{n}{b}}\sum_{j=1}^{\binom{n}{b}} \mathbbm{1}\{r_b(\widehat{\theta}_b^{(j)} - \theta_0) \le r_bt/r_n\} - J(0)\right|\\
&\qquad+ \left|\frac{1}{\binom{n}{b}}\sum_{j=1}^{\binom{n}{b}} \mathbbm{1}\{r_b(\widehat{\theta}_b^{(j)} - \theta_0) \le -r_bt/r_n\} - J(0)\right|\\
&\quad\le \left|\frac{1}{\binom{n}{b}}\sum_{j=1}^{\binom{n}{b}} \mathbbm{1}\{r_b(\widehat{\theta}_b^{(j)} - \theta_0) \le r_bt/r_n\} - J_b(r_bt/r_n)\right| + \left|J_b(r_bt/r_n) - J(r_bt/r_n)\right| + \left|J(r_bt/r_n) - J(0)\right|\\
&\qquad+ \left|\frac{1}{\binom{n}{b}}\sum_{j=1}^{\binom{n}{b}} \mathbbm{1}\{r_b(\widehat{\theta}_b^{(j)} - \theta_0) \le -r_bt/r_n\} - J_b(-r_bt/r_n)\right| + \left|J_b(-r_bt/r_n) - J(-r_bt/r_n)\right| + \left|J(-r_bt/r_n) - J(0)\right|.
\end{split}
\end{equation}
Because $r_bt/r_n \le r^*$, assumption~\ref{eq:continuity-distribution} implies that
\[
\max\{|J(r_bt/r_n) - J(0)|, |J(-r_bt/r_n) - J(0)|\} \le \mathfrak{C}r_bt/r_n.
\]
From assumption~\ref{eq:limiting-distribution}, we conclude
\[
\max\{|J_b(r_bt/r_n) - J(r_bt/r_n)|, |J_b(-r_bt/r_n) - J(-r_bt/r_n)|\} \le \delta_b.
\]
Therefore, on the event $\mathcal{E}$,
\begin{equation}
\begin{split}
|\widehat{\Delta}_n - \Delta| &\le \left|\frac{1}{K_n}\sum_{j=1}^{K_n} \mathbbm{1}\{r_b(\widehat{\theta}_b^{(j)} - \widehat{\theta}_n) \le 0\} - \frac{1}{\binom{n}{b}}\sum_{j=1}^{\binom{n}{b}} \mathbbm{1}\{r_b(\widehat{\theta}_b^{(j)} - \widehat{\theta}_n) \le 0\}\right|\\
&\quad+ \left|\frac{1}{\binom{n}{b}}\sum_{j=1}^{\binom{n}{b}} \mathbbm{1}\{r_b(\widehat{\theta}_b^{(j)} - \theta_0) \le r_bt/r_n\} - J_b(r_bt/r_n)\right|\\
&\quad+ \left|\frac{1}{\binom{n}{b}}\sum_{j=1}^{\binom{n}{b}} \mathbbm{1}\{r_b(\widehat{\theta}_b^{(j)} - \theta_0) \le -r_bt/r_n\} - J_b(-r_bt/r_n)\right|\\
&\quad+ 2\delta_b + 2\mathfrak{C}r_bt/r_n.
\end{split}
\end{equation}
Observe that $\widehat{\theta}_b^{(j)}, 1\le j\le K_n$ are independent and identically distributed random variables conditional on the data drawn from the finite population $\widehat{\theta}_b^{(j)}, 1\le j\le \binom{n}{b}$. Corollary 1 of~\cite{massart1990tight} implies that
\[
\mathbb{P}\left(\left|\frac{1}{K_n}\sum_{j=1}^{K_n} \mathbbm{1}\{r_b(\widehat{\theta}_b^{(j)} - \widehat{\theta}_n) \le 0\} - \frac{1}{\binom{n}{b}}\sum_{j=1}^{\binom{n}{b}} \mathbbm{1}\{r_b(\widehat{\theta}_b^{(j)} - \widehat{\theta}_n) \le 0\}\right| \ge \sqrt{\frac{\log(2n)}{2K_n}}\right) \le \frac{1}{n}.
\]
Furthermore, note that $\binom{n}{b}\sum_{j=1}^{\binom{n}{b}}\mathbbm{1}\{r_b(\widehat{\theta}_b^{(j)} - \theta_0) \le -r_bt/r_n\}$ is a non-degenerate $U$-statistics of order $b$ and with a kernel bounded between $0$ and $1$. Hence, Hoeffding's inequality for $U$-statistics \citep[inequality (5.7)]{hoeffding1963probability} implies that
\[
\mathbb{P}\left(\left|\frac{1}{\binom{n}{b}}\sum_{j=1}^{\binom{n}{b}} \mathbbm{1}\{r_b(\widehat{\theta}_b^{(j)} - \theta_0) \le r_bt/r_n\} - J_b(r_bt/r_n)\right| \ge \sqrt{\frac{\log(2n/b)}{2[n/b]}}\right) \le \frac{b}{n}.
\] 
Hence, with probability at least $1 - \mathbb{P}(\mathcal{E}^c) - (b + 1)/n$,
\[
|\widehat{\Delta}_n - \Delta| \le \sqrt{\frac{\log(2n)}{2K_n}} + \sqrt{\frac{\log(2n/b)}{2[n/b]}} + 2\delta_b + 2\mathfrak{C}r_bt/r_n.
\]
Now, note that 
\begin{align*}
\mathbb{P}(\mathcal{E}^c) &= \mathbb{P}(r_b|\widehat{\theta}_n - \theta_0| > r_bt/r_n) \le 2\delta_n + \mathbb{P}(|W| > t).
\end{align*}
Therefore, with probability at least $1 - 2\delta_n - (b + 1)/n - \mathbb{P}(|W| > t)$,
\[
|\widehat{\Delta}_n - \Delta| \le \sqrt{\frac{\log(2n)}{2K_n}} + \sqrt{\frac{\log(2n/b)}{2[n/b]}} + 2\delta_b + 2\mathfrak{C}\frac{r_bt}{r_n}.
\]
\section{Proof of Theorem~\ref{thm:unimodal-CI-coverage}}\label{appsec:unimodal-CI-coverage}
Define
\[
\widehat{\theta}_{\max}^{(B)} ~:=~ \max_{1\le j\le B}\widehat{\theta}_j,\quad\mbox{and}\quad \widehat{\theta}_{\min}^{(B)} ~:=~ \min_{1\le j\le B}\widehat{\theta}_j.
\]
Similarly, define $\widehat{\theta}_{\max}^{(B-1)}$ and $\widehat{\theta}_{\min}^{(B-1)}$.
Set
\[
F_{n,j}(u) = \mathbb{P}(r_{n,\alpha}(\widehat{\theta}_j - \theta_0) \le u),\quad\mbox{and}\quad F_W(u) = \mathbb{P}(W \le u).
\]
The assumed hypothesis implies that $|F_{n,j}(u) - F_W(u)| \le \delta_{n,\alpha}$ for all $u$ and all $1\le j\le B$. Finally, define the miscoverage probability
\[
M_B := \mathbb{P}\left(\theta_0 \notin \left[\widehat{\theta}_{\max}^{(B)} - t(\widehat{\theta}_{\max}^{(B)} - \widehat{\theta}_{\min}^{(B)}), \widehat{\theta}_{\max}^{(B)} + t(\widehat{\theta}_{\max}^{(B)} - \widehat{\theta}_{\min}^{(B)})\right]\right).
\]
We will prove that for all $B\ge1$, $t\ge0$, and $\Delta\in[0, 1/2]$,
\begin{equation}\label{eq:required-inequality-miscoverage-unimodal}
\frac{M_B}{Q(B; t, \Delta)} \le \frac{1}{(1 - 10B(1 + t)\delta_{n,\alpha})_+}.
\end{equation}
The same bound holds true for $M_{B-1}/Q(B-1;t, \Delta)$. The definition of $\eta_{\alpha,t}$ implies that $$\eta_{\alpha,t}Q(B_{\alpha,t,\Delta}; t, \Delta) + (1 - \eta_{\alpha,t})Q(B_{\alpha,t,\Delta}-1; t, \Delta) = \alpha.$$ Combining this with the inequalities for $M_B$ and $M_{B-1},$ the result is proved.
Note that
\begin{align*}
M_B &= \mathbb{P}(\theta_0 < \widehat{\theta}_{\min}^{(B)} - t (\widehat{\theta}_{\max}^{(B)} - \widehat{\theta}_{\min}^{(B)})) + \mathbb{P}(\theta_0 > \widehat{\theta}_{\max}^{(B)} + t (\widehat{\theta}_{\max}^{(B)} - \widehat{\theta}_{\min}^{(B)}))\\
&= \mathbb{P}\left(r_{n,\alpha}(\widehat{\theta}_{\min}^{(B)} - \theta_0) > \frac{t}{1+t}  r_{n,\alpha}(\widehat{\theta}_{\max}^{(B)} - \theta_0)\right) + \mathbb{P}\left(r_{n,\alpha}(\widehat{\theta}_{\max}^{(B)} - \theta_0) <  \frac{t}{1+t} r_{n,\alpha}(\widehat{\theta}_{\min}^{(B)} - \theta_0)\right)\\
&= \mathbf{I}_B + \mathbf{II}_B.
\end{align*}
This implies that $M_B$ can be written in terms of the smallest and largest order statistic of $r_{n,\alpha}(\widehat{\theta}_j - \theta_0)$, $1\le j\le B$. Bounding $\mathbf{I}_B$ will also provide a bound for $\mathbf{II}_B$ by taking negative random variables $r_{n,\alpha}(\theta_0 - \widehat{\theta}_j)$. Under the assumption of continuous distribution for $r_{n,\alpha}(\widehat{\theta}_j - \theta_0)$, we get following the proof of~\citet[Theorem 1]{lanke1974interval} that
\begin{equation}\label{eq:exact-expression-I-B}
\mathbf{I}_B = \sum_{j=1}^{B} \int_0^{\infty} \prod_{i\neq j} (F_{n,i}(x) - F_{n,i}(tx/(1 + t)))dF_j(x) .
\end{equation}
Recall that $F_{n,i}(x)$ and $F_W(x)$ are close and satisfy
\[
F_{n,i}(x) - F_{n,i}(tx/(1 + t)) \le F_W(x) - F_W(tx/(1 + t)) + 2\delta_{n,\alpha},
\]
and because the distribution of $W$ is unimodal at $0$, we get 
\[
F_{n,i}(x) - F_{n,i}(tx/(1 + t)) \le \frac{F_W(x) - F_W(0)}{1 + t} + 2\delta_{n,\alpha}.
\]
This follows from the fact that unimodality implies $F_W(\cdot)$ is convex below $0$ and concave above $0$ implying $F(\lambda x) \ge F(0) + \lambda(F(x) - F(0))$ for $\lambda\in[0, 1]$ and $x\ge0$. Finally, using the closeness of $F_{n,j}(\cdot)$ and $F_W(\cdot)$ once again, we conclude
\[
0 \le F_{n,i}(x) - F_{n,i}(tx/(1 + t)) \le \frac{F_{n,j}(x) - F_{n,j}(0)}{1 + t} + 4\delta_{n,\alpha}.
\]
Substituting this inequality in~\eqref{eq:exact-expression-I-B}, we obtain
\begin{align*}
\mathbf{I}_B &\le \sum_{j=1}^B \int_0^{\infty} \left(\frac{F_{n,j}(x) - F_{n,j}(0)}{1 + t} + 4\delta_{n,\alpha}\right)^{B-1}dF_{n,j}(x)\\
&= \frac{1}{(1 + t)^{B-1}}\sum_{j=1}^B\int_0^{\infty} \left(F_{n,j}(x) - F_{n,j}(0) + 4(1 + t)\delta_{n,\alpha}\right)^{B-1}dF_{n,j}(x)\\
&= \frac{1}{(1 + t)^{B-1}}\sum_{j=1}^B\int_{F_{n,j}(0)}^{1} \left(u - F_{n,j}(0) + 4(1 + t)\delta_{n,\alpha}\right)^{B-1}du\\ 
&\le \frac{1}{B(1 + t)^{B-1}}\sum_{j=1}^B \left[\left(1 - F_{n,j}(0) + 4(1 + t)\delta_{n,\alpha}\right)^{B}\right].
\end{align*}
Applying the same calculations with $r_{n,\alpha}(\theta_0 - \widehat{\theta}_j)$ which has the distribution function $G_n(t) = 1 - F_n(-t)$ would yield
\[
\mathbf{II}_B \le \frac{1}{B(1 + t)^{B-1}}\sum_{j=1}^B \left[\left(F_{n,j}(0) + 4(1 + t)\delta_{n,\alpha}\right)^{B}\right].
\]
Therefore,
\begin{equation}\label{eq:almost-final-inequality-M-B}
\begin{split}
M_B &\le \frac{1}{B(1 + t)^{B-1}}\sum_{j=1}^B \left[(1 - F_{n,j}(0) + 4(1 + t)\delta_{n,\alpha})^{B} + (F_{n,j}(0) + 4(1 + t)\delta_{n,\alpha})^{B}\right]\\
&\le \frac{1}{(1 + t)^{B-1}}\left[(1 - F_W(0) + 5(1 + t)\delta_{n,\alpha})^B + (F_W(0) + 5(1 + t)\delta_{n,\alpha})^B\right].
\end{split}
\end{equation}
The second inequality here follows again from the closeness of $F_{n,j}(0)$ and $F_W(0)$.
From the continuous distribution assumption, the asymptotic median bias is given by $\Delta = |1/2 - \mathbb{P}(W \le 0)|$. Hence, it follows that
\begin{equation}\label{eq:final-inequality-M-B}
M_B \le (1 + t)^{-B+1}\left[\left(\frac{1}{2} + 5(1 + t)\delta_{n,\alpha} - \Delta\right)^B + \left(\frac{1}{2} + 5(1 + t)\delta_{n,\alpha} + \Delta\right)^B\right].
\end{equation}
Now consider $M_B/Q(B; t, \Delta)$. 
\[
\frac{M_B}{Q(B; t, \Delta)} ~\le~ \frac{(1 - 2\Delta + 10(1 + t)\delta_{n,\alpha})^B + (1 + 2\Delta + 10(1 + t)\delta_{n,\alpha})^B}{(1 - 2\Delta)^B + (1 + 2\Delta)^B}.
\]
To bound the right hand side, consider the function $g(x) = (x + 1-2\Delta)^B + (x + 1 + 2\Delta)^B$ for $x \ge 0$. It is clear that
\[
0 ~\le~ g(x) - g(0) ~\le~ \int_0^x g'(t)dt ~\le~ Bx\left[(x + 1 - 2\Delta)^{B-1} + (x + 1 + 2\Delta)^{B-1}\right].
\]
Furthermore,
\[
\frac{(x + 1 - 2\Delta)^{B-1} + (x + 1 + 2\Delta)^{B-1}}{(x + 1 - 2\Delta)^{B} + (x + 1 + 2\Delta)^B} \le \frac{1}{x + 1}.
\]
Hence, we conclude that $g(x) \le g(0)/(1 - Bx/(x + 1))_+$ and
\[
\frac{M_B}{Q(B; t, \Delta)} ~\le~ \frac{1}{(1 - 10B(1 + t)\delta_{n,\alpha})_+}.
\]
This completes the proof of~\eqref{eq:required-inequality-miscoverage-unimodal} and implies~\eqref{eq:coverage-mode}.
\end{document}